
\documentclass{amsart}
\usepackage{amsthm,amssymb,amsfonts,amsmath,mathrsfs}
\usepackage{dsfont}
\usepackage{graphicx}
\usepackage{enumerate}
\usepackage{hyperref}
\usepackage[nodvipsnames]{xcolor}
\usepackage{soul}

\setcounter{MaxMatrixCols}{10}

\newcommand{\notaremm}[1]{\textcolor{blue}{\ifmmode\text{\sout{\ensuremath{#1}}}
\else\sout{#1}\fi }}
\newtheorem{theorem}{Theorem}

\newtheorem{axiom}[theorem]{Axiom}

\newtheorem{conjecture}[theorem]{Conjecture}
\newtheorem{corollary}[theorem]{Corollary}

\newtheorem{definition}[theorem]{Definition}
\newtheorem{example}[theorem]{Example}
\newtheorem{exercise}[theorem]{Exercise}
\newtheorem{lemma}[theorem]{Lemma}

\newtheorem{proposition}[theorem]{Proposition}
\newtheorem{remark}[theorem]{Remark}

\subjclass[2010]{Primary 37H99 ; Secondary 37A30, 37C30 }
\keywords{Linear response, random dynamical system, Markov operator, control }

\typeout{TCILATEX Macros for Scientific Word 4.0 <12 Mar 2002>.}
\typeout{NOTICE:  This macro file is NOT proprietary and may be 
freely copied and distributed.}
\makeatletter

%

%
\newcount\@hour\newcount\@minute\chardef\@x10\chardef\@xv60
\def\tcitime{
\def\@time{%
  \@minute\time\@hour\@minute\divide\@hour\@xv
  \ifnum\@hour<\@x 0\fi\the\@hour:%
  \multiply\@hour\@xv\advance\@minute-\@hour
  \ifnum\@minute<\@x 0\fi\the\@minute
  }}%


\def\x@hyperref#1#2#3{%
   \catcode`\~ = 12
   \catcode`\$ = 12
   \catcode`\_ = 12
   \catcode`\# = 12
   \catcode`\& = 12
   \y@hyperref{#1}{#2}{#3}%
}

\def\y@hyperref#1#2#3#4{%
   #2\ref{#4}#3
   \catcode`\~ = 13
   \catcode`\$ = 3
   \catcode`\_ = 8
   \catcode`\# = 6
   \catcode`\& = 4
}

\@ifundefined{hyperref}{\let\hyperref\x@hyperref}{}
\@ifundefined{msihyperref}{\let\msihyperref\x@hyperref}{}

\@ifundefined{qExtProgCall}{\def\qExtProgCall#1#2#3#4#5#6{\relax}}{}
%
%
%
%
\def\QCTOpt[#1]#2{%
  \def\QCTOptB{#1}
  \def\QCTOptA{#2}
}
\def\QCTNOpt#1{%
  \def\QCTOptA{#1}
  \let\QCTOptB\empty
}
\def\Qct{%
  \@ifnextchar[{%
    \QCTOpt}{\QCTNOpt}
}
\def\QCBOpt[#1]#2{%
  \def\QCBOptB{#1}%
  \def\QCBOptA{#2}%
}
\def\QCBNOpt#1{%
  \def\QCBOptA{#1}%
  \let\QCBOptB\empty
}
\def\Qcb{%
  \@ifnextchar[{%
    \QCBOpt}{\QCBNOpt}%
}
\def\PrepCapArgs{%
  \ifx\QCBOptA\empty
    \ifx\QCTOptA\empty
      {}%
    \else
      \ifx\QCTOptB\empty
        {\QCTOptA}%
      \else
        [\QCTOptB]{\QCTOptA}%
      \fi
    \fi
  \else
    \ifx\QCBOptA\empty
      {}%
    \else
      \ifx\QCBOptB\empty
        {\QCBOptA}%
      \else
        [\QCBOptB]{\QCBOptA}%
      \fi
    \fi
  \fi
}
\newcount\GRAPHICSTYPE
\GRAPHICSTYPE=\z@
\def\GRAPHICSPS#1{%
 \ifcase\GRAPHICSTYPE
   \special{ps: #1}%
 \or
   \special{language "PS", include "#1"}%
 \fi
}%
%
%
%

\def\graffile#1#2#3#4{%
    \bgroup
	   \@inlabelfalse
       \leavevmode
       \@ifundefined{bbl@deactivate}{\def~{\string~}}{\activesoff}%
        \raise -#4 \BOXTHEFRAME{%
           \hbox to #2{\raise #3\hbox to #2{\null #1\hfil}}}%
    \egroup
}%
%
\def\draftbox#1#2#3#4{%
 \leavevmode\raise -#4 \hbox{%
  \frame{\rlap{\protect\tiny #1}\hbox to #2%
   {\vrule height#3 width\z@ depth\z@\hfil}%
  }%
 }%
}%
\newcount\@msidraft
\@msidraft=\z@
\let\nographics=\@msidraft
\newif\ifwasdraft
\wasdraftfalse

\def\GRAPHIC#1#2#3#4#5{%
   \ifnum\@msidraft=\@ne\draftbox{#2}{#3}{#4}{#5}%
   \else\graffile{#1}{#3}{#4}{#5}%
   \fi
}
\def\addtoLaTeXparams#1{%
    \edef\LaTeXparams{\LaTeXparams #1}}%
%

\newif\ifBoxFrame \BoxFramefalse
\newif\ifOverFrame \OverFramefalse
\newif\ifUnderFrame \UnderFramefalse

\def\BOXTHEFRAME#1{%
   \hbox{%
      \ifBoxFrame
         \frame{#1}%
      \else
         {#1}%
      \fi
   }%
}

\def\doFRAMEparams#1{\BoxFramefalse\OverFramefalse\UnderFramefalse\readFRAMEparams#1\end}%
\def\readFRAMEparams#1{%
 \ifx#1\end%
  \let\next=\relax
  \else
  \ifx#1i\dispkind=\z@\fi
  \ifx#1d\dispkind=\@ne\fi
  \ifx#1f\dispkind=\tw@\fi
  \ifx#1t\addtoLaTeXparams{t}\fi
  \ifx#1b\addtoLaTeXparams{b}\fi
  \ifx#1p\addtoLaTeXparams{p}\fi
  \ifx#1h\addtoLaTeXparams{h}\fi
  \ifx#1X\BoxFrametrue\fi
  \ifx#1O\OverFrametrue\fi
  \ifx#1U\UnderFrametrue\fi
  \ifx#1w
    \ifnum\@msidraft=1\wasdrafttrue\else\wasdraftfalse\fi
    \@msidraft=\@ne
  \fi
  \let\next=\readFRAMEparams
  \fi
 \next
 }%
%

\def\IFRAME#1#2#3#4#5#6{%
      \bgroup
      \let\QCTOptA\empty
      \let\QCTOptB\empty
      \let\QCBOptA\empty
      \let\QCBOptB\empty
      #6%
      \parindent=0pt
      \leftskip=0pt
      \rightskip=0pt
      \setbox0=\hbox{\QCBOptA}%
      \@tempdima=#1\relax
      \ifOverFrame
          \typeout{This is not implemented yet}%
          \show\HELP
      \else
         \ifdim\wd0>\@tempdima
            \advance\@tempdima by \@tempdima
            \ifdim\wd0 >\@tempdima
               \setbox1 =\vbox{%
                  \unskip\hbox to \@tempdima{\hfill\GRAPHIC{#5}{#4}{#1}{#2}{#3}\hfill}%
                  \unskip\hbox to \@tempdima{\parbox[b]{\@tempdima}{\QCBOptA}}%
               }%
               \wd1=\@tempdima
            \else
               \textwidth=\wd0
               \setbox1 =\vbox{%
                 \noindent\hbox to \wd0{\hfill\GRAPHIC{#5}{#4}{#1}{#2}{#3}\hfill}\\%
                 \noindent\hbox{\QCBOptA}%
               }%
               \wd1=\wd0
            \fi
         \else
            \ifdim\wd0>0pt
              \hsize=\@tempdima
              \setbox1=\vbox{%
                \unskip\GRAPHIC{#5}{#4}{#1}{#2}{0pt}%
                \break
                \unskip\hbox to \@tempdima{\hfill \QCBOptA\hfill}%
              }%
              \wd1=\@tempdima
           \else
              \hsize=\@tempdima
              \setbox1=\vbox{%
                \unskip\GRAPHIC{#5}{#4}{#1}{#2}{0pt}%
              }%
              \wd1=\@tempdima
           \fi
         \fi
         \@tempdimb=\ht1
         \advance\@tempdimb by -#2
         \advance\@tempdimb by #3
         \leavevmode
         \raise -\@tempdimb \hbox{\box1}%
      \fi
      \egroup%
}%
%
\def\DFRAME#1#2#3#4#5{%
  \hfil\break
  \bgroup
     \leftskip\@flushglue
	 \rightskip\@flushglue
	 \parindent\z@
	 \parfillskip\z@skip
     \let\QCTOptA\empty
     \let\QCTOptB\empty
     \let\QCBOptA\empty
     \let\QCBOptB\empty
	 \vbox\bgroup
        \ifOverFrame 
           #5\QCTOptA\par
        \fi
        \GRAPHIC{#4}{#3}{#1}{#2}{\z@}%
        \ifUnderFrame 
           \break#5\QCBOptA
        \fi
	 \egroup
   \egroup
   \break
}%
%
\def\FFRAME#1#2#3#4#5#6#7{%
  \@ifundefined{floatstyle}
    {
     \begin{figure}[#1]%
    }
    {
	 \ifx#1h
      \begin{figure}[H]%
	 \else
      \begin{figure}[#1]%
	 \fi
	}
  \let\QCTOptA\empty
  \let\QCTOptB\empty
  \let\QCBOptA\empty
  \let\QCBOptB\empty
  \ifOverFrame
    #4
    \ifx\QCTOptA\empty
    \else
      \ifx\QCTOptB\empty
        \caption{\QCTOptA}%
      \else
        \caption[\QCTOptB]{\QCTOptA}%
      \fi
    \fi
    \ifUnderFrame\else
      \label{#5}%
    \fi
  \else
    \UnderFrametrue%
  \fi
  \begin{center}\GRAPHIC{#7}{#6}{#2}{#3}{\z@}\end{center}%
  \ifUnderFrame
    #4
    \ifx\QCBOptA\empty
      \caption{}%
    \else
      \ifx\QCBOptB\empty
        \caption{\QCBOptA}%
      \else
        \caption[\QCBOptB]{\QCBOptA}%
      \fi
    \fi
    \label{#5}%
  \fi
  \end{figure}%
 }%
%
%
%
%
%
\newcount\dispkind%

\def\makeactives{
  \catcode`\"=\active
  \catcode`\;=\active
  \catcode`\:=\active
  \catcode`\'=\active
  \catcode`\~=\active
}
\bgroup
   \makeactives
   \gdef\activesoff{%
      \def"{\string"}%
      \def;{\string;}%
      \def:{\string:}%
      \def'{\string'}%
      \def~{\string~}%
    }
\egroup

\def\FRAME#1#2#3#4#5#6#7#8{%
 \bgroup
 \ifnum\@msidraft=\@ne
   \wasdrafttrue
 \else
   \wasdraftfalse%
 \fi
 \def\LaTeXparams{}%
 \dispkind=\z@
 \def\LaTeXparams{}%
 \doFRAMEparams{#1}%
 \ifnum\dispkind=\z@\IFRAME{#2}{#3}{#4}{#7}{#8}{#5}\else
  \ifnum\dispkind=\@ne\DFRAME{#2}{#3}{#7}{#8}{#5}\else
   \ifnum\dispkind=\tw@
    \edef\@tempa{\noexpand\FFRAME{\LaTeXparams}}%
    \@tempa{#2}{#3}{#5}{#6}{#7}{#8}%
    \fi
   \fi
  \fi
  \ifwasdraft\@msidraft=1\else\@msidraft=0\fi{}%
  \egroup
 }%
%

\def\TEXUX#1{"texux"}

%
%
%
%
%
%
%
%
%
%

%
\long\def\QQQ#1#2{%
     \long\expandafter\def\csname#1\endcsname{#2}}%
\@ifundefined{QTP}{\def\QTP#1{}}{}
\@ifundefined{QEXCLUDE}{\def\QEXCLUDE#1{}}{}
\@ifundefined{Qlb}{}{}
\@ifundefined{Qlt}{}{}
\long\def\QQA#1#2{}%
\def\QTR#1#2{{\csname#1\endcsname {#2}}}%
\def\EXPAND#1[#2]#3{}%
\def\NOEXPAND#1[#2]#3{}%
\def\LaTeXparent#1{}%
\def\ChildStyles#1{}%
\def\ChildDefaults#1{}%
\def\QTagDef#1#2#3{}%

\@ifundefined{correctchoice}{}{}
\@ifundefined{HTML}{\def\HTML#1{\relax}}{}
\@ifundefined{TCIIcon}{\def\TCIIcon#1#2#3#4{\relax}}{}
\if@compatibility
  \typeout{Not defining UNICODE  U or CustomNote commands for LaTeX 2.09.}
\else
  \providecommand{\UNICODE}[2][]{\protect\rule{.1in}{.1in}}
  \providecommand{\U}[1]{\protect\rule{.1in}{.1in}}
  
\fi

\@ifundefined{lambdabar}{
      
   }{}

%
\@ifundefined{StyleEditBeginDoc}{}{}
%
\def\QQfnmark#1{\footnotemark}

%
%
\@ifundefined{TCIMAKEINDEX}{}{\makeindex}%
%
\@ifundefined{abstract}{%
 \def\abstract{%
  \if@twocolumn
   \section*{Abstract (Not appropriate in this style!)}%
   \else \small 
   \begin{center}{\bf Abstract\vspace{-.5em}\vspace{\z@}}\end{center}%
   \quotation 
   \fi
  }%
 }{%
 }%
\@ifundefined{endabstract}{\def\endabstract
  {\if@twocolumn\else\endquotation\fi}}{}%
\@ifundefined{maketitle}{\def\maketitle#1{}}{}%
\@ifundefined{affiliation}{\def\affiliation#1{}}{}%
\@ifundefined{proof}{}{}%
\@ifundefined{endproof}{}{}%
\@ifundefined{newfield}{\def\newfield#1#2{}}{}%
\@ifundefined{chapter}{\def\chapter#1{\par(Chapter head:)#1\par }%
 \newcount\c@chapter}{}%
\@ifundefined{part}{\def\part#1{\par(Part head:)#1\par }}{}%
\@ifundefined{section}{\def\section#1{\par(Section head:)#1\par }}{}%
\@ifundefined{subsection}{\def\subsection#1%
 {\par(Subsection head:)#1\par }}{}%
\@ifundefined{subsubsection}{\def\subsubsection#1%
 {\par(Subsubsection head:)#1\par }}{}%
\@ifundefined{paragraph}{\def\paragraph#1%
 {\par(Subsubsubsection head:)#1\par }}{}%
\@ifundefined{subparagraph}{\def\subparagraph#1%
 {\par(Subsubsubsubsection head:)#1\par }}{}%
\@ifundefined{therefore}{}{}%
\@ifundefined{backepsilon}{}{}%
\@ifundefined{yen}{}{}%
\@ifundefined{registered}{%
   \def\registered{\relax\ifmmode{}\r@gistered
                    \else$\m@th\r@gistered$\fi}%
 \def\r@gistered{^{\ooalign
  {\hfil\raise.07ex\hbox{$\scriptstyle\rm\text{R}$}\hfil\crcr
  \mathhexbox20D}}}}{}%
\@ifundefined{Eth}{}{}%
\@ifundefined{eth}{}{}%
\@ifundefined{Thorn}{}{}%
\@ifundefined{thorn}{}{}%
%
\@ifundefined{degree}{}{}%
%
\newdimen\theight
\@ifundefined{Column}{\def\Column{%
 \vadjust{\setbox\z@=\hbox{\scriptsize\quad\quad tcol}%
  \theight=\ht\z@\advance\theight by \dp\z@\advance\theight by \lineskip
  \kern -\theight \vbox to \theight{%
   \rightline{\rlap{\box\z@}}%
   \vss
   }%
  }%
 }}{}%
\@ifundefined{qed}{\def\qed{%
 \ifhmode\unskip\nobreak\fi\ifmmode\ifinner\else\hskip5\p@\fi\fi
 \hbox{\hskip5\p@\vrule width4\p@ height6\p@ depth1.5\p@\hskip\p@}%
 }}{}%
\@ifundefined{cents}{}{}%
\@ifundefined{tciLaplace}{}{}%
\@ifundefined{tciFourier}{}{}%
\@ifundefined{textcurrency}{}{}%
\@ifundefined{texteuro}{}{}%
\@ifundefined{textfranc}{}{}%
\@ifundefined{textlira}{}{}%
\@ifundefined{textpeseta}{}{}%
\@ifundefined{miss}{\def\miss{\hbox{\vrule height2\p@ width 2\p@ depth\z@}}}{}%
\@ifundefined{vvert}{}{}
\@ifundefined{tcol}{\def\tcol#1{{\baselineskip=6\p@ \vcenter{#1}} \Column}}{}%
\@ifundefined{dB}{}{}
\@ifundefined{mB}{}{}
\@ifundefined{nB}{}{}
\@ifundefined{note}{}{}%
\def\newfmtname{LaTeX2e}
%
\ifx\fmtname\newfmtname
  \DeclareOldFontCommand{\rm}{\normalfont\rmfamily}{\mathrm}
  \DeclareOldFontCommand{\sf}{\normalfont\sffamily}{\mathsf}
  \DeclareOldFontCommand{\tt}{\normalfont\ttfamily}{\mathtt}
  \DeclareOldFontCommand{\bf}{\normalfont\bfseries}{\mathbf}
  \DeclareOldFontCommand{\it}{\normalfont\itshape}{\mathit}
  \DeclareOldFontCommand{\sl}{\normalfont\slshape}{\@nomath\sl}
  \DeclareOldFontCommand{\sc}{\normalfont\scshape}{\@nomath\sc}
\fi

%

\def\alpha{{\Greekmath 010B}}%
\def\beta{{\Greekmath 010C}}%
\def\gamma{{\Greekmath 010D}}%
\def\delta{{\Greekmath 010E}}%
\def\epsilon{{\Greekmath 010F}}%
\def\zeta{{\Greekmath 0110}}%
\def\eta{{\Greekmath 0111}}%
\def\theta{{\Greekmath 0112}}%
\def\iota{{\Greekmath 0113}}%
\def\kappa{{\Greekmath 0114}}%
\def\lambda{{\Greekmath 0115}}%
\def\mu{{\Greekmath 0116}}%
\def\nu{{\Greekmath 0117}}%
\def\xi{{\Greekmath 0118}}%
\def\pi{{\Greekmath 0119}}%
\def\rho{{\Greekmath 011A}}%
\def\sigma{{\Greekmath 011B}}%
\def\tau{{\Greekmath 011C}}%
\def\upsilon{{\Greekmath 011D}}%
\def\phi{{\Greekmath 011E}}%
\def\chi{{\Greekmath 011F}}%
\def\psi{{\Greekmath 0120}}%
\def\omega{{\Greekmath 0121}}%
\def\varepsilon{{\Greekmath 0122}}%
\def\vartheta{{\Greekmath 0123}}%
\def\varpi{{\Greekmath 0124}}%
\def\varrho{{\Greekmath 0125}}%
\def\varsigma{{\Greekmath 0126}}%
\def\varphi{{\Greekmath 0127}}%

\def\nabla{{\Greekmath 0272}}
\def\FindBoldGroup{%
   {\setbox0=\hbox{$\mathbf{x\global\edef\theboldgroup{\the\mathgroup}}$}}%
}

\def\Greekmath#1#2#3#4{%
    \if@compatibility
        \ifnum\mathgroup=\symbold
           \mathchoice{\mbox{\boldmath$\displaystyle\mathchar"#1#2#3#4$}}%
                      {\mbox{\boldmath$\textstyle\mathchar"#1#2#3#4$}}%
                      {\mbox{\boldmath$\scriptstyle\mathchar"#1#2#3#4$}}%
                      {\mbox{\boldmath$\scriptscriptstyle\mathchar"#1#2#3#4$}}%
        \else
           \mathchar"#1#2#3#4%
        \fi 
    \else 
        \FindBoldGroup
        \ifnum\mathgroup=\theboldgroup 
           \mathchoice{\mbox{\boldmath$\displaystyle\mathchar"#1#2#3#4$}}%
                      {\mbox{\boldmath$\textstyle\mathchar"#1#2#3#4$}}%
                      {\mbox{\boldmath$\scriptstyle\mathchar"#1#2#3#4$}}%
                      {\mbox{\boldmath$\scriptscriptstyle\mathchar"#1#2#3#4$}}%
        \else
           \mathchar"#1#2#3#4%
        \fi     	    
	  \fi}

\newif\ifGreekBold  \GreekBoldfalse
\let\SAVEPBF=\pbf
\def\pbf{\GreekBoldtrue\SAVEPBF}%

\@ifundefined{theorem}{\newtheorem{theorem}{Theorem}}{}
\@ifundefined{lemma}{\newtheorem{lemma}[theorem]{Lemma}}{}
\@ifundefined{corollary}{\newtheorem{corollary}[theorem]{Corollary}}{}
\@ifundefined{conjecture}{}{}
\@ifundefined{proposition}{\newtheorem{proposition}[theorem]{Proposition}}{}
\@ifundefined{axiom}{}{}
\@ifundefined{remark}{\newtheorem{remark}{Remark}}{}
\@ifundefined{example}{\newtheorem{example}{Example}}{}
\@ifundefined{exercise}{}{}
\@ifundefined{definition}{\newtheorem{definition}{Definition}}{}

\@ifundefined{mathletters}{%
  \newcounter{equationnumber}  
  \def\mathletters{%
     \addtocounter{equation}{1}
     \edef\@currentlabel{\theequation}%
     \setcounter{equationnumber}{\c@equation}
     \setcounter{equation}{0}%
     \edef\theequation{\@currentlabel\noexpand\alph{equation}}%
  }
  
}{}

\@ifundefined{BibTeX}{%
    \def\BibTeX{{\rm B\kern-.05em{\sc i\kern-.025em b}\kern-.08em
                 T\kern-.1667em\lower.7ex\hbox{E}\kern-.125emX}}}{}%
\@ifundefined{AmS}%
    {\def\AmS{{\protect\usefont{OMS}{cmsy}{m}{n}%
                A\kern-.1667em\lower.5ex\hbox{M}\kern-.125emS}}}{}%
\@ifundefined{AmSTeX}{}{}%
%

\def\@@eqncr{\let\@tempa\relax
    \ifcase\@eqcnt \def\@tempa{& & &}\or \def\@tempa{& &}%
      \else \def\@tempa{&}\fi
     \@tempa
     \if@eqnsw
        \iftag@
           \@taggnum
        \else
           \@eqnnum\stepcounter{equation}%
        \fi
     \fi
     \global\tag@false
     \global\@eqnswtrue
     \global\@eqcnt\z@\cr}

\def\TCItag{\@ifnextchar*{\@TCItagstar}{\@TCItag}}
\def\@TCItag#1{%
    \global\tag@true
    \global\def\@taggnum{(#1)}}
\def\@TCItagstar*#1{%
    \global\tag@true
    \global\def\@taggnum{#1}}
%
%
%
%
%
%
%
%
%
%
%
%
%
%
%
%
%
%
%
%
%
%
%
%
%
%
%
%
%
%
%
%
%
%
%
%
%
%
%
%
%
%
%
%
%
%
%
%
%
%
%
%
%
%
%
%
%
%

\if@compatibility\else
  \RequirePackage{amsmath}
  \makeatother
   
\fi

\typeout{TCILATEX defining AMS-like constructs in LaTeX 2.09 COMPATIBILITY MODE}
\def\ExitTCILatex{\makeatother }

\bgroup
\ifx\ds@amstex\relax
   \message{amstex already loaded}\aftergroup\ExitTCILatex
\else
   \@ifpackageloaded{amsmath}%
      {\message{amsmath already loaded}\aftergroup\ExitTCILatex}
      {}
   \@ifpackageloaded{amstex}%
      {\message{amstex already loaded}\aftergroup\ExitTCILatex}
      {}
   \@ifpackageloaded{amsgen}%
      {\message{amsgen already loaded}\aftergroup\ExitTCILatex}
      {}
\fi
\egroup

%
%
\let\DOTSI\relax
\def\RIfM@{\relax\ifmmode}%
\def\FN@{\futurelet\next}%
\newcount\intno@
\def\iint{\DOTSI\intno@\tw@\FN@\ints@}%
\def\iiint{\DOTSI\intno@\thr@@\FN@\ints@}%
\def\iiiint{\DOTSI\intno@4 \FN@\ints@}%
\def\idotsint{\DOTSI\intno@\z@\FN@\ints@}%
\def\ints@{\findlimits@\ints@@}%
\newif\iflimtoken@
\newif\iflimits@
\def\findlimits@{\limtoken@true\ifx\next\limits\limits@true
 \else\ifx\next\nolimits\limits@false\else
 \limtoken@false\ifx\ilimits@\nolimits\limits@false\else
 \ifinner\limits@false\else\limits@true\fi\fi\fi\fi}%
\def\multint@{\int\ifnum\intno@=\z@\intdots@                          
 \else\intkern@\fi                                                    
 \ifnum\intno@>\tw@\int\intkern@\fi                                   
 \ifnum\intno@>\thr@@\int\intkern@\fi                                 
 \int}
\def\multintlimits@{\intop\ifnum\intno@=\z@\intdots@\else\intkern@\fi
 \ifnum\intno@>\tw@\intop\intkern@\fi
 \ifnum\intno@>\thr@@\intop\intkern@\fi\intop}%
\def\intic@{%
    \mathchoice{\hskip.5em}{\hskip.4em}{\hskip.4em}{\hskip.4em}}%
\def\negintic@{\mathchoice
 {\hskip-.5em}{\hskip-.4em}{\hskip-.4em}{\hskip-.4em}}%
\def\ints@@{\iflimtoken@                                              
 \def\ints@@@{\iflimits@\negintic@
   \mathop{\intic@\multintlimits@}\limits                             
  \else\multint@\nolimits\fi                                          
  \eat@}
 \else                                                                
 \def\ints@@@{\iflimits@\negintic@
  \mathop{\intic@\multintlimits@}\limits\else
  \multint@\nolimits\fi}\fi\ints@@@}%
\def\intkern@{\mathchoice{\!\!\!}{\!\!}{\!\!}{\!\!}}%
\def\plaincdots@{\mathinner{\cdotp\cdotp\cdotp}}%
\def\intdots@{\mathchoice{\plaincdots@}%
 {{\cdotp}\mkern1.5mu{\cdotp}\mkern1.5mu{\cdotp}}%
 {{\cdotp}\mkern1mu{\cdotp}\mkern1mu{\cdotp}}%
 {{\cdotp}\mkern1mu{\cdotp}\mkern1mu{\cdotp}}}%
%
%
%
\def\RIfM@{\relax\protect\ifmmode}
\def\text{\RIfM@\expandafter\text@\else\expandafter\mbox\fi}
\let\nfss@text\text
\def\text@#1{\mathchoice
   {\textdef@\displaystyle\f@size{#1}}%
   {\textdef@\textstyle\tf@size{\firstchoice@false #1}}%
   {\textdef@\textstyle\sf@size{\firstchoice@false #1}}%
   {\textdef@\textstyle \ssf@size{\firstchoice@false #1}}%
   \glb@settings}

\def\textdef@#1#2#3{\hbox{{%
                    \everymath{#1}%
                    \let\f@size#2\selectfont
                    #3}}}
\newif\iffirstchoice@
\firstchoice@true
%
%
\def\Let@{\relax\iffalse{\fi\let\\=\cr\iffalse}\fi}%
\def\vspace@{\def\vspace##1{\crcr\noalign{\vskip##1\relax}}}%
\def\multilimits@{\bgroup\vspace@\Let@
 \baselineskip\fontdimen10 \scriptfont\tw@
 \advance\baselineskip\fontdimen12 \scriptfont\tw@
 \lineskip\thr@@\fontdimen8 \scriptfont\thr@@
 \lineskiplimit\lineskip
 \vbox\bgroup\ialign\bgroup\hfil$\m@th\scriptstyle{##}$\hfil\crcr}%
\def\Sb{_\multilimits@}%
\def\endSb{\crcr\egroup\egroup\egroup}%
\def\Sp{^\multilimits@}%

%
%
%
\newdimen\ex@
\ex@.2326ex
\def\rightarrowfill@#1{$#1\m@th\mathord-\mkern-6mu\cleaders
 \hbox{$#1\mkern-2mu\mathord-\mkern-2mu$}\hfill
 \mkern-6mu\mathord\rightarrow$}%
\def\leftarrowfill@#1{$#1\m@th\mathord\leftarrow\mkern-6mu\cleaders
 \hbox{$#1\mkern-2mu\mathord-\mkern-2mu$}\hfill\mkern-6mu\mathord-$}%
\def\leftrightarrowfill@#1{$#1\m@th\mathord\leftarrow
\mkern-6mu\cleaders
 \hbox{$#1\mkern-2mu\mathord-\mkern-2mu$}\hfill
 \mkern-6mu\mathord\rightarrow$}%
\def\overrightarrow{\mathpalette\overrightarrow@}%
\def\overrightarrow@#1#2{\vbox{\ialign{##\crcr\rightarrowfill@#1\crcr
 \noalign{\kern-\ex@\nointerlineskip}$\m@th\hfil#1#2\hfil$\crcr}}}%

\def\overleftarrow{\mathpalette\overleftarrow@}%
\def\overleftarrow@#1#2{\vbox{\ialign{##\crcr\leftarrowfill@#1\crcr
 \noalign{\kern-\ex@\nointerlineskip}$\m@th\hfil#1#2\hfil$\crcr}}}%
\def\overleftrightarrow{\mathpalette\overleftrightarrow@}%
\def\overleftrightarrow@#1#2{\vbox{\ialign{##\crcr
   \leftrightarrowfill@#1\crcr
 \noalign{\kern-\ex@\nointerlineskip}$\m@th\hfil#1#2\hfil$\crcr}}}%
\def\underrightarrow{\mathpalette\underrightarrow@}%
\def\underrightarrow@#1#2{\vtop{\ialign{##\crcr$\m@th\hfil#1#2\hfil
  $\crcr\noalign{\nointerlineskip}\rightarrowfill@#1\crcr}}}%

\def\underleftarrow{\mathpalette\underleftarrow@}%
\def\underleftarrow@#1#2{\vtop{\ialign{##\crcr$\m@th\hfil#1#2\hfil
  $\crcr\noalign{\nointerlineskip}\leftarrowfill@#1\crcr}}}%
\def\underleftrightarrow{\mathpalette\underleftrightarrow@}%
\def\underleftrightarrow@#1#2{\vtop{\ialign{##\crcr$\m@th
  \hfil#1#2\hfil$\crcr
 \noalign{\nointerlineskip}\leftrightarrowfill@#1\crcr}}}%

\def\qopnamewl@#1{\mathop{\operator@font#1}\nlimits@}
\let\nlimits@\displaylimits
\def\setboxz@h{\setbox\z@\hbox}

\def\varlim@#1#2{\mathop{\vtop{\ialign{##\crcr
 \hfil$#1\m@th\operator@font lim$\hfil\crcr
 \noalign{\nointerlineskip}#2#1\crcr
 \noalign{\nointerlineskip\kern-\ex@}\crcr}}}}

 \def\rightarrowfill@#1{\m@th\setboxz@h{$#1-$}\ht\z@\z@
  $#1\copy\z@\mkern-6mu\cleaders
  \hbox{$#1\mkern-2mu\box\z@\mkern-2mu$}\hfill
  \mkern-6mu\mathord\rightarrow$}
\def\leftarrowfill@#1{\m@th\setboxz@h{$#1-$}\ht\z@\z@
  $#1\mathord\leftarrow\mkern-6mu\cleaders
  \hbox{$#1\mkern-2mu\copy\z@\mkern-2mu$}\hfill
  \mkern-6mu\box\z@$}

\def\projlim{\qopnamewl@{proj\,lim}}
\def\injlim{\qopnamewl@{inj\,lim}}
\def\varinjlim{\mathpalette\varlim@\rightarrowfill@}
\def\varprojlim{\mathpalette\varlim@\leftarrowfill@}
\def\varliminf{\mathpalette\varliminf@{}}
\def\varliminf@#1{\mathop{\underline{\vrule\@depth.2\ex@\@width\z@
   \hbox{$#1\m@th\operator@font lim$}}}}
\def\varlimsup{\mathpalette\varlimsup@{}}
\def\varlimsup@#1{\mathop{\overline
  {\hbox{$#1\m@th\operator@font lim$}}}}

%
%
%
%
%
%
\begingroup \catcode `|=0 \catcode `[= 1
\catcode`]=2 \catcode `\{=12 \catcode `\}=12
\catcode`\\=12 
|gdef|@alignverbatim#1\end{align}[#1|end[align]]
|gdef|@salignverbatim#1\end{align*}[#1|end[align*]]

|gdef|@alignatverbatim#1\end{alignat}[#1|end[alignat]]
|gdef|@salignatverbatim#1\end{alignat*}[#1|end[alignat*]]

|gdef|@xalignatverbatim#1\end{xalignat}[#1|end[xalignat]]
|gdef|@sxalignatverbatim#1\end{xalignat*}[#1|end[xalignat*]]

|gdef|@gatherverbatim#1\end{gather}[#1|end[gather]]
|gdef|@sgatherverbatim#1\end{gather*}[#1|end[gather*]]

|gdef|@gatherverbatim#1\end{gather}[#1|end[gather]]
|gdef|@sgatherverbatim#1\end{gather*}[#1|end[gather*]]

|gdef|@multilineverbatim#1\end{multiline}[#1|end[multiline]]
|gdef|@smultilineverbatim#1\end{multiline*}[#1|end[multiline*]]

|gdef|@arraxverbatim#1\end{arrax}[#1|end[arrax]]
|gdef|@sarraxverbatim#1\end{arrax*}[#1|end[arrax*]]

|gdef|@tabulaxverbatim#1\end{tabulax}[#1|end[tabulax]]
|gdef|@stabulaxverbatim#1\end{tabulax*}[#1|end[tabulax*]]

|endgroup

\def\align{\@verbatim \frenchspacing\@vobeyspaces \@alignverbatim
You are using the "align" environment in a style in which it is not defined.}

\@namedef{align*}{\@verbatim\@salignverbatim
You are using the "align*" environment in a style in which it is not defined.}
\expandafter\let\csname endalign*\endcsname =\endtrivlist

\def\alignat{\@verbatim \frenchspacing\@vobeyspaces \@alignatverbatim
You are using the "alignat" environment in a style in which it is not defined.}

\@namedef{alignat*}{\@verbatim\@salignatverbatim
You are using the "alignat*" environment in a style in which it is not defined.}
\expandafter\let\csname endalignat*\endcsname =\endtrivlist

\def\xalignat{\@verbatim \frenchspacing\@vobeyspaces \@xalignatverbatim
You are using the "xalignat" environment in a style in which it is not defined.}

\@namedef{xalignat*}{\@verbatim\@sxalignatverbatim
You are using the "xalignat*" environment in a style in which it is not defined.}
\expandafter\let\csname endxalignat*\endcsname =\endtrivlist

\def\gather{\@verbatim \frenchspacing\@vobeyspaces \@gatherverbatim
You are using the "gather" environment in a style in which it is not defined.}

\@namedef{gather*}{\@verbatim\@sgatherverbatim
You are using the "gather*" environment in a style in which it is not defined.}
\expandafter\let\csname endgather*\endcsname =\endtrivlist

\def\multiline{\@verbatim \frenchspacing\@vobeyspaces \@multilineverbatim
You are using the "multiline" environment in a style in which it is not defined.}

\@namedef{multiline*}{\@verbatim\@smultilineverbatim
You are using the "multiline*" environment in a style in which it is not defined.}
\expandafter\let\csname endmultiline*\endcsname =\endtrivlist

\def\arrax{\@verbatim \frenchspacing\@vobeyspaces \@arraxverbatim
You are using a type of "array" construct that is only allowed in AmS-LaTeX.}

\def\tabulax{\@verbatim \frenchspacing\@vobeyspaces \@tabulaxverbatim
You are using a type of "tabular" construct that is only allowed in AmS-LaTeX.}

\@namedef{arrax*}{\@verbatim\@sarraxverbatim
You are using a type of "array*" construct that is only allowed in AmS-LaTeX.}
\expandafter\let\csname endarrax*\endcsname =\endtrivlist

\@namedef{tabulax*}{\@verbatim\@stabulaxverbatim
You are using a type of "tabular*" construct that is only allowed in AmS-LaTeX.}
\expandafter\let\csname endtabulax*\endcsname =\endtrivlist


 \def\endequation{%
     \ifmmode\ifinner 
      \iftag@
        \addtocounter{equation}{-1} 
        $\hfil
           \displaywidth\linewidth\@taggnum\egroup \endtrivlist
        \global\tag@false
        \global\@ignoretrue   
      \else
        $\hfil
           \displaywidth\linewidth\@eqnnum\egroup \endtrivlist
        \global\tag@false
        \global\@ignoretrue 
      \fi
     \else   
      \iftag@
        \addtocounter{equation}{-1} 
        \eqno \hbox{\@taggnum}
        \global\tag@false%
        $$\global\@ignoretrue
      \else
        \eqno \hbox{\@eqnnum}
        $$\global\@ignoretrue
      \fi
     \fi\fi
 } 

 \newif\iftag@ \tag@false
 
 \def\TCItag{\@ifnextchar*{\@TCItagstar}{\@TCItag}}
 \def\@TCItag#1{%
     \global\tag@true
     \global\def\@taggnum{(#1)}}
 \def\@TCItagstar*#1{%
     \global\tag@true
     \global\def\@taggnum{#1}}

  \@ifundefined{tag}{
     \def\tag{\@ifnextchar*{\@tagstar}{\@tag}}
     \def\@tag#1{%
         \global\tag@true
         \global\def\@taggnum{(#1)}}
     \def\@tagstar*#1{%
         \global\tag@true
         \global\def\@taggnum{#1}}
  }{}

%
%
%
%
%

\makeatother

\begin{document}
\title[Linear response for systems with noise]{Linear Response for dynamical
systems with additive noise}
\author{S. Galatolo}
\address{Dipartimento di Matematica, Universit\`a di Pisa, Largo Bruno
Pontecorvo 5, 56127 Pisa, Italy}
\email{stefano.galatolo@unipi.it}
\urladdr{http://pagine.dm.unipi.it/~a080288/}
\author{P. Giulietti}
\address{Centro di Ricerca Matematica Ennio De Giorgi, Scuola Normale
Superiore, Piazza dei Cavalieri 7, 56126 Pisa, Italy}
\email{paolo.giulietti@sns.it}
\date{\today }

\begin{abstract}
We show a linear response statement for fixed points of a family of Markov
operators which are perturbations of mixing and regularizing operators. We
apply the statement to random dynamical systems on the interval given by a
deterministic map $T$ with additive noise (distributed according to a
bounded variation kernel). We prove linear response for these systems, also
providing explicit formulas both for deterministic perturbations of the map $%
T$ and for changes in the noise kernel. The response holds with mild
assumptions on the system, allowing the map $T$ to have critical points,
contracting and expanding regions. We apply our theory to topological mixing
maps with additive noise, to a model of the Belozuv-Zhabotinsky chemical
reaction and to random rotations. In the final part of the paper we discuss
the linear request problem for these kind of systems, determining which
perturbations of $T$ produce a prescribed response.
\end{abstract}

\maketitle
\tableofcontents

\section{Introduction}

It is of major interest, both in pure mathematics and in applications, to
understand how the statistical properties of a physical system change when
it suffers from perturbations.

Sometimes small changes in the system produce small changes in its
statistical behavior, and the system is said to be statistically stable,
sometimes small changes lead to catastrophic events. {Beyond} the
qualitative statistical stability, {sometimes} it is {possible} to get a
quantitative understanding of the response of the system to small
perturbations, both in magnitude and direction. {This understanding can be
achieved in many cases of systems having \emph{linear response} to
perturbations.} As a matter of fact, the linear response of the system with
respect to a perturbation can be described by a suitable derivative, \emph{%
representing the rate of change of the relevant (physical, stationary)
invariant measure of the system with respect to the perturbation}. {Hence,}
the response describes the first order change of the equilibrium state
allowing us to get information about its robustness or sensitivity to change
in its parameters.

A concrete application of the above ideas can be found in modern geophysics:
if a climate model relies on a parameter which corresponds to external
forcing, it is relevant to study the stability of equilibrium states and
their evolution, informally ``the directions of change'' with respect to
such forcing. This includes information on the behavior of macroscopic
objects such as persistent atmospheric or oceanic currents, large vortices,
gyre and streams (see \cite{Mj,Lu} for more details or \cite{CGT} for an
inspirational review and further references). Furthermore, when considering
the management of a chaotic or complex system one is led to consider an
inverse problem related to the linear response: is it possible to realize a
specific change of the equilibrium state by controlling the perturbation? Is
there an \textquotedblleft optimal\textquotedblright\ way of doing so? (See 
\cite{GP, Kl16, ADF, Mac} and Section \ref{sec:control} for a more detailed
introduction to this control problem). We will investigate this family of
questions specifically in the case of a random dynamical system given by a
deterministic map with additive noise.

{While the analysis of the linear response of a system may provide a lot of
information, it is a fact that not all systems are well behaved with respect
to perturbations. In particular} not all dynamical systems are even
``statistically stable'' \footnote{%
Informally speaking, a system is said to be statistically stable under a
certain kind of perturbation if the invariant measure of interest varies
continuously under the perturbation.}. The identity map is an example of a
system which is not stable under any reasonable formal definition of
statistical stability. {On the other hand}, many systems of applied and
theoretical interest are stable. The literature about qualitative
statistical stability in dynamical systems is vast, see the introduction of 
\cite{AV02} for an historical account on the notion of statistical
stability, see \cite{ACF10, ASsu} and references therein for more recent
results. Quantitative estimates show the existence of systems which are
statistically stable but have a response to perturbation of orders of
magnitude larger than the perturbation {itself}. The stationary measure may
vary just with H\"older modulus of continuity, or even worse, see e.g. \cite%
{BBS, D2, Gmann, Gpre, zz,GLMK,KLS15}. %

It is worth to remark that a general relation holds, between this modulus of
continuity and the speed of convergence to equilibrium of the system (see 
\cite{Gpre,Gmann} see also \cite{Mit} for earlier results limited to Markov
chains).

The results discussed so far mostly regard the stability of deterministic
systems under deterministic perturbation. In fact, the statistical stability
of deterministic systems under \textit{small} random perturbations is called 
\textit{stochastic stability} and it has been studied quite extensively.
Early results on piecewise monotonic transformation with some regularity
assumption on both the transformation and the invariant measures were
already obtained by Keller \cite{Ke82}. The book \cite{Ki88} provides an
excellent starting point for the study of the subject.

Concerning systems having Linear Response, several results have been proved
for deterministic perturbations of deterministic dynamical systems. Starting
with Ruelle, it is known that in smooth, uniformly hyperbolic systems the
physical invariant measure changes smoothly and a formula for such
differentiation can be obtained (see \cite{R}). Similar results can be
proved in some non uniformly expanding or hyperbolic cases (see e.g. \cite%
{Ba1, BaSma,BKL, D, BahSau,BT,K,zz}). On the other hand, it is known that
the linear response does not always hold, due to the lack of regularity of
the system, of the perturbation or the lack of sufficient hyperbolicity (see 
\cite{Ba1, BBS, zz, Gpre}). An example of linear response for small random
perturbations of deterministic systems (which uses ideas similar to what we
will implement here) is produced in \cite{Li2}. The survey \cite{BB} has
exhaustive list of classical references on the subject at hand. The reader
interested in seeing linear response from a point of view closer to physics
can consult \cite{Luu, A}. Rigorous numerical approaches for the computation
of the linear response are available, to some extent, both for deterministic
and random systems (see \cite{BGNN, PV}).

About linear response of random dynamical systems in general, less is known. 
{As it is common in literature, by random system we mean a system defined by
a random dynamics which might, or might not, involve a deterministic part
e.g. a dynamics defined only by a transition operator acting on probability
distributions versus a random choice between deterministic maps. To study
the response of such systems we may perturb any of its defining components }
and look at the change in the resulting statistical properties. Results for
this kind of systems were proved in \cite{HM}, {where} {the} technical
framework {was} adapted to stochastic differential equations. {Moreover,}
there is a recent work \cite{BRS} proving linear response for a class of
random uniformly and non uniformly expanding systems. See Remark \ref%
{rem:comparison} for a detailed comparison between the present work, and
these two closely related results.

In the random case, like in the deterministic case, a fruitful strategy to
study the stability of a system, relies on noticing that the stationary
measures of interest are fixed points of the transfer operators associated
to the system we consider; thus, linear response statements or quantitative
stability results can be proved by first proving perturbation theorems for
suitable operators as done in \cite{HM, notes, JS, Gpre, KL, Li2}.

In this paper we apply this general strategy to an important class of random
dynamical systems. We prove a general fixed point stability result and
linear response for a class of Markov operators satisfying mild assumptions
adapted to random dynamical systems where the unperturbed operators have both%
\emph{\ mixing and regularizing} properties. We then apply this statement to 
\textit{systems with additive noise} i.e. systems where the dynamics map a
point deterministically to another point and then some random perturbation
is added independently at each iteration, according to a certain bounded
variation noise distribution kernel. {Starting with operators of this kind,
we construct} a family of transfer operators obtained {perturbatively}
either by changing the deterministic part of the system or by modifying
continuously the shape of the noise.

The main novelty and focus of this work is the {exploitation of} the
regularizing effect of the additive noise (essentially provided by the
regularizing effect of convolutions). This allows to work with almost no
assumptions on the deterministic part of the dynamics. {The few assumptions
we require are easy to be verified in many systems, they guarantee linear
response and yield simple explicit formulas}. {Our framework} allows to
treat systems in which the deterministic part of the dynamics is not
hyperbolic (or cannot be reduced to some hyperbolic dynamics by a suitable
renormalization or acceleration). In some sense the regularizing effect of
the noise, on suitable functional spaces, in our approach plays the role of
the Lasota-Yorke-Doeblin-Fortet inequalities, as commonly used in many other
functional analytic approaches to the study of the statistical properties of
systems. {Let us stress that a good feature of } our technical {solution} is
that the mixing and regularization assumptions are required only for the {%
unperturbed system} (something similar also appear in \cite{HM,JS}), this
allows us to consider a wide class of perturbations, without requiring
uniform estimates {with respect to} the perturbation.

We show the flexibility of our approach by applying it to nontrivial systems
of different kinds, as we will see in Section \ref{sec:finalmain}. We remark
that the presence of noise is natural in applications (see e.g. \cite%
{MT,XX,Gh, CGS,rdapp} for models in several applied contexts) and from the
mathematical point of view, this simplifies the functional analytic study of
the system allowing more regularity and robustness.

\noindent\textbf{Outline of the paper.} The paper has the following
structure: In section \ref{sec:Main} we introduce the relevant objects and
show the main results of the paper; in section \ref{general} we prove the
statements regarding our general Markov operator, in section \ref%
{sec:derivative} we consider systems with additive noise and prove the
required properties, computing the form for the ``derivative'' operators. In
section \ref{sec:finalmain}, we apply our theory to some nontrivial
examples. Last, in section \ref{sec:control} we consider the ``linear
request'' control problem associated to the linear response statement.

\bigskip\noindent {\ \textbf{Acknowledgments.} P.G. thanks the Universidade
Federal do Rio Grande do Sul, Porto Alegre, Brazil, where part of the work
was done. P.G. has been supported in part by EU Marie-Curie IRSES
Brazilian-European partnership in Dynamical Systems (FP7-PEOPLE-2012-IRSES
318999 BREUDS). P.G. acknowledges the support of the Centro di Ricerca
Matematica Ennio de Giorgi and of UniCredit Bank R\&D group for financial
support through the ``Dynamics and Information Theory Institute'' at the
Scuola Normale Superiore. S.G. thanks GNAMPA-INdAM for partial support
during this research. P.G. thanks B. Kloeckner, F. Flandoli, W. Bahsoun, M.
Monge for useful discussions on the matter along the years.}

\section{Main results\label{sec:Main}}

\subsection{A general Linear Response result for regularizing Markov
operators}

In Section \ref{general} we prove a linear response theorem for the fixed
points of a family of Markov operators. The structure of the statement is
not far from the one of \cite{HM} or other quantitative stability theorems,
but the technical solutions have been chosen here having in mind the
applications to systems with additive noise presented in the following
sections. We will consider a family of Markov operators and their action on
several spaces of {various regularity}. Let us introduce these spaces: let
us consider $BS[0,1]$ the set of Borel finite measures with sign on $[0,1]$;
let us also consider the space of finite absolutely continuous measures $%
L^{1}[0,1]$ equipped with the usual $\Vert \cdot \Vert _{1}$ norm as a
subset of $BS[0,1]$. Dealing with measures in $L^{1}[0,1]$ and related
densities (let us denote $L^{1}[0,1]$ as $L^{1}$ for short when the
underlying space is clear from the context, in other cases the argument of $%
L^{1}[\cdot ]$ will be specified) we will often identify the measure with
the density, when it is possible to do so, and profit from the expressive
power of the integral notation, writing for example $\int_{[0,1]}f~dm=0$,
where $m$ stand for the Lebesgue measure, {or} $f([0,1])=0.$ It will be
useful to also consider the set of even more regular measures having bounded
variation density, whose definition we are going to recall.

\begin{definition}
\label{def:spaces} Let $f:[0,1]\rightarrow \mathbb{R}$ and $%
P=P(x_{0},x_{1},\ldots ,x_{n})$ a partition of $[0,1]$. 
\begin{equation}
Var_{[0,1],P}(f):=\sum_{k=1}^{n}|f(x_{k})-f(x_{k-1})|
\end{equation}%
If there exists $M$ such that $Var_{[0,1]}(f):=\sup_{P}Var_{[0,1],P}\leq M$
then $f$ is said to be of Bounded Variation. Let be the Banach space of
Borel measures having a bounded variation density be denoted as 
\begin{equation*}
BV[0,1]=\{\mu \in L^{1},Var_{[0,1]}(\frac{d\mu }{dm})<\infty \}.
\end{equation*}%
with the norm $\Vert \mu \Vert _{BV}=\Vert \mu \Vert _{1}+Var_{I}(\frac{d\mu 
}{dm})$. We will always use $BV$ for $BV[0,1]$, unless $BV[\cdot ]$
specifies an argument for the space.
\end{definition}

Let us consider a normed vector space $(B_{w},\|\cdot \|_{w})$, with $%
BS\supseteq B_{w}\supseteq L^{1}$ and $\|\cdot \|_{w}\leq \|\cdot \|_{1}$ on 
$B_{w} \cap L^{1} $. We also need to consider spaces of zero average
measures.

\begin{definition}
\label{v}Let us define the space of zero average measures $V\subset L^{1}$ as%
\begin{equation}
V:=\{f\in L^{1}[0,1]~s.t.~f([0,1])=0\}  \label{d1}
\end{equation}%
and 
\begin{equation}
V_{w}:=\{\mu \in B_{w}~s.t.~~\mu ([0,1])=0\}.  \label{d2}
\end{equation}
\end{definition}

Let us hence consider a family of Markov operators $L_{\delta
}:BS([0,1])\rightarrow BS([0,1])$, where $\delta \in \lbrack 0,\overline{%
\delta })$. The family of operators $L_{\delta }$ will be considered also as
acting on $L^{1}$ and $BV$, the space of measures having bounded variation
density. \ With a small abuse of notation we will use $L_{\delta }$ to
indicate the operators acting on these different spaces without changing the
notation. Recall that a Markov operator $L$ is positive and preserves
probability measures: if $f\geq 0$ then $Lf\geq 0$ and $\int Lfdm=\int fdm$.
Let us denote by ${\mathds{1}}$ the identity operator. Let us denote by $%
R(z,L)$ the resolvent related to an operator $L$, formally defined as 
\begin{equation}
R(z,L)=\sum_{n=0}^{\infty }z^{n}L^{n}=(z{\mathds{1}}-L)^{-1}  \label{d3}
\end{equation}%
{wherever} the infinite series converges. If we suppose that each operator $%
L_{\delta }$ has a fixed probability measure in $BV[0,1]$, we show that
under mild further assumptions these fixed points vary smoothly in the
weaker norm $\Vert \cdot \Vert _{w}$. The following is a general Linear
Response statement for regularizing operators which we will use in several
examples of random systems and perturbations.

\begin{theorem}
\label{th:linearresponse} Suppose that the family of operators $L_{\delta}$
satisfies the following:

\begin{itemize}
\item[(LR0)] $f_{\delta }\in BV[0,1]$ is a probability measure such that $%
L_{\delta }f_{\delta }=f_{\delta }$ for each $\delta \in \left[ 0,\overline{%
\delta }\right) $. Moreover there is $M\geq 0$ such that $\Vert f_{\delta
}\Vert _{BV}\leq M$ for each $\delta \in \left[0,\overline{\delta }\right)$.

\item[(LR1)] (mixing for the unperturbed operator) For each $g\in BV[0,1]$
with ${\int_{I}g\,dm=0}$ 
\begin{equation*}
\lim_{n\rightarrow \infty } \|L_{0}^{n}g\|_{1}=0;
\end{equation*}

\item[(LR2)] (regularization of the unperturbed operator) $L_{0}$ is
regularizing from $B_{w}$ to $L^{1}$ and from $L^{1}$ to Bounded Variation
i.e. $L_{0}:(B_{w},\Vert \cdot \Vert _{w})\rightarrow L^{1}$ , ${L_{0}:L^{1}
\rightarrow BV[0,1]}$ are continuous.

\item[(LR3)] (small perturbation and derivative operator) There is $K\geq 0$
such that\footnote{%
We find the following notations for operators norms very useful. If $A,B$
are two normed vector spaces and $T:A \to B$ we write $\| T \|_{A \to B} :=
\sup_{f \in A, \| f \|_A \leq 1} \| Tf \|_{B}$} $\left\vert |L_{0}-L_{\delta
}|\right\vert _{L^{1}\rightarrow (B_{w},\Vert \cdot \Vert _{w})}\leq K\delta
,$ and $\left\vert |L_{0}-L_{\delta }|\right\vert _{BV\rightarrow V}\leq
K\delta $. There is ${\dot{L}f_{0}\in V_{w}}$ such that%
\begin{equation}  \label{derivativeoperator}
\underset{\delta \rightarrow 0}{\lim }\left\Vert \frac{(L_{0}-L_{\delta })}{%
\delta }f_{0}-\dot{L}f_{0}\right\Vert _{w}=0.
\end{equation}
\end{itemize}

Then $R(z,L_{0}):V_{w}\rightarrow $ $V_{w}$ is a continuous operator and we
have the following Linear Response formula 
\begin{equation}
\lim_{\delta \rightarrow 0}\left\Vert \frac{f_{\delta }-f_{0}}{\delta }%
-R(1,L_{0})\dot{L}f_{0}\right\Vert _{w}=0.  \label{linresp}
\end{equation}%
Thus $R(1,L_{0})\dot{L}f_{0}$ represents the first order term in the change
of equilibrium measure for the family of systems $L_{\delta }$.\newline
\end{theorem}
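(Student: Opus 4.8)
The plan is to first establish the one genuinely analytic fact behind the statement --- that $R(1,L_0)=(\id-L_0)^{-1}=\sum_{n\ge 0}L_0^{n}$ is a well-defined bounded operator on $V_{w}$ --- and then to read off the response formula from an exact resolvent identity relating $f_{\delta}-f_{0}$ to $(L_{\delta}-L_{0})f_{\delta}$. The construction of the resolvent is where mixing and regularization are combined, and it is the heart of the argument; everything afterwards is algebra plus three routine estimates.

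\smallskip\noindent\textbf{Step 1 (a spectral gap on $V$).} First I would work on the zero-average $L^{1}$ space $V$, which is closed in $L^{1}$ (the kernel of $f\mapsto\int f\,dm$) and hence Banach. By the second half of (LR2), $L_{0}:L^{1}\to BV$ is bounded, and since bounded subsets of $BV$ are relatively compact in $L^{1}$ by Helly's selection theorem, $L_{0}$ sends bounded subsets of $V$ to relatively compact subsets of $V$; thus $L_{0}:V\to V$ is compact (note $L_0$ preserves $V$ because it is Markov and hence integral-preserving). Next I would upgrade the rate-free mixing hypothesis (LR1) to strong convergence $\|L_{0}^{n}g\|_{1}\to 0$ for \emph{every} $g\in V$: this holds on the dense subspace $BV\cap V$ by (LR1), and since a Markov operator is an $L^{1}$-contraction (so $\|L_{0}^{n}\|_{L^{1}\to L^{1}}\le 1$), a standard density argument extends it to all of $V$. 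Combining compactness with $L_{0}^{n}\to 0$ strongly, the Riesz--Schauder theory tells us the spectrum of $L_{0}|_{V}$ consists of $0$ and eigenvalues accumulating only at $0$; any eigenvalue $\lambda$ with eigenvector $v\ne 0$ obeys $L_{0}^{n}v=\lambda^{n}v\to 0$, forcing $|\lambda|<1$. Hence the spectral radius of $L_{0}|_{V}$ is $<1$, the Neumann series converges in operator norm, and $R(1,L_{0})$ is bounded on $V$ for the $\|\cdot\|_{1}$ norm; the same estimate yields continuity of $R(z,L_{0})$ for every $|z|$ below $1/\rho(L_{0}|_{V})$, in particular at $z=1$.

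\smallskip\noindent\textbf{Step 2 (transfer to $V_{w}$).} For $g\in V_{w}$ the first half of (LR2) gives $L_{0}g\in L^{1}$, with zero average since $L_{0}$ is Markov, so $L_{0}g\in V$. Writing
\begin{equation*}
R(1,L_{0})g=g+R(1,L_{0})(L_{0}g),
\end{equation*}
the second summand lies in $V$ and is controlled in $\|\cdot\|_{w}\le\|\cdot\|_{1}$ by Step 1 together with the continuity of $L_{0}:(B_{w},\|\cdot\|_{w})\to L^{1}$, giving $\|R(1,L_{0})g\|_{w}\le (1+C)\|g\|_{w}$. Thus $R(1,L_{0}):V_{w}\to V_{w}$ is continuous, as claimed.

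\smallskip\noindent\textbf{Step 3 (resolvent identity and passage to the limit).} Using $L_{\delta}f_{\delta}=f_{\delta}$ and $L_{0}f_{0}=f_{0}$ I would derive the exact identity
\begin{equation*}
(\id-L_{0})(f_{\delta}-f_{0})=(L_{\delta}-L_{0})f_{\delta},
\end{equation*}
whose right-hand side has zero average, hence lies in $V$; since $f_{\delta}-f_{0}\in BV\cap V$ as well, applying the inverse from Step 1 gives $f_{\delta}-f_{0}=R(1,L_{0})(L_{\delta}-L_{0})f_{\delta}$. Dividing by $\delta$ and invoking continuity of $R(1,L_{0})$ on $V_{w}$, it suffices to prove $\big\|\tfrac{(L_{\delta}-L_{0})}{\delta}f_{\delta}-\dot L f_{0}\big\|_{w}\to 0$. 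I would split this as $\tfrac{(L_{\delta}-L_{0})}{\delta}(f_{\delta}-f_{0})$ plus $\big(\tfrac{(L_{\delta}-L_{0})}{\delta}f_{0}-\dot L f_{0}\big)$: the second term tends to $0$ directly by (LR3), while the first is bounded in $\|\cdot\|_{w}$ by $K\|f_{\delta}-f_{0}\|_{1}$ using the operator bound $\|L_{0}-L_{\delta}\|_{L^{1}\to(B_{w},\|\cdot\|_{w})}\le K\delta$ of (LR3). The remaining qualitative stability $\|f_{\delta}-f_{0}\|_{1}\to 0$ falls out of the very same identity, since
\begin{equation*}
\|f_{\delta}-f_{0}\|_{1}\le\|R(1,L_{0})\|_{V\to V}\,\|(L_{\delta}-L_{0})f_{\delta}\|_{1}\le\|R(1,L_{0})\|_{V\to V}\,KM\delta,
\end{equation*}
where the last inequality uses $\|L_{0}-L_{\delta}\|_{BV\to V}\le K\delta$ and $\|f_{\delta}\|_{BV}\le M$ from (LR0) and (LR3). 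Assembling the three estimates yields \eqref{linresp}.

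\smallskip The main obstacle is Step 1: producing a genuine spectral gap for $L_{0}$ on $V$, and hence the boundedness of $R(1,L_{0})$, out of the weak, rate-free mixing assumption (LR1) alone. The decisive mechanism is that the regularizing property (LR2) turns $L_{0}$ into a \emph{compact} operator on $V$ through the compact embedding $BV\hookrightarrow L^{1}$, and it is precisely compactness that promotes the pointwise decay $L_{0}^{n}\to 0$ into summability of the Neumann series. Once this is in hand, the transfer to the weak norm, the resolvent identity, and the control of $\tfrac{1}{\delta}\|f_{\delta}-f_{0}\|_{1}$ are all routine.
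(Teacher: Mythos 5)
Your proposal is correct and shares the paper's overall architecture (bound the resolvent on $V$, transfer it to $V_w$ via $R(1,L_0)g=g+R(1,L_0)L_0g$, and extract the response from the exact identity $(\id-L_0)(f_\delta-f_0)=(L_\delta-L_0)f_\delta$ together with the splitting of $(L_\delta-L_0)f_\delta$ into an $f_0$-term and an $(f_\delta-f_0)$-term), but it replaces two of the paper's key sub-arguments with different ones. First, where you deduce $\rho(L_0|_V)<1$ from compactness of $L_0|_V$ (via (LR2) and Helly) plus Riesz--Schauder theory, the paper's Lemma \ref{limmin} reaches the same exponential contraction by a more elementary route: an $\varepsilon$-net argument producing $\sup_{\|f\|_1\le 1,\,f\in V}\|L_0^{\overline n}f\|_1<1$ for some finite $\overline n$. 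Both hinge on the same compact embedding $BV\hookrightarrow L^1$; yours is shorter but requires the (routine, unstated) complexification step, since eigenvalues of the real operator $L_0|_V$ may be complex while (LR1) is stated for real densities. Second, and more interestingly, you obtain the statistical stability $\|f_\delta-f_0\|_1\to 0$ directly from the inverted identity as $\|f_\delta-f_0\|_1\le\|R(1,L_0)\|_{V\to V}KM\delta$, whereas the paper proves it beforehand and separately via the telescoping decomposition $L_0^N-L_\delta^N=\sum_k L_0^{N-k}(L_0-L_\delta)L_\delta^{k-1}$ and the bound $2Ce^{\lambda N}+\varepsilon_\delta N$. Your route is shorter and quantitatively sharper (an explicit $O(\delta)$ rate, versus $o(1)$ as written, or $O(\delta\log\delta^{-1})$ after optimizing $N$), and it still degrades gracefully to $o(1)$ under the weakened hypothesis $\|f_\delta\|_{BV}=o(\delta^{-1})$ of Remark \ref{weakLR0}. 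One cosmetic point: the sign mismatch between your $\frac{(L_\delta-L_0)}{\delta}f_0\to\dot Lf_0$ and the statement's $\frac{(L_0-L_\delta)}{\delta}f_0\to\dot Lf_0$ in (LR3) is already present between the paper's own statement and proof, so it is not a defect of your argument.
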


In the following Lemma we see that the assumptions $(LR1,...,LR3)$ of
Theorem \ref{th:linearresponse} are sufficient to establish the existence of
a unique fixed probability measure $f_{0}$ of $L_{0}$ in BV.

\begin{lemma}
\label{extuniq} Under the above assumptions $(LR1,...,LR3)$ the system $%
L_{0} $ has a unique fixed probability measure $f_{0}\in $ BV.
\end{lemma}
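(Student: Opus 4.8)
The plan is to obtain existence by a Krylov--Bogolyubov type argument, extracting the fixed point from a single orbit: I would use the regularization hypothesis (LR2) to gain compactness and the mixing hypothesis (LR1) to certify that the extracted limit is genuinely invariant. Uniqueness will then follow immediately from (LR1) alone. Only (LR1) and the second half of (LR2) are actually needed; (LR0) and (LR3) play no role here.

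For existence, I would start the orbit at the Lebesgue measure $m$, whose density is the constant $1$, so that $m\in BV$. Since $L_{0}$ is Markov it contracts the $L^{1}$ norm, whence $\Vert L_{0}^{n}m\Vert _{1}\le 1$ for all $n$; and by (LR2) the map $L_{0}:L^{1}\to BV$ is bounded, say $\Vert L_{0}f\Vert _{BV}\le C\Vert f\Vert _{1}$, so that $\Vert L_{0}^{n}m\Vert _{BV}=\Vert L_{0}(L_{0}^{n-1}m)\Vert _{BV}\le C$ for every $n\ge 1$. Thus the tail of the orbit is bounded in $BV$, and Helly's selection theorem (the compact embedding $BV\hookrightarrow L^{1}$) produces a subsequence $L_{0}^{n_{j}}m$ converging in $L^{1}$ to some $f_{0}$. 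This $f_{0}$ is a probability density, being an $L^{1}$ limit of nonnegative densities with $\int_{[0,1]}f_{0}\,dm=\lim_{j}\int_{[0,1]}L_{0}^{n_{j}}m\,dm=1$ because $L_{0}$ preserves the integral.

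Next I would show that $f_{0}$ is fixed and lies in $BV$. Setting $g:=L_{0}m-m$, both terms are probability measures in $BV$, so $g\in BV$ with $\int_{[0,1]}g\,dm=0$; by (LR1), $\Vert L_{0}^{n}g\Vert _{1}\to 0$, and since $L_{0}^{n+1}m-L_{0}^{n}m=L_{0}^{n}g$ the consecutive differences along the orbit vanish in $L^{1}$. As $L_{0}$ is an $L^{1}$ contraction it is continuous on $L^{1}$, so $L_{0}f_{0}=\lim_{j}L_{0}^{n_{j}+1}m$, and therefore
\begin{equation*}
\Vert L_{0}f_{0}-f_{0}\Vert _{1}=\lim_{j}\Vert L_{0}^{n_{j}+1}m-L_{0}^{n_{j}}m\Vert _{1}=0,
\end{equation*}
giving $L_{0}f_{0}=f_{0}$; finally $f_{0}\in L^{1}$ together with (LR2) yields $f_{0}=L_{0}f_{0}\in BV$. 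For uniqueness, if $f_{0},\tilde{f}_{0}\in BV$ are two fixed probability measures, then $h:=f_{0}-\tilde{f}_{0}\in BV$ has zero average and satisfies $L_{0}^{n}h=h$ for all $n$, so $\Vert h\Vert _{1}=\Vert L_{0}^{n}h\Vert _{1}\to 0$ by (LR1), forcing $h=0$.

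The argument is short, and I expect the only genuinely delicate point to be the interplay of the two unperturbed hypotheses in the existence step: regularization (LR2) supplies the uniform $BV$ bound needed for compactness, while mixing (LR1) is what upgrades a mere subsequential $L^{1}$ cluster point into an honest fixed point by killing the consecutive differences $L_{0}^{n}(L_{0}m-m)$. Checking that the extracted limit is simultaneously invariant, of bounded variation, and a probability density is where care is required; the remaining steps (the $L^{1}$ contraction property, Helly's theorem, integral preservation) are routine.
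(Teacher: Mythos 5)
Your proof is correct, and it differs from the paper's in one substantive way: the mechanism used to show that the extracted limit is invariant. The paper follows the classical Krylov--Bogoliubov route, applying Helly's selection theorem to the Ces\`aro averages $m_{n}=\frac{1}{n}\sum_{0\leq i\leq n}L_{0}^{i}(m)$ and obtaining invariance of the limit from the telescoping identity $L_{0}m_{n}-m_{n}=\frac{1}{n}(L_{0}^{n+1}m-m)$, whose $L^{1}$ norm is at most $2/n$; mixing is not used there at all, only for uniqueness. You instead apply Helly directly to the orbit $L_{0}^{n}m$ and invoke $(LR1)$ on the zero-average $BV$ element $g=L_{0}m-m$ to kill the consecutive differences $L_{0}^{n+1}m-L_{0}^{n}m=L_{0}^{n}g$, which upgrades the subsequential cluster point to a fixed point. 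Both arguments are valid and of comparable length. The trade-off is that the paper's version establishes existence under the regularization hypothesis alone (so it would survive in a non-mixing setting, where one still gets \emph{some} invariant $BV$ density), whereas yours consumes $(LR1)$ already in the existence step; in exchange, your version makes fully explicit the invariance verification that the paper delegates to ``as in the classical argument,'' and it avoids introducing the averaged sequence. The uniform $BV$ bound on the orbit, the passage $f_{0}=L_{0}f_{0}\in BV$ via regularization, and the uniqueness argument are identical in the two proofs.
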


\begin{proof}
First let us show the existence of a positive invariant measure with Bounded
Variation density. Let us consider the iterates $L_{0}^{n}(m)$ of the
Lebesgue measure $m$. Because of the regularization property all of these
measures have a Bounded Variation density. Since $L_{0}$ is a Markov
operator, $\Vert L_{0}^{n}(m)\Vert _{1}\leq 1$ for all $n$. Since $%
L_{0}:L^{1}\rightarrow BV[0,1]$ is continuous we have that 
\begin{equation*}
\Vert L_{0}^{n}(m)\Vert _{BV}\leq \Vert L_{0}\Vert _{L^{1}\rightarrow
BV}\Vert L_{0}^{n-1}(m)\Vert _{1}\leq \Vert L_{0}\Vert _{L^{1}\rightarrow BV}
\end{equation*}%
for every $n$. Now, like in the classical Krylov Bogoliubov argument let us
consider the Ces\`{a}ro averages $m_{n}=\frac{1}{n}\sum_{0\leq i\leq
n}L^{i}(m)$. This is a sequence having uniformly bounded variation. Now
applying the classical Helly selection theorem to $m_{n}$ we get a
converging subsequence to some limit measure $f_{0}$ in $L^{1}$. As in the
classical argument, $f_{0}\in L^{1}$ will be invariant. But since $L_{0}$ is
regularizing $L_{0}(f_{0})=f_{0}\in BV$. For the uniqueness, if $f,g$ are
two {such} fixed points we have that $f,g\in BV$ and $f-g\in V$ {i.e. $f-g$
has zero average}. Thus, by the mixing assumption, 
\begin{equation*}
\Vert f-g\Vert _{1}=\Vert L_{0}^{n}(f)-L_{0}^{n}(g)\Vert _{1}=\Vert
L_{0}^{n}(f-g)\Vert _{1}\rightarrow 0
\end{equation*}%
contradicting $f\neq g$ in $L^{1}$.
\end{proof}

\begin{remark}
The assumption $(LR0)$ is naturally verified for a family of uniformly
regularizing operators. We will show that the assumption holds for general
systems with additive noise in Lemma \ref{lem:fixedpoint}. We remark that
the assumption \ can be replaced by a weaker assumption: $\Vert f_{\delta
}\Vert _{BV}\leq o(\delta ^{-1})$ as $\delta \rightarrow 0$ (see Remark \ref%
{weakLR0}).
\end{remark}

\begin{remark}
\label{rmkLM}The mixing assumption in Item $(LR1)$ is required only for the 
\emph{\ unperturbed operator} $L_{0}$. This {\ requirement } is {somehow
expected by} systems with additive noise having some sort of
indecomposability or topological mixing, we will verify it in several
examples in Section \ref{sec:finalmain} of different nature, also using a
computer aided proof. Furthermore, the assumption is satisfied, for example,
if there is an iterate of the transfer operator having a strictly positive
kernel, see Corollary 5.7.1 of \cite{LM}.
\end{remark}

\begin{remark}
The regularization property $(LR2)$ is also required only for the
unperturbed operator. We remark that the regularization assumption in the
class of systems with additive noise we aim to consider is granted by the
presence of noise. This is shown under some minimal set of assumptions on
the\ deterministic part of the dynamics in Corollary \ref{regcor}.
\end{remark}

\begin{remark}
\label{rrm6}As stated at Item $(LR3)$, $\dot{L}f_{0}$ is an element of $%
V_{w} $. Depending on the kind of system and perturbation considered, it
could be natural to consider $\dot{L}f_{0}$ in a space of distributions
(completing signed measures with respect to limits in the $\Vert \cdot \Vert
_{w}$ norm), however for the purpose of this paper, signed measures are
sufficient. We remark that one can think of%
\begin{equation}
\dot{L}:=\lim_{\delta \rightarrow 0}\frac{(L_{0}-L_{\delta })}{\delta }
\label{de}
\end{equation}%
as a {\textquotedblleft }general\textquotedblright\ derivative operator.
This limit may converge in various topologies, depending on the system and
the perturbations considered, giving different linear response statements
(as we will see in the following Proposition \ref{derivop}, Corollary \ref%
{finalcor} and the examples in Section \ref{sec:finalmain}). This is why it
is worth considering a general norm $\Vert \cdot \Vert _{w}$ in our
framework. Note that it is often handy to have some explicit
characterization of the derivative operator, thus obtaining precise
information on the structure of the linear response (see equation \eqref{linresp}). Explicit expressions for the derivative operator in interesting cases
will be presented in Proposition \ref{derivop}.
\end{remark}

\begin{remark}
In the above statement, $L^{1}[0,1]$ and $BV[0,1]$ play the role of weak and
strong space for which there is a compact immersion, {such that} $L_{0}^{n}$
is uniformly bounded as an operator on the weak space and for which $L_{0}$
is regularizing from the weak space to the strong one (see the proof in
Section \ref{general}). The statement can be generalized considering other
spaces with the same properties. For the sake of clarity and for the kind of
examples we are going to consider in the paper we decided to state our
results in this framework.
\end{remark}

\begin{remark}
In the above statement the weak norm $\Vert \cdot \Vert _{w}$ could be the $%
L^{1}$ norm itself. For an example of a weak norm $\Vert \cdot \Vert _{w}$
strictly weaker than $L^{1}$ which is used in the next sections let us
consider the Wasserstein-Kantorovich norm defined on $BS$ as%
\begin{equation}
\Vert \mu \Vert _{W}=\underset{\Vert g\Vert _{\infty }\leq 1,~Lip(g)\leq 1}{%
\sup }\int_{0}^{1}g(x)d\mu .  \label{Wass}
\end{equation}%
Where $Lip(g)$ is the best Lipschitz constant of $g$. We remark that $BS$ is
not complete with this norm. The completion leads to a distributions space
which is the dual of the space of Lipschitz functions. {\ Note that $\Vert
\cdot \Vert _{W}\leq \Vert \cdot \Vert _{1}\leq \Vert \cdot \Vert _{BV}$. 
}
\end{remark}

\subsection{Application to systems with additive noise\label{adnois}}

In Section \ref{sec:derivative} we focus our attention on systems with
additive noise and use Theorem \ref{th:linearresponse} to get linear
response statements for relevant perturbations of such systems. Let $%
T_{\delta }:[0,1]\rightarrow \lbrack 0,1]$ be a family of Borel, nonsingular
maps parametrized by $\delta \in \lbrack 0,\overline{\delta }]$ which are
\textquotedblleft small"\ perturbations of $T_{0}$ in a sense which will be
specified later (see Proposition \ref{derivop}). We consider the composition
of $T_{\delta }$ and a random additive perturbation, distributed according
to a probability density $\rho _{\delta }$, where $\rho _{\delta }\in BV$ is
a family of noise kernels with support in $[0,1]$ which are considered as
small perturbations of some initial noise kernel $\rho _{0}$. We consider
the response of the system to small \textquotedblleft reasonable"\ changes
of $T_{0}$ or $\rho _{0}$. More precisely, a random dynamical system with
additive noise on $[0,1]$ and reflecting boundary conditions in our context
is a random perturbation of a deterministic map, defined {inductively as the
process 
\begin{equation}  \label{systm}
x_{n+1} = T(x_{n})\hat{+}\Omega_{n}
\end{equation}
} 
where $T:[0,1]\rightarrow \lbrack 0,1]$ is a Borel measurable map, $\Omega_n$
is an i.i.d. process distributed according to a probability density $\rho $ {%
\ $\in BV$} and $\hat{+}$ is the \textquotedblleft reflecting boundaries
sum" on $[0,1]$ which takes care of the boundary effects, sending back to
the interval points sent outside by the noise, formally defined as follows.

\begin{definition}
Let $\pi :{\mathbb{R}}\rightarrow [0,1]$ be the piecewise linear map 
\begin{equation}
\pi (x)=\min_{i\in \mathbb{Z}}|x-2i|.  \label{ppi}
\end{equation}%
Let $a,b\in {\mathbb{R}}$ then 
\begin{equation*}
a\hat{+}b:=\pi (a+b)
\end{equation*}
where $+$ is the usual sum operator on $\mathbb{R}$. By this $a\hat{+}b\in
[0,1].$
\end{definition}

To approach the question, we will consider the transfer operators associated
to this kind of systems (see e.g. \cite[Chapter 5]{Viana} or {\ \cite[%
Section 10]{LM94} } for basic notions about transfer operators for random
systems). For each {$\delta \in \lbrack 0,\overline{\delta })$ we consider
the transfer operator $L_{{\delta }}:BS\rightarrow BS$ associated to the
system with deterministic part $T_{\delta }$ and noise distributed as $\rho
_{\delta }$. This will be the composition of the transfer operator
associated to the deterministic part of the dynamics and the one associated
to the action of the noise. Recall that the transfer operator associated to
a deterministic transformation $T_{\delta }$ is defined, as usual by the
pushforward map also denoted by $(T_{\delta })_{\ast },$ by 
\begin{equation}
\lbrack L_{T_{\delta }}(\mu )](A):=[(T_{\delta })_{\ast }\mu ](A):=\mu
(T_{\delta }^{-1}(A))  \label{pish}
\end{equation}%
for each measure with sign $\mu $ and Borel set $A$. When $T$ is nonsingular 
$L_{T}$ preserves absolutely continuous finite measures and can be
considered as an operator $L^{1}\rightarrow L^{1}$. We consider as the
(annealed) transfer operator associated to the system with additive noise
the operator $L_{\xi ,T_{\delta }}:BS\rightarrow L^{1}$ defined as%
\begin{equation}
L_{\xi ,T_{\delta }}(f):=\rho _{\xi }\hat{\ast}L_{T_{\delta }}(f)
\label{eq:transop}
\end{equation}%
where $\hat{\ast}$ stands for an operator which is a \textquotedblleft
boundary reflecting" convolution taking care of the boundary effects,
defined as follows. }

\begin{definition}
\label{def:reflectingboundary} Let $\mu \in BS({\mathbb{R}})$. Let $\pi :{%
\mathbb{R}}\rightarrow \lbrack 0,1]$ be the piecewise linear map defined in $%
($\ref{ppi}$)$, and $\pi _{\ast }:BS[{\mathbb{R}}]\rightarrow BS[0,1]$ its
associated pushforward map (see \eqref{pish}). We consider $\pi _{\ast }\mu
\in BS[0,1]$ as the \textquotedblleft reflecting boundary\textquotedblright\
version of $\mu $ on the interval.
\end{definition}

\begin{definition}
\label{def:hatconv} Let $f\in BS$, $\rho _{\xi }\in BV[0,1]$. Let $\hat{f}%
\in BS[\mathbb{R}]$ defined by $\hat{f}=1_{[0,1]}f$ and $\hat{\rho}_{\xi
}\in L^{1}[\mathbb{R}]$ by $\hat{\rho}_{\xi }=1_{[0,1]}\rho _{\xi }$ (where $%
1_{A}$ stands for the indicator function of the set $A$). We define 
\begin{equation}
\rho _{\xi }\hat{\ast}f=\pi _{\ast }(\hat{\rho}_{\xi }\ast \hat{f})
\label{hat}
\end{equation}%
where $\ast $ stands for the usual convolution operator and $\pi _{\ast }$
acts on the associated measure. {Note that $\rho _{\xi }\hat{\ast}f\in
BS[0,1]$.}
\end{definition}

More details on the convolution of measures and densities will be given in
Section \ref{sec:derivative}, where we will also prove several
regularization properies of $\hat{\ast}$.

In the following we will consider two possible choices for the weak norm $%
||~||_{w}$. \ One is the usual $L^{1}$ norm and the other is the
Wasserstein-like norm defined in $($\ref{Wass}$)$. This norm will be useful
when considering perturbations of noise kernels having discontinuities, as
the uniformly distributed noise{\ (see }Section \ref{sec:derivative} and in
particular Example \ref{exnoise}).

As we will see, these transfer operators satisfy the assumptions of Theorem %
\ref{th:linearresponse} \emph{provided they are mixing}{, }$T_{0}$ has
suitable, mild, regularity properties{\ and we consider suitable
perturbations}. Indeed, we will see at Lemma \ref{lem:fixedpoint} that such
operators have a fixed probability measure $f_{\xi ,\delta }$ with bounded
variation, and the variation of $f_{\xi ,\delta }$ is bounded by the
variation of $\rho _{\xi }$ (assumption $(LR0)$). The regularizing effect
asked in assumption $(LR2)$ for the unperturbed transfer operator is
provided by the effect of the convolution (see Corollary \ref{regcor}). Now
let us consider assumption $(LR3)$ and study the derivative operator
associated to some important class of perturbations. We consider
perturbations of $T_{0}$ or $\rho _{0}$ such that $L_{\xi ,T_{\delta }}$ 
is a small perturbation of $L_{0,T_{0}}$ in a suitable sense; and such that
the derivative operator $\dot{L}$ is well defined. Under certain
assumptions, we can compute a formula for $\dot{L}$. In fact we have the
following (see Propositions \ref{pertmap}, \ref{plusnoise}, \ref{nearness
copy(1)}).

\begin{proposition}
\label{derivop} Let $L_{0,T_{0}}$ be the transfer operator of a system with
additive noise, as defined above. For perturbations of the map $T_{0}$ or of
the noise kernel $\rho _{0}$ the following hold:

\begin{description}
\item[a] (perturbing the map) Let $D_{\delta }:[0,1]\rightarrow \lbrack 0,1]$
be a bilipschitz homeomorphism near the identity such that $D_{\delta }={%
\mathds{1}}+\delta S$ , and $S$ is a $1$-Lipschitz map\footnote{%
By $1$-Lipschitz we mean a function $S:{\mathbb{R}}\rightarrow {\mathbb{R}}$
such that $|S(x)-S(y)|\leq |x-y|$.} with $S(0)=S(1)=0$. Denote by $\chi _{S}$
the support of $S,$ suppose $\chi _{S}$ is a finite union of intervals.
Suppose $L_{T_{0}}:BV([0,1])\rightarrow BV(\chi _{S})$ is continuous. 
\footnote{%
There is $K\geq 0$ such that $\Vert 1_{\chi _{S}}\cdot L_{T_{0}}f\Vert
_{BV}\leq K\Vert f\Vert _{BV}$ for each $f\in BV[0,1]$. This condition is
sufficient to ensure that $L_{T_{0}}(f)S$ is a $BV$ function and then $%
(L_{T_{0}}(f)S)^{\prime }\in BS.$} Let $T_{\delta }=D_{\delta }\circ T_{0}$.
Then {$\| L_{0,T_{\delta }} f - L_{0,T_{0}}f\|_{1} \leq \delta \mathrm{Var}%
(\rho_{0})\|f\|_{1}$} and for all $f\in BV,$ the following limit (defining
the derivative operator) converges in $L^{1}${\ 
\begin{equation}
\underset{\delta \rightarrow 0}{\lim }\left\Vert \frac{(L_{0,T_{\delta
}}-L_{0,T_{0}})}{\delta }f-\rho _{0}\hat{\ast}(-L_{T_{0}}(f)S)^{\prime
}\right\Vert _{1}=0.  \label{eq:noiseresponse1}
\end{equation}
} In this formula, $(L_{T}(f)S)^{\prime }$ should be interpreted as a
measure.

\item[b] (perturbing the noise) Suppose $\Vert \rho _{\xi }-\rho _{0}\Vert
_{1}\leq C\xi $, $f\in L^{1}$, then 
\begin{equation*}
\Vert L_{\xi ,T_{0}}f-L_{0,T_{0}}f\Vert _{W}\leq \Vert L_{\xi
,T_{0}}f-L_{0,T_{0}}f\Vert _{1}\leq C\xi \Vert f\Vert _{1}.
\end{equation*}

Suppose there exists $\dot{\rho}\in BS$ such that 
\begin{equation*}
\underset{\xi \rightarrow 0}{\lim }\left\Vert \frac{\rho _{\xi }-\rho _{0}}{%
\xi }-\dot{\rho}\right\Vert _{W}=0
\end{equation*}%
(the convergence is with respect to the Wasserstein-Kantorovich norm defined
at (\ref{Wass})) suppose $T_{0}$ is nonsingular and $f\in L^{1}$, then 
\begin{equation}
\underset{\xi \rightarrow 0}{\lim }\left\Vert \frac{(L_{\xi
,T_{0}}-L_{0,T_{0}})}{\xi }f-\dot{\rho}\hat{\ast}L_{T}(f)\right\Vert _{W}=0.
\label{13.5}
\end{equation}
\end{description}
\end{proposition}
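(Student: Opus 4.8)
The plan is to prove each of the two estimates and each of the two derivative-operator limits in Proposition~\ref{derivop} by reducing everything to elementary properties of the convolution operator $\hat{\ast}$ and the pushforward structure of the transfer operators, using the factorization $L_{\xi,T_\delta}(f)=\rho_\xi\,\hat{\ast}\,L_{T_\delta}(f)$ from \eqref{eq:transop}. The two parts are independent: in part~\textbf{a} only the map is perturbed (so $\xi=0$ throughout and the noise kernel stays $\rho_0$), while in part~\textbf{b} only the noise is perturbed (so $T_\delta=T_0$ throughout). In both cases the key algebraic move is to write the difference $L_{0,T_\delta}f-L_{0,T_0}f$ (resp. $L_{\xi,T_0}f-L_{0,T_0}f$) as a single convolution $\rho_0\,\hat{\ast}\,(\cdot)$ (resp. $(\rho_\xi-\rho_0)\,\hat{\ast}\,(\cdot)$) and then exploit that $\hat{\ast}$ contracts the relevant norms. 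I expect the convolution-regularization lemmas promised for Section~\ref{sec:derivative} to supply exactly the needed bounds $\|\rho_0\,\hat{\ast}\,g\|_1\le\mathrm{Var}(\rho_0)\|g\|_{BS}$ and $\|\dot\rho\,\hat{\ast}\,h\|_W\le\|\dot\rho\|_W\|h\|_1$-type estimates; I would invoke these as the main technical inputs.

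For part~\textbf{b} I would start, since it is cleaner. The bound $\Vert L_{\xi,T_0}f-L_{0,T_0}f\Vert_1\le C\xi\Vert f\Vert_1$ follows by writing $L_{\xi,T_0}f-L_{0,T_0}f=(\rho_\xi-\rho_0)\,\hat{\ast}\,L_{T_0}(f)$, using that $\hat{\ast}$ with an $L^1$ kernel does not increase the $L^1$ norm of the other factor (Young-type inequality adapted to the reflecting convolution), so the $L^1$ norm is controlled by $\Vert\rho_\xi-\rho_0\Vert_1\,\Vert L_{T_0}(f)\Vert_1\le C\xi\Vert f\Vert_1$, since $L_{T_0}$ is $L^1$-contracting; the $\Vert\cdot\Vert_W\le\Vert\cdot\Vert_1$ inequality is just the elementary comparison of the two norms. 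For the derivative limit \eqref{13.5} the same factorization gives $\tfrac{1}{\xi}(L_{\xi,T_0}-L_{0,T_0})f-\dot\rho\,\hat{\ast}\,L_T(f)=\bigl(\tfrac{\rho_\xi-\rho_0}{\xi}-\dot\rho\bigr)\,\hat{\ast}\,L_{T_0}(f)$, and I would bound its $W$-norm by a continuity estimate for the map $g\mapsto g\,\hat{\ast}\,L_{T_0}(f)$ from the $W$-norm on kernels to the $W$-norm on outputs; the hypothesis $\Vert\tfrac{\rho_\xi-\rho_0}{\xi}-\dot\rho\Vert_W\to0$ then closes it. The small obstacle here is establishing $W$-continuity of $\hat{\ast}$ in the kernel slot, i.e. that testing against Lipschitz functions behaves well under the reflecting convolution; this should reduce to the fact that convolving a test function with a fixed $L^1$ density preserves Lipschitz bounds.

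Part~\textbf{a} is the more involved one, and I expect it to be the main obstacle. Here $T_\delta=D_\delta\circ T_0$ with $D_\delta=\mathds{1}+\delta S$, so I must understand $(D_\delta)_\ast$ acting on the already-pushed density $L_{T_0}(f)$. The plan is to write $L_{T_\delta}(f)=(D_\delta)_\ast L_{T_0}(f)$ and expand the pushforward under the near-identity diffeomorphism to first order in $\delta$: for a density $h=L_{T_0}(f)$ and a test function $g$, the change of variables gives $\int g\,d((D_\delta)_\ast h)=\int g(x+\delta S(x))\,h(x)\,dx$, whose $\delta$-derivative at $0$ is $\int g'(x)S(x)h(x)\,dx=-\int g\,d\bigl((hS)'\bigr)$ after integration by parts, identifying the weak limit of $\tfrac{1}{\delta}((D_\delta)_\ast-\mathds{1})h$ as $-(hS)'=-(L_{T_0}(f)S)'$ in the sense of measures. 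The continuity hypothesis $L_{T_0}:BV\to BV(\chi_S)$ together with the footnote guarantees $(L_{T_0}(f)S)'\in BS$, so this measure is legitimate. I would then convolve with $\rho_0$: since $L_{0,T_\delta}f-L_{0,T_0}f=\rho_0\,\hat{\ast}\,(L_{T_\delta}(f)-L_{T_0}(f))$, the preliminary bound $\Vert L_{0,T_\delta}f-L_{0,T_0}f\Vert_1\le\delta\,\mathrm{Var}(\rho_0)\Vert f\Vert_1$ comes from estimating $\Vert L_{T_\delta}(f)-L_{T_0}(f)\Vert_{BS}$ by $\delta\Vert f\Vert_1$ (the displacement is at most $\delta$ by $1$-Lipschitzness of $S$) and then applying $\Vert\rho_0\,\hat{\ast}\,\nu\Vert_1\le\mathrm{Var}(\rho_0)\Vert\nu\Vert_{BS}$, which is where the bounded variation of $\rho_0$ enters (a signed-measure input convolved with a $BV$ kernel lands in $L^1$ with norm controlled by the total variation of the kernel). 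Finally, for the $L^1$-convergence \eqref{eq:noiseresponse1} I would write the difference quotient error as $\rho_0\,\hat{\ast}\,\bigl(\tfrac{1}{\delta}(L_{T_\delta}(f)-L_{T_0}(f))-(-(L_{T_0}(f)S)')\bigr)$ and push the limit inside the convolution: the regularizing estimate turns $W$-convergence (or weak-$\ast$ convergence of the bracketed measures, which is what the first-order expansion naturally delivers) into $L^1$-convergence after convolving with the fixed $BV$ kernel $\rho_0$.

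\medskip

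The delicate points I would watch are, first, the interchange of the $\delta\to0$ limit with the convolution, which must be justified by a uniform-in-$\delta$ regularization bound so that the residual $\tfrac{1}{\delta}(L_{T_\delta}(f)-L_{T_0}(f))+(L_{T_0}(f)S)'$, which tends to zero only in a weak sense, is upgraded to an $L^1$-small quantity once convolved with $\rho_0$; and second, the boundary reflection encoded in $\hat{\ast}$ and $\pi_\ast$, which I would handle by noting that for $S$ supported in $\chi_S$ away from the issues at the endpoints the reflecting sum agrees with the ordinary convolution on the relevant region, so the boundary map contributes no extra first-order term. The smoothness assumptions ($S$ being $1$-Lipschitz with $S(0)=S(1)=0$, $\chi_S$ a finite union of intervals, and $L_{T_0}:BV\to BV(\chi_S)$ continuous) are precisely what make the integration by parts valid and keep $(L_{T_0}(f)S)'$ a finite signed measure, so I would verify each is used at the corresponding step rather than assumed away.
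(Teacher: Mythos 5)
Your proposal follows essentially the same route as the paper: factor the difference through the reflecting convolution, obtain the first-order expansion of the near-identity pushforward in the Wasserstein norm by duality with the composition operator, dominated convergence and integration by parts, and then upgrade to $L^{1}$ via the regularizing inequality $\Vert \nu \,\hat{\ast}\, g\Vert_{1}\leq 3\Vert \nu \Vert_{W}\Vert g\Vert_{BV}$ for zero-average $\nu$ (the paper's convolution lemma), with part (b) handled exactly as you describe by linearity in the kernel slot together with the analogous $W$-norm estimate $\Vert \nu\,\hat{\ast}\,g\Vert_{W}\leq \Vert\nu\Vert_{W}\Vert g\Vert_{1}$. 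The one point to tighten is notational rather than substantive: your intermediate bound $\Vert L_{T_{\delta}}f-L_{T_{0}}f\Vert \leq \delta \Vert f\Vert_{1}$ holds in the Wasserstein norm, not in the total-variation norm on $BS$ (two point masses a distance $\delta$ apart are at total-variation distance $2$), and it is precisely because the input to the convolution is only $W$-small and of zero average that the $BV$ regularity of $\rho_{0}$, rather than mere integrability, is required in the final regularization step.
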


Applying the findings of Proposition \ref{derivop}, Lemma \ref%
{lem:fixedpoint} \ and Corollary \ref{regcor} \ to Theorem \ref%
{th:linearresponse} (see \ Section \ref{sec:derivative} for more details) we
get the following linear response results for perturbations of systems with
additive noise

\begin{corollary}
\label{finalcor}Let $L_{0,T_{0}}$ be the transfer operator of a system with
additive noise, as above. Suppose that $L_{0,T_{0}}$ satisfies assumption $%
(LR1)$ (mixing).\ For perturbations of $\ T_{0}$ or of the noise kernel $%
\rho _{0}$ as in Proposition \ref{derivop} the following hold:

\begin{description}
\item[a] (perturbing the map) Let $T_{\delta }$ be a perturbation of $T_{0}$
as defined at Item a) of Proposition \ref{derivop} and let $L_{0,T_{\delta
}} $ be the transfer operator associated to the system with additive noise
whose deterministic part is $T_{\delta }$ \ and the noise is distributed
according to $\rho _{0}\in BV.$ Let $f_{\delta }$ be fixed probability
measures of the transfer operators $L_{0,T_{\delta }},$ i.e. $L_{0,T_{\delta
}}f_{\delta }=f_{\delta }$ then $R(z,L_{0,T_{0}}):V\rightarrow $ $V$ is a
continuous operator and we have the following Linear Response formula 
\begin{equation}
\lim_{\delta \rightarrow 0}\left\Vert \frac{f_{\delta }-f_{0}}{\delta }%
-R(1,L_{0})\rho _{0}\hat{\ast}(-L_{T_{0}}(f)S)^{\prime }\right\Vert _{1}=0.
\label{finalres1}
\end{equation}

\item[b] (perturbing the noise) Suppose $T_{0}$ is Lipschitz, $\Vert \rho
_{\xi }-\rho _{0}\Vert _{1}\leq C\xi $, and that there exists $\dot{\rho}\in
BS$ such that 
\begin{equation*}
\underset{\xi \rightarrow 0}{\lim }\left\Vert \frac{\rho _{\xi }-\rho _{0}}{%
\xi }-\dot{\rho}\right\Vert _{W}=0
\end{equation*}%
then $R(z,L_{0,T_{0}}):V_{W}\rightarrow $ $V_{W}$ is a continuous operator
and we have the following Linear Response formula 
\begin{equation}
\lim_{\delta \rightarrow 0}\left\Vert \frac{f_{\delta }-f_{0}}{\delta }%
-R(1,L_{0})\dot{\rho}\hat{\ast}L_{T}(f)\right\Vert _{W}=0.  \label{finalres2}
\end{equation}
\end{description}
\end{corollary}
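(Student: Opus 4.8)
The proof is an application of Theorem \ref{th:linearresponse}: for each of the two perturbation families the plan is to instantiate the abstract framework with $L^{1}$ as the intermediate space and $BV$ as the strong space, to choose a weak space $(B_{w},\|\cdot\|_{w})$ suited to the perturbation at hand, to verify $(LR0)$--$(LR3)$ from the results already established, and then to read off \eqref{finalres1} and \eqref{finalres2} by substituting the explicit derivative operators into the general formula \eqref{linresp}. Throughout, $(LR1)$ is exactly the standing mixing hypothesis, $(LR0)$ is supplied by Lemma \ref{lem:fixedpoint}, $(LR2)$ by Corollary \ref{regcor}, and $(LR3)$ by Proposition \ref{derivop}. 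So the work is the careful alignment of hypotheses rather than any fresh estimate.

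For item a I would take $B_{w}=L^{1}$ and $\|\cdot\|_{w}=\|\cdot\|_{1}$, so that the structural conditions $BS\supseteq B_{w}\supseteq L^{1}$ and $\|\cdot\|_{w}\le\|\cdot\|_{1}$ are trivial. Since only $T_{\delta}$ varies while the kernel $\rho_{0}$ is fixed, Lemma \ref{lem:fixedpoint} gives fixed probability densities $f_{\delta}\in BV$ whose variation is controlled by $\mathrm{Var}(\rho_{0})$, hence uniformly bounded, which is $(LR0)$; Corollary \ref{regcor} gives $L_{0,T_{0}}:L^{1}\to BV$, which a fortiori yields $L_{0,T_{0}}:L^{1}\to L^{1}$, i.e. $(LR2)$ for this choice of $B_{w}$. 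The first inequality of Proposition \ref{derivop} item a, $\|L_{0,T_{\delta}}f-L_{0,T_{0}}f\|_{1}\le\delta\,\mathrm{Var}(\rho_{0})\|f\|_{1}$, furnishes at once both operator bounds of $(LR3)$ with $K=\mathrm{Var}(\rho_{0})$: the difference lands in $V$ because $L_{0,T_{\delta}}$ and $L_{0,T_{0}}$ are Markov operators and their difference annihilates the average. The limit \eqref{eq:noiseresponse1} is precisely \eqref{derivativeoperator} with $\dot{L}f_{0}=\rho_{0}\hat{\ast}(-L_{T_{0}}(f)S)'$; one checks this lies in $V$ since $S(0)=S(1)=0$ forces $L_{T_{0}}(f)S$ to vanish at the endpoints, so its derivative integrates to zero and $\hat{\ast}$ preserves zero mass. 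Theorem \ref{th:linearresponse} then delivers \eqref{finalres1}.

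For item b the only change is to take $\|\cdot\|_{w}=\|\cdot\|_{W}$, the norm \eqref{Wass}, positioning $B_{w}$ between $BS$ and $L^{1}$ via $\|\cdot\|_{W}\le\|\cdot\|_{1}\le\|\cdot\|_{BV}$. Again $(LR0)$ follows from Lemma \ref{lem:fixedpoint}, now requiring a uniform bound on $\mathrm{Var}(\rho_{\xi})$ for the noise family; the genuinely new input is the $(LR2)$ regularization $L_{0,T_{0}}:(B_{w},\|\cdot\|_{W})\to L^{1}$, which is the smoothing effect of the convolution $\hat{\ast}$ established in Corollary \ref{regcor}. The two perturbation bounds of $(LR3)$ come from $\|L_{\xi,T_{0}}f-L_{0,T_{0}}f\|_{W}\le\|L_{\xi,T_{0}}f-L_{0,T_{0}}f\|_{1}\le C\xi\|f\|_{1}$ (Proposition \ref{derivop} item b), and the limit \eqref{13.5} identifies $\dot{L}f_{0}=\dot{\rho}\,\hat{\ast}L_{T}(f)\in V_{W}$, the zero-mass property following from $\dot{\rho}$ being a Wasserstein limit of differences of probability densities. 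Theorem \ref{th:linearresponse} then gives \eqref{finalres2}, and the continuity of $R(1,L_{0,T_{0}})$ on the zero-average space comes for free from its conclusion.

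The argument is essentially bookkeeping, so the hard part will not be any single estimate but the alignment of spaces, and I expect two points to need the most care. First, in item b the regularization from the Wasserstein norm into $L^{1}$ is indispensable and is exactly where the presence of noise does the work that hyperbolicity would do elsewhere; without it the weak space would not feed into the compactness machinery behind the passage from $L^{1}$ to $BV$. Second, the uniform variation bound on $\rho_{\xi}$ demanded by $(LR0)$ must be extracted from the standing assumptions on the noise family; should these prove insufficient, I would fall back on the relaxed hypothesis $\|f_{\delta}\|_{BV}=o(\delta^{-1})$ permitted by the remark following Theorem \ref{th:linearresponse}. With these two points secured, the response formulas are immediate substitutions into \eqref{linresp}.
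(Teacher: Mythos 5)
Your proposal is correct and follows essentially the same route as the paper, which obtains the corollary precisely by feeding Lemma \ref{lem:fixedpoint} (for $(LR0)$), Corollary \ref{regcor} (for $(LR2)$, with $B_{w}=L^{1}$ in item a and the Wasserstein norm in item b, the latter being where the Lipschitz hypothesis on $T_{0}$ enters), and Proposition \ref{derivop} (for $(LR3)$) into Theorem \ref{th:linearresponse}. Your explicit flagging of the need for a uniform bound on $\mathrm{Var}(\rho_{\xi})$ in item b, with the fallback to the $o(\delta^{-1})$ relaxation of $(LR0)$, is a point the paper passes over silently and is handled correctly here.
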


\begin{remark}
\label{rem:comparison} We now would like to draw {the comparison between the
above results, which are the main tools we are using in our applications,
and the most similar results available in literature}. 
In \cite{BRS}, {the authors treat the case of uniformly expanding random
dynamical systems satisfying some uniformity condition and some case of non
uniformly expanding random ones which can be studied by inducing on random
expanding ones.} The general results and the applications are built on
strong assumptions on the deterministic part of the dynamics, while weak
assumptions are considered for the randomness. {\ }%
{As a result, the analogous of equations }$(${\ref{finalres1}}$)${\ and }$(${%
\ref{finalres2}}$)${\ give convergence to an element in $C^{1}$ in the
expanding case \cite[Theorem 2.3]{BRS} and roughly $C^{0}$ in the induced
case \cite[Theorem 3.4]{BRS}. Both statements rely on a uniform spectral gap
for the perturbed operators (see \cite[Remark 2.2, Proposition 6.2]{BRS}).} {%
In this work, in some sense we impose strong assumptions on the randomness
for the unperturbed operator but we can work with very weak assumptions on
its deterministic part. Furthermore, note that in Item (LR1) and (LR2) of
Theorem \ref{th:linearresponse} we ask mixing and regularization only on the
unperturbed operator} 

{Controlling the linear response in a weak sense is instead the point of
view of \cite{HM}. In a nutshell, Hairer and Majda prove linear response for
a Markov process (see \cite[Theorem 2.3]{HM}) in a weak sense showing that
the average of a smooth observable changes smoothly when the process is
perturbed in a suitable way and providing a formula for the derivative. The
functional analytic setting is based on weighted $C^{k}$ norms constructed
by the closure of the space of smooth observables (renormalizing the growth
of the observable), such norms are then used to compare (weakly) the
distance between measures. The general setting is tailored to stochastic
differential equations, and a part of the paper is devoted to the proof that
important examples of this kind satisfy the assumptions of the general
perturbation theorem. This approach also exploits implicitly the
regularizing effect of the noise, like in our paper, but the technical
setting is quite different. We focused on assumptions which can be easily
verified in what we called \emph{discrete time systems with additive noise}.
Our functional setting gives control on the linear response in stronger or
weaker topologies \ ranging from $L^{1}$ to the Wasserstein distance in the
applications we present (see Section \ref{sec:finalmain}) the choice of the
right norm is made according on the kind of perturbation considered (the
right norm to be considered is determined by the norm in which the
"derivative operator converges" as we see in Theorem \ref{th:linearresponse}
and proposition \ref{derivop}).}

\end{remark}

In Section \ref{sec:finalmain} We apply the result above in a variety of
examples. We start with a class of topological mixing maps with additive
noise including the mixing piecewise expanding case. In Section \ref{PWappl}
showing how the needed estimates can be achieved and the linear response
result applied to this class.

In section \ref{slat} we consider random rotations. In this case, it is
known (see \cite{Gpre}) that for a rotation of the circle the linear
response does not hold. However once some additive noise is added the system
shows linear response: we produce the required formula.

In section \ref{sec:final}, these results are applied to a model of
Belosuv-Zabotinsky chemical reaction. This is a random dynamical system with
additive noise and whose deterministic part is a map having contracting and
expanding regions. The understanding of the deterministic dynamics of this
map is out of reach with current techniques, on the other hand the system
with noise can be studied rigorously with a computer aided proof (see \cite%
{GMN}) showing that it satisfies the properties required to use the above
statements. In particular we show linear response under suitable
perturbations of the map.

In Section \ref{detappl} we consider an example of a perturbation which is
outside the case of maps with additive noise. We perturb a map with additive
noise by adding the possibility of applying a deterministic map with a small
probability. Here the perturbed operators are not regularizing. We see that
even in this case we get linear response with mild assumptions on the
deterministic part of the dynamics.

Finally, in Section \ref{sec:control} we consider the inverse, control
problem of finding which perturbation of a system results in a wanted
response. We discuss some aspects of this inverse problem, considering it
for the systems with additive noise and allowed perturbations considered in
the paper. We show explicitly solutions to the problem in this case.

\section{Proof of Theorem \protect\ref{th:linearresponse}\label{general}}

In this section we prove the general statement about linear response of
fixed points of transfer operators presented at Theorem \ref%
{th:linearresponse}. As remarked in the introduction, a family of operators {%
might} fail to have linear response, sometime because of lack of
hyperbolicity, sometime because of the non smoothness of the kind of
perturbation which is considered along the family. In particular this is
related to the type of convergence of the derivative operator 
\begin{equation}
\dot{L}f=\lim_{\delta \rightarrow 0}\frac{(L_{\delta }-L_{0})}{\delta }f
\label{ldot1}
\end{equation}%
(see Remark \ref{rrm6}). We remark, as an example, that for deterministic
systems and related transfer operators, if the system is perturbed by moving
its critical values or discontinuities, this will result in a bad
perturbation of the associated transfer operators, and the limit %
\eqref{ldot1} will not converge, unless we consider very coarse topologies. {%
We consider perturbations of the system such that the derivative operator
converges in a weaker or stronger sense i.e. the norm $\Vert \cdot \Vert
_{w} $. Such convergence implies weaker or stronger linear response
statements: we will use this flexibility in the next sections to get
applications in different contexts.}

Let us recall some notations. Mixing can be seen as the convergence to $0$
of iterates of zero average measures, for this we consider the speed of
contraction to zero of iterates of measures in the \ {spaces $V$ and $V_{w}$}
(see Definition \ref{v}). {In the following, with a small abuse of notation,
we will write $\mu \in L^{1}[0,1]$ or $\mu \in BV[0,1]$ to say that $\mu $
has a density $f_{\mu }\in L^{1}[0,1]$ or $f_{\mu }\in BV[0,1]$.} Recall
that we denote by $R(z,L)$ the resolvent operator (see (\ref{d3})). \ Before
the proof of Theorem \ref{th:linearresponse} we need a lemma showing that a
qualitative mixing assumption as in Item $1)$ together with the
regularization supposed at item $2)$ is sufficient to get the exponential
contraction for the space of zero average $L^{1}$ densities.

\begin{definition}
We say that a transfer operator $L$ has exponential contraction of the zero
average space on $L^{1}$\emph{, }if there are $C\geq 0,\lambda <0$ such that 
$\forall g\in V$%
\begin{equation}
\Vert L^{n}(g)\Vert _{1}\leq Ce^{\lambda n}\Vert g\Vert _{1}.  \label{equil}
\end{equation}
\end{definition}

\begin{lemma}
\label{limmin}(mixing and regularization imply exponential contraction) If $%
L:L^{1}([0,1])\rightarrow BV([0,1])$ is continuous and for every $g$ of
bounded variation with $\int_{I}g~dm=0$, 
\begin{equation*}
\lim_{n\rightarrow \infty }\Vert L^{n}g\Vert _{1}=0
\end{equation*}%
then there is $\overline{n}$ such that 
\begin{equation*}
\underset{\|f\|_{1}\leq 1,f\in V}{\sup }\left\Vert L^{\overline{n}%
}f\right\Vert _{1}<1
\end{equation*}%
and then $L$ has exponential contraction of the zero average space on $%
L^{1}. $
\end{lemma}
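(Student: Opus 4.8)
The plan is to argue in two stages: first produce a single finite time $\bar n$ at which $L^{\bar n}$ strictly contracts the unit ball of $V$ in $L^1$, and then bootstrap this one strict contraction into the exponential bound \eqref{equil}. For the first stage I would start from the regularization hypothesis: $L$ sends the unit ball $B:=\{f\in V:\Vert f\Vert_1\le 1\}$ into the $BV$-bounded set $K:=\{h\in V:\Vert h\Vert_{BV}\le \Vert L\Vert_{L^1\to BV}\}$, since $L$ preserves zero average (it is a transfer operator) and $\Vert Lf\Vert_{BV}\le \Vert L\Vert_{L^1\to BV}\Vert f\Vert_1$. By the Helly selection theorem bounded subsets of $BV$ are relatively compact in $L^1$, and using lower semicontinuity of the total variation under $L^1$ convergence one checks $K$ is $L^1$-closed, so $K$ is a \emph{compact} subset of $L^1$.

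On $K$ the functions $\phi_n(h):=\Vert L^n h\Vert_1$ are continuous (each $L^n$ is bounded on $L^1$ because $L\colon L^1\to BV\hookrightarrow L^1$), they decrease in $n$ because a transfer operator is a weak contraction on $L^1$, i.e. $\Vert L\Vert_{L^1\to L^1}\le 1$, and the mixing hypothesis gives $\phi_n(h)\to 0$ pointwise on $K$ (every $h\in K$ is of bounded variation with zero average). The main obstacle is precisely upgrading this pointwise convergence to a uniform one over $K$, since pointwise convergence of continuous functions on a compact set need not be uniform. This is resolved by \emph{Dini's theorem}, whose hypotheses — compact domain, continuous $\phi_n$, monotone decrease to the continuous limit $0$ — are all met; it yields $\sup_{h\in K}\Vert L^n h\Vert_1\to 0$. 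Choosing $m$ with $\sup_{h\in K}\Vert L^m h\Vert_1<1$ and recalling $L(B)\subseteq K$, we obtain $\sup_{f\in B}\Vert L^{m+1}f\Vert_1<1$; setting $\bar n:=m+1$ this is exactly the strict-contraction claim.

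For the second stage, let $\alpha:=\sup_{\Vert f\Vert_1\le 1,\,f\in V}\Vert L^{\bar n}f\Vert_1<1$, so that $\Vert L^{\bar n}g\Vert_1\le \alpha\Vert g\Vert_1$ for all $g\in V$ by homogeneity, and iterate using $L(V)\subseteq V$ to get $\Vert L^{q\bar n}g\Vert_1\le \alpha^{q}\Vert g\Vert_1$. Writing $n=q\bar n+s$ with $0\le s<\bar n$ and using $\Vert L^{s}\Vert_{L^1\to L^1}\le 1$ gives $\Vert L^{n}g\Vert_1\le \alpha^{q}\Vert g\Vert_1$. Since $q\ge n/\bar n-1$ and $0<\alpha<1$, this is $\Vert L^{n}g\Vert_1\le \alpha^{-1}e^{(\log\alpha/\bar n)\,n}\Vert g\Vert_1$, which is \eqref{equil} with $C=\alpha^{-1}\ge 0$ and $\lambda=\log\alpha/\bar n<0$. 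The only non-routine ingredient is the Dini upgrade in the first stage; everything else is the standard Euclidean-division estimate together with the elementary facts that a transfer operator fixes $V$ and is an $L^1$ weak contraction.
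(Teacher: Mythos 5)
Your proof is correct and follows essentially the same route as the paper: both regularize the $L^{1}$-unit ball of $V$ into a $BV$-bounded (hence, by Helly, relatively $L^{1}$-compact) set, upgrade the pointwise decay given by the mixing hypothesis to a uniform bound at some finite time $\bar{n}$, and then bootstrap to exponential contraction by Euclidean division of $n$ by $\bar{n}$. The only divergence is in the uniformization step, where the paper covers the $BV$-ball by a finite $\varepsilon$-net and transfers the decay from the net points using the $L^{1}$ weak contraction, while you invoke Dini's theorem (relying on the monotonicity of $n\mapsto \Vert L^{n}h\Vert _{1}$, which comes from the same weak-contraction property); the two devices are interchangeable here and rest on the same implicit Markov assumption on $L$.
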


\begin{proof}[Proof of Lemma \protect\ref{limmin}]
{\ By hypothesis, $L:L^{1}\rightarrow BV$ is continuous, thus exists $l\in {%
\mathbb{R}}$ such that $\Vert L\Vert _{L^{1}\rightarrow BV}\leq l$. Since $V$
is preserved by the Markov assumption on $L$, for $f\in V\subset L^{1}$ such
that $\Vert f\Vert _{1}\leq 1$ we have $Lf\in V$ and $\Vert Lf\Vert
_{BV}\leq l$. } By the compact immersion of $BV$ functions in $L^{1}$
(Helly's selection principle) there is a finite $\varepsilon $ net in $L^{1}$
of bounded variation functions $\{g_{i}\,|\,g_{i}\in V\}$ covering $%
B_{BV}:=\{f\in BV\cap V,||f||_{BV}\leq l\}$, i.e. $\forall g\in
B_{BV},\exists \,g_{i}~s.t.~\Vert g-g_{i}\Vert _{1}\leq \varepsilon $.

By the assumptions $\forall i,$ $\lim_{n\rightarrow \infty }\Vert
L^{n}g_{i}\Vert _{1}=0.$ Thus there exists $\overline{n}$ such that $\forall
i,$ $\Vert L^{\overline{n}}g_{i}\Vert _{1}<\frac{1}{10}.$ Consider $g\in
B_{BV}$, since $L$ is a weak contraction in the $L^{1}$ norm$\ \Vert L^{%
\overline{n}}g\Vert _{1}\leq \Vert L^{\overline{n}}g-L^{\overline{n}%
}g_{i}+L^{\overline{n}}g_{i}\Vert _{1}<\varepsilon +\frac{1}{10}$. Because
of the regularization assumption, for every $h$ in $B_{1}:=\{f\in
V,||f||\leq 1\},$ then $\Vert L^{\overline{n}+1}h\Vert _{1}<\frac{1}{10}%
+\varepsilon $, proving the statement. Indeed, for every $n$ we may write $%
n=(\overline{n}+1)m+q$ with $q=0,\ldots \overline{n}-1$. In this case it is
sufficient to note that 
\begin{equation*}
\Vert L^{\overline{n}m+q}h\Vert _{1}\leq \Vert L^{\overline{n}}\Vert
_{L^{1}\rightarrow L^{1}}^{m}\Vert h\Vert _{1}\leq (\frac{1}{10}+\varepsilon
)^{m}\Vert g\Vert _{1}\leq \lbrack (\frac{1}{10}+\epsilon )^{\frac{\lambda }{%
\overline{n}}}]^{m\overline{n}}\Vert g\Vert _{1}\leq Ce^{\tilde{\lambda}%
n}\Vert g\Vert _{1}
\end{equation*}%
for some $C\geq 0,\lambda <0$.
\end{proof}

\begin{corollary}
\label{limmin2}If $L$ is regularizing as above, the exponential contraction
of $V$ in $L^{1}$ implies an exponential contraction on $BV$ too.%
\begin{equation*}
\Vert L^{n}(g)\Vert _{BV[0,1]}\leq C_{2}e^{\lambda n}\Vert g\Vert
_{L^{1}[0,1]}\leq C_{2}e^{\lambda n}\Vert g\Vert _{BV}.
\end{equation*}
\end{corollary}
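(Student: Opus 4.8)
The plan is to peel off a single application of $L$ and use it as the regularizing step, while the remaining $n-1$ iterates are controlled by the $L^{1}$ contraction of the zero average space. First I would record that, since $L:L^{1}\rightarrow BV$ is continuous, there is a constant $l\geq 0$ with $\Vert L\Vert _{L^{1}\rightarrow BV}\leq l$, that is $\Vert Lf\Vert _{BV}\leq l\Vert f\Vert _{1}$ for every $f\in L^{1}$. This single bound is the only new ingredient needed beyond the hypotheses.

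Next, for $n\geq 1$ and $g\in V$ I would factor $L^{n}g=L(L^{n-1}g)$ and apply the regularizing bound to the outermost operator, obtaining $\Vert L^{n}g\Vert _{BV}\leq l\,\Vert L^{n-1}g\Vert _{1}$. The point here is that the Markov property preserves the zero average space $V$ (since $\int Lf\,dm=\int f\,dm$), so $L^{n-1}g$ again lies in $V$ and the assumed exponential contraction \eqref{equil} may be applied to it, giving $\Vert L^{n-1}g\Vert _{1}\leq Ce^{\lambda (n-1)}\Vert g\Vert _{1}$.

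Combining the two estimates yields $\Vert L^{n}g\Vert _{BV}\leq lCe^{\lambda (n-1)}\Vert g\Vert _{1}=(lCe^{-\lambda })\,e^{\lambda n}\Vert g\Vert _{1}$, so the claim follows with $C_{2}:=lCe^{-\lambda }$; the second inequality in the statement is then immediate from $\Vert g\Vert _{1}\leq \Vert g\Vert _{BV}$, which holds by the very definition of the $BV$ norm in Definition \ref{def:spaces}.

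The argument is essentially routine, and the only point requiring a little care — and in effect the sole obstacle — is the bookkeeping of the exponent: stripping one iterate off shifts $e^{\lambda n}$ to $e^{\lambda (n-1)}$, which is harmless because the extra factor $e^{-\lambda }$ is absorbed into $C_{2}$. One should also note that the estimate is meant for $n\geq 1$, since for $n=0$ no smoothing has occurred and no $BV$ bound in terms of $\Vert g\Vert _{1}$ can hold in general.
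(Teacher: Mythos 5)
Your proof is correct and follows exactly the paper's argument: peel off one application of $L$, use the continuity $L:L^{1}\rightarrow BV$ as the regularizing step, and control the remaining $n-1$ iterates by the exponential contraction on $V$ in $L^{1}$, absorbing the shift $e^{\lambda(n-1)}$ into the constant. Your additional remarks (that the Markov property keeps $L^{n-1}g$ in $V$, and that the case $n=0$ is excluded) are sensible bookkeeping that the paper leaves implicit.
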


\begin{proof}
Since $L:L^{1}\rightarrow $ $BV[0,1]$ is continuous $\Vert L^{n}(g)\Vert
_{BV[0,1]}\leq K\Vert L^{n-1}(g)\Vert _{1}\leq KCe^{\lambda (n-1)}\Vert
g\Vert _{L^{1}[0,1]}$ by the exponential contraction in $L^{1}$.
\end{proof}

Now let us consider again the family of operators $L_{\delta }$; let us
recall we suppose that each operator $L_{\delta }$ has a fixed probability
measure in $BV[0,1]$; we are ready to prove the main general statement.

\begin{proof}[Proof of Theorem \protect\ref{th:linearresponse}]
Since $L_{0}$ is regularizing we can apply Lemma \ref{limmin} and deduce
that $L_{0}$ has exponential contraction {of the zero average space} on $%
L^{1}$. Let us first prove that under the assumptions the system has strong
statistical stability in $L^{1}$, that is 
\begin{equation}
\lim_{\delta \rightarrow 0}\|f_{\delta }-f_{0}\|_{1}=0.
\end{equation}

Let us consider for any given $\delta $ a probability measure $f_{\delta }$
such that $L_{\delta }f_{\delta }=f_{\delta }$. Thus%
\begin{eqnarray*}
\Vert f_{\delta }-f_{0}\Vert _{1} &\leq &\|L_{\delta }^{N}f_{\delta
}-L_{0}^{N}f_{0}\|_{1} \\
&\leq &\|L_{\delta }^{N}f_{\delta }-L_{0}^{N}f_{\delta
}\|_{1}+\|L_{0}^{N}f_{\delta }-L_{0}^{N}f_{0}\|_{1}
\end{eqnarray*}%
Since $f_{\delta },f_{0}$ are probability measures, $f_{\delta }-f_{0}\in V$
and $\|f_{\delta }-f_{0}\|_{1}\leq 2$ then we have 
\begin{equation*}
\Vert f_{\delta }-f_{0}\Vert _{1}\leq 2Ce^{\lambda n}+\|L_{\delta
}^{N}f_{\delta }-L_{0}^{N}f_{\delta }\|_{1}.
\end{equation*}%
Next we rewrite the operator sum $L_{0}^{n}-L_{\delta }^{n}$ telescopically 
\begin{equation*}
(L_{0}^{N}-L_{\delta }^{N})=\sum_{k=1}^{N}L_{0}^{N-k}(L_{0}-L_{\delta
})L_{\delta }^{k-1}
\end{equation*}%
so that 
\begin{eqnarray*}
-(L_{\delta }^{N}-L_{0}^{N})f_{\delta }
&=&-\sum_{k=1}^{N}L_{0}^{N-k}(L_{0}-L_{\delta })L_{\delta }^{k-1}f_{\delta }
\\
&=&-\sum_{k=1}^{N}L_{0}^{N-k}(L_{0}-L_{\delta })f_{\delta }.
\end{eqnarray*}%
Let us set 
\begin{equation}
\varepsilon _{\delta }:=\|(L_{0}-L_{\delta })f_{\delta }\|_{1}.  \label{gg}
\end{equation}%
The assumption that $\|f_{\delta }\|_{BV}\leq M,$ together with the small
perturbation assumption (LR3) imply that $\varepsilon _{\delta }\rightarrow
0 $ as $\delta \rightarrow 0.$ {Thus}, recalling that $L_{0}$ has
exponential contraction, 
\begin{equation}
\|f_{\delta }-f_{0}\|_{1}\leq 2Ce^{\lambda N}+\varepsilon _{\delta }N.
\label{eq:strongstability}
\end{equation}

Choosing first $N$ big enough and then $\delta $ small enough we can make $%
\|f_{\delta }-f_{0}\|_{1}$ as small as wanted, proving the stability in $%
L^{1}$.

Let us now consider $R(1,L_{0})$ as an operator$:V_{w}\rightarrow V_{w}$.
Let $f\in V_{w}$ then, by \eqref{d3}, $R(1,L_{0})f=f+\sum_{1}^{\infty
}L_{0}^{i}f$. By the regularization assumption $L_{0}^{i}f\in L^{1}$ for $%
i\geq 1$. Since $L_{0}$ is exponentially contracting on $L^{1}$ and $%
\sum_{1}^{\infty }Ce^{\lambda n}<\infty ,$ the sum $\sum_{1}^{\infty
}L_{0}^{i}f$ \ converges in $V$ with respect to the $L^{1}$ norm, and then
on the weaker norm in $V_{w}\supseteq V$. Thus $R(1,L_{0})={\mathds{1}+}%
\sum_{1}^{\infty }L_{0}^{i}$ is a continuous operator $V_{w}\rightarrow
V_{w} $. Remark that since $\dot{L}f_{0}\in V_{w},$ the resolvent can be
computed at $\dot{L}f_{0}$ . Now we are ready to prove the main statement.
By using that $f_{0}$ and $f_{\delta }$ are fixed points of their respective
operators we obtain that 
\begin{equation*}
({\mathds{1}}-L_{0})\frac{f_{\delta }-f_{0}}{\delta }=\frac{1}{\delta }%
(L_{\delta }-L_{0})f_{\delta }.
\end{equation*}%
By applying the resolvent both sides 
\begin{eqnarray*}
R(1,L_{0})({\mathds{1}}-L_{0})\frac{f_{\delta }-f_{0}}{\delta } &=&R(1,L_{0})%
\frac{L_{\delta }-L_{0}}{\delta }f_{\delta } \\
&=&R(1,L_{0})\frac{L_{\delta }-L_{0}}{\delta }f_{0}+R(1,L_{0})\frac{%
L_{\delta }-L_{0}}{\delta }(f_{\delta }-f_{0})
\end{eqnarray*}%
we obtain that the left hand sides is equal to $\frac{1}{\delta }(f_{\delta
}-f_{0})$. Moreover, with respect to right hand side we observe that,
applying assumption $(LR3)$ eventually, as $\delta \rightarrow 0$%
\begin{equation*}
\left\Vert R(1,L_{0})\frac{L_{\delta }-L_{0}}{\delta }(f_{\delta
}-f_{0})\right\Vert _{w}\leq \Vert R(1,L_{0})\Vert _{V_{w}\rightarrow
V_{w}}K\Vert f_{\delta }-f_{0}\Vert _{1}
\end{equation*}%
which goes to zero thanks to \eqref{eq:strongstability}. Thus considering
the limit $\delta \rightarrow 0$ we are left with 
\begin{equation*}
\lim_{\delta \rightarrow 0}\frac{f_{\delta }-f_{0}}{\delta }=R(1,L_{0})\dot{L%
}f_{0}.
\end{equation*}%
converging in the $\Vert \cdot \Vert _{w}$ norm, which proves our claim.
\end{proof}

\begin{remark}
\label{weakLR0}By inspecting the proof, around \eqref{gg}, we have that the $%
\Vert f_{\delta }\Vert _{BV}\leq M$ assumption can be replaced by $\Vert
f_{\delta }\Vert _{BV}\leq o(\delta ^{-1})$ as $\delta \rightarrow 0$.
\end{remark}

\section{Sistems with additive noise and proof of Proposition \protect\ref%
{derivop} \label{sec:derivative}}

In this section we restrict to random dynamical systems with additive noise.
We consider a random dynamical system on the unit interval {defined by} the
composition of a Borel nonsingular map $T_{0}:[0,1]\rightarrow \lbrack 0,1]$
and a random i.i.d. noise, at each step distributed with a noise kernel $%
\rho _{0}\in BV$ as defined at beginning of Section \ref{adnois}.

We want to apply Theorem \ref{th:linearresponse} to the annealed transfer
operator $L_{0,T_{0}}(f)$ (defined in Section \ref{adnois}) associated to
these systems, for suitable perturbations. In this section the main
objective is to show that the assumption $(LR0)$ and $(LR2)$ of Theorem \ref%
{th:linearresponse} are satisfied by the kind of system we consider, and
study suitable perturbations of the systems, changing the map $T_{0}$ or the
noise kernel $\rho _{0}$ for which the assumption $(LR3)$ Theorem \ref%
{th:linearresponse} is satisfied i.e.: the perturbed operator $L_{\xi
,T_{\delta }}$ is a small perturbation of $L_{0,T_{0}}$ and $\dot{L}f_{0}$
exists in some suitable sense.

Now we show the necessary regularization properties of the boundary
reflecting convolution $\hat{\ast}$ (see Definition \ref{def:hatconv}).
Recall that given two measures $\mu ,\nu \in BS[{\mathbb{R}}]$ their
convolution is defined as (see \cite{Ru}) 
\begin{equation*}
\mu \ast \nu (A)=\int_{{\mathbb{R}}^{2}}{1}_{A}(x+y)d\mu (x)d\nu (y).
\end{equation*}%
which is obviously a measure. Observe that if $d\nu =fd\mu $ with $f\in
L^{1}(\mu )$ the above convolutions reads 
\begin{equation*}
(f\ast \mu )(x)=\int_{{\mathbb{R}}}f(x-y)d\mu (y)
\end{equation*}%
which gives a function and corresponds to an absolutely continuous measure
(one aspect of the regularizing action of the convolution). If both $\mu
,\nu $ are absolutely continuous with respect to Lebesgue i.e. $d\mu
(x)=f(x)dx$ and $d\nu (x)=g(x)dx$ with $f,g\in L^{1}({\mathbb{R}})$ then $%
d(\mu \ast \nu )(x)=(f\ast g)(x)dx$ as expected i.e. the density of the
convolution of the two measures is the convolution of the two densities. The
above properties allow us to mix and match measures in $BS({\mathbb{R}})$
and densities in $L^{1}({\mathbb{R}})$ as needed.

The following Lemma {shows finer} regularization properties of the
convolution $\hat{\ast}$.

\begin{lemma}
\label{convoocopy1} Let $f\in BS$ such that $f([0,1])=0$ and $g\in L^{1}$.
We have 
\begin{equation}
\Vert f\hat{\ast}g\Vert _{W}\leq \Vert f\Vert _{W}\cdot \Vert g\Vert _{1}.
\label{conv1}
\end{equation}%
Furthermore, let $f\in BS,$ s.t. $f([0,1])=0$, $g\in BV$ 
\begin{equation}
\Vert f\hat{\ast}g\Vert _{1}\leq 3\Vert f\Vert _{W}\cdot \Vert g\Vert _{BV}.
\label{conv2}
\end{equation}%
Furthermore, let $f\in L^{1},$ $g\in BV$%
\begin{equation}
\Vert f\hat{\ast}g\Vert _{BV}\leq 9\Vert f\Vert _{1}\cdot \Vert g\Vert _{BV}.
\label{conv3}
\end{equation}
\end{lemma}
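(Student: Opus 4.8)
The plan is to exploit the fact that all three bounds have the same shape: the boundary-reflecting convolution $f\hat{\ast}g=\pi_{\ast}(\hat{f}\ast\hat{g})$ is an ordinary convolution on $\bR$ of the truncations $\hat{f}=1_{[0,1]}f$, $\hat{g}=1_{[0,1]}g$, followed by the fold $\pi_{\ast}$. So I would first dispatch the fold once and for all, then reduce each inequality to a statement about the line convolution $\hat{f}\ast\hat{g}$. The map $\pi$ is $1$-Lipschitz (distance to the nearest even integer), which gives three reusable facts: (i) for any test function $\phi$ on $[0,1]$ with $\|\phi\|_{\infty}\le1$, $Lip(\phi)\le1$, the composite $\phi\circ\pi$ is admissible on $\bR$, so $\|\pi_{\ast}\mu\|_{W}\le\|\mu\|_{W}$; (ii) since $|\pi'|=1$ a.e. and a point $y\in[0,1]$ has its preimages inside $\mathrm{supp}(\hat f\ast\hat g)\subseteq[0,2]$ reduced to $\{y,2-y\}$, the folded density is $\tilde h(y)=h(y)+h(2-y)$, whence $\|\pi_{\ast}h\|_{1}\le\|h\|_{1}$ and $Var_{[0,1]}(\pi_{\ast}h)\le Var_{[0,1]}(h)+Var_{[1,2]}(h)\le Var_{\bR}(h)$.

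For \eqref{conv1} I would test $f\hat{\ast}g$ against an admissible $\phi$ and unfold, writing $\int_{[0,1]}\phi\,d(f\hat{\ast}g)=\iint \phi(\pi(x+y))\,d\hat{f}(x)\,\hat{g}(y)\,dy$. For each fixed $y$ the inner integrand $x\mapsto\phi(\pi(x+y))$ is $1$-Lipschitz and bounded by $1$, hence admissible, so the inner integral is $\le\|f\|_{W}$; integrating $|\hat{g}|$ produces the factor $\|g\|_{1}$, and taking the supremum over $\phi$ gives the claim.

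Inequalities \eqref{conv2} and \eqref{conv3} are the heart, and the mechanism that turns a weak norm on $f$ into an $L^{1}$ or $BV$ bound is to transfer a derivative from $f$ onto $g$ via the convolution identity $(DF)\ast\hat{g}=F\ast(D\hat{g})$. For \eqref{conv2}, let $F(x)=\hat{f}((-\infty,x])$ be the antiderivative of $\hat{f}$; because $f([0,1])=0$, $F$ is supported in $[0,1]$, and integration by parts identifies $\|f\|_{W}=\|F\|_{1}$ (on a length-one interval the constraints $\|\phi\|_{\infty}\le1$, $Lip(\phi)\le1$ are exactly dual to $\|F\|_{1}$). Writing $\hat{f}=DF$ gives $\hat{f}\ast\hat{g}=F\ast(D\hat{g})$, and Young's inequality for the convolution of the $L^{1}$ function $F$ with the finite measure $D\hat{g}$ yields $\|\hat{f}\ast\hat{g}\|_{1}\le\|F\|_{1}\,\|D\hat{g}\|_{TV}=\|f\|_{W}\,Var_{\bR}(\hat{g})$. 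Since $Var_{\bR}(\hat{g})=Var_{[0,1]}(g)+|g(0^{+})|+|g(1^{-})|\le 3\|g\|_{BV}$ (using $\|g\|_{\infty}\le\|g\|_{BV}$ for $BV$ functions on a unit interval), the fold bound gives \eqref{conv2}. For \eqref{conv3}, Young's inequality gives $\|\hat{f}\ast\hat{g}\|_{1}\le\|f\|_{1}\|g\|_{1}$ directly, while $D(\hat{f}\ast\hat{g})=\hat{f}\ast(D\hat{g})$ is again a convolution of $L^{1}$ with a finite measure, so $Var_{\bR}(\hat{f}\ast\hat{g})=\|\hat{f}\ast D\hat{g}\|_{1}\le\|f\|_{1}\,Var_{\bR}(\hat{g})\le 3\|f\|_{1}\|g\|_{BV}$; adding the $L^{1}$ and variation contributions and passing through the fold gives the stated bound.

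The hard part will be \eqref{conv2}: establishing the antiderivative identity $\|f\|_{W}=\|F\|_{1}$ and cleanly transferring the derivative through $(DF)\ast\hat{g}=F\ast(D\hat{g})$, while keeping track of the two endpoint jumps of $\hat{g}=1_{[0,1]}g$ that inflate $Var_{\bR}(\hat{g})$; these jumps, together with the reflection step, are the source of the constants $3$ and $9$. I would also record that $\hat{f}\ast\hat{g}$ is continuous (convolution of an $L^{1}$ function with a bounded compactly supported one), so the fold creates no spurious boundary jumps and $Var_{[0,1]}(\pi_{\ast}h)$ is genuinely controlled by $Var_{\bR}(h)$.
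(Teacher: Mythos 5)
Your proposal is correct, and for the central inequality \eqref{conv2} it takes a genuinely different route from the paper. For \eqref{conv1} and \eqref{conv3} the two arguments essentially coincide (duality plus Fubini for the Wasserstein bound; Young-type estimates plus fold bounds, with $Var_{\mathbb{R}}(\hat{g})\leq 3\Vert g\Vert _{BV}$ accounting for the endpoint jumps, for the $BV$ bound), although you avoid the paper's approximation step in \eqref{conv1} by pairing the test function $\phi (\pi (x+y))$ directly with the measure $\hat{f}$. The real divergence is in \eqref{conv2}: the paper mollifies both $\hat{f}$ (in $\Vert \cdot \Vert _{W}$) and $\hat{g}$ (in $L^{1}$, keeping the $BV$ norm), writes $\Vert f_{\epsilon }\ast g_{\varepsilon }\Vert _{1}$ by duality against $\Vert h\Vert _{\infty }\leq 1$, and integrates by parts so that the primitive $s(t)=\int_{0}^{t}h$ becomes a $1$-Lipschitz function paired with the translates $f_{\epsilon }(t-r)$; you instead put the antiderivative on the other side, replacing $\hat{f}$ by $F(x)=\hat{f}((-\infty ,x])$, using $f([0,1])=0$ to get $\mathrm{supp}\,F\subseteq \lbrack 0,1]$ and the duality $\Vert F\Vert _{1}=\Vert f\Vert _{W}$ (valid because on a unit-length interval the constraint $\Vert \phi \Vert _{\infty }\leq 1$ costs nothing once a constant is subtracted, which is harmless against a zero-mass measure), and then transferring the derivative through $(DF)\ast \hat{g}=F\ast (D\hat{g})$ followed by Young's inequality for the convolution of an $L^{1}$ function with a finite measure --- exactly the Rudin estimate the paper itself invokes elsewhere. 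The two proofs are dual to one another; yours is cleaner in that it needs no $\varepsilon $-approximations and isolates $\Vert F\Vert _{1}=\Vert f\Vert _{W}$ as the precise point where the zero-average hypothesis and the unit length of the interval enter, while the paper's stays closer to the definition of $\Vert \cdot \Vert _{W}$ at the price of a double limit. Your constants ($3$ in \eqref{conv2}, $4\leq 9$ in \eqref{conv3}) are consistent with, and in the last case slightly sharper than, the stated ones.
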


\begin{proof}
It is well known that $\pi _{\ast }$ is a weak contraction with respect to
the $L^{1}$ norm \ Let us show that $\pi _{\ast }$ is a weak contraction
with respect to the $\Vert \cdot \Vert _{W}$ norm. If $f\in BS[0,1],$%
\begin{equation*}
\Vert \pi _{\ast }(f)\Vert _{W}={\underset{\Vert g\Vert _{\infty }\leq
1,~Lip(g)\leq 1}{\sup }}\int_{0}^{1}g(x)d(\pi _{\ast }(f)) \\
={\underset{\Vert g\Vert _{\infty }\leq 1,~Lip(g)\leq 1}{\sup }}%
\int_{0}^{1}g(\pi (x))d(f)
\end{equation*}%
since $g\circ \pi $ is $1$-Lipschitz because $\pi $ is a weak contraction
with respect to the Euclidean distance on $\mathbb{R}$, 
\begin{equation*}
{\underset{\Vert g\Vert _{\infty }\leq 1,~Lip(g)\leq 1}{\sup }}%
\int_{0}^{1}g(\pi (x))d(f)\leq \|f\|_{W}.
\end{equation*}

To prove \ref{conv1}, let us consider $f\in L^{1}$ and $g\in L^{1}$. For
these measures we observe that 
\begin{eqnarray*}
\Vert f\hat{\ast}g\Vert _{W} &\leq &\Vert \hat{f}\ast \hat{g}\Vert _{W}=%
\underset{\Vert h\Vert _{\infty }\leq 1,~Lip(h)\leq 1}{\sup }\left\vert
\int_{{\mathbb{R}}}h(t)~(\hat{f}\ast \hat{g})(t)dt\right\vert \\
&=&\underset{\Vert h\Vert _{\infty }\leq 1,~Lip(h)\leq 1}{\sup }\left\vert
\int_{\mathbb{R}}h(t)\int_{\mathbb{R}}\hat{f}(t-\tau )\hat{g}(\tau )d\tau
\,dt\right\vert \\
&\leq &\underset{\Vert h\Vert _{\infty }\leq 1,~Lip(h)\leq 1}{\sup }\int_{%
\mathbb{R}}\left\vert \int_{{\mathbb{R}}}h(t)\hat{f}(t-\tau )dt\right\vert
~\left\vert \hat{g}(\tau )\right\vert d\tau \, \\
&\leq &\int_{\mathbb{R}}\Vert \hat{f}\Vert _{W}~|\hat{g}(\tau )|d\tau \leq
\Vert f\Vert _{W}\cdot \Vert g\Vert _{1}.
\end{eqnarray*}

Now the proof can be extended to the general case where $f\in BS[0,1]$ and $%
g\in L^{1}$ by approximation. First, let us consider $f_{\epsilon }\in L^{1}$
such that $\|f_{\epsilon }-\hat{f} \|_{W}\leq $ $\epsilon $ and $g_{\epsilon
}$ a Lipschitz function such that $\Vert g_{\varepsilon }-\hat{g}\Vert
_{1}\leq \varepsilon $. Then 
\begin{equation*}
\Vert \hat{f}\ast g_{\epsilon }\Vert _{W}\leq \Vert (\hat{f}-f_{\epsilon
})\ast g_{\varepsilon }\Vert _{W}+\Vert f_{\epsilon }\ast g_{\varepsilon
}\Vert _{W}.
\end{equation*}%
Since $g_{\varepsilon }$ is Lipschitz $\Vert (\hat{f}-f_{\epsilon })\ast
g_{\varepsilon }\Vert _{W}$ can be made as small as wanted as $\epsilon
\rightarrow 0$ and the inequality is proved for this case. Now let us remark
that by [\cite{Ru}, Theorem 1.3.2] for each finite Borel measure $f$ and $%
g\in L^{1}$ $\|\hat{f}\ast g\|_{1}\leq \|f\|_{TV} \|g\|_{1}$ where $%
\|f\|_{TV}$ is the total variation of $f$, this shows that in the above
reasoning we can also take the limit for $\varepsilon \rightarrow 0$ since%
\begin{equation*}
\Vert \hat{f}\ast \hat{g}\Vert _{W}\leq \Vert \hat{f}\ast (\hat{g}%
-g_{\varepsilon })\Vert _{W}+\Vert \hat{f}\ast g_{\varepsilon }\Vert _{W}.
\end{equation*}%
and $\Vert \hat{f}\ast \hat{g}-g_{\varepsilon }\Vert _{W}\leq \|\hat{f}%
\|_{TV} \|\hat{g}-g_{\varepsilon }\|_{1}\rightarrow 0.$ Thus by
approximation the statement is proved also for $f\in BS$ and $g\in L^{1}$.

To prove \eqref{conv2} we have to work a little bit harder. Let $g\in BV$,
remark that $\Vert \hat{g}\Vert _{BV}\leq 3\Vert g\Vert _{BV}$ due to the
discontinuity at the borders of $[0,1]$ and to the fact that $\Vert g\Vert
_{\infty }\leq \Vert g\Vert _{BV}$. Consider a $C^{1}$ function $%
g_{\varepsilon }\in V$ with compact support such that $\Vert g_{\varepsilon
}-\hat{g}\Vert _{1}\leq \varepsilon $ and $\Vert g_{\varepsilon }\Vert
_{BV}=\Vert \hat{g}\Vert _{BV}$, then $\Vert f\hat{\ast}g\Vert _{1}\leq
\Vert \hat{f}\ast \hat{g}\Vert _{1}$ and\footnote{%
Again, $\hat{f}\ast \hat{g}$ is in $L^{1}$ and by [\cite{Ru}, Theorem 1.3.2]
for each $h\in L^{1}$ $\|\hat{f}\ast h\|_{1}\leq \|\hat{f}\|\|h\|_{1}$ where 
$\|f\|$ is the total variation of the finite measure $f$ and $%
\|h\|_{1}=\|h\| $ in case of a measure $h\in L^{1}.$}%
\begin{equation*}
\Vert \hat{f}\ast \hat{g}-\hat{f}\ast g_{\varepsilon }\Vert _{1}\leq
\varepsilon
\end{equation*}%
thus we can replace $\hat{g}$ with $g_{\varepsilon }$ up to an error which
is as small as wanted in the estimate we consider.

We now consider the estimate $\Vert \hat{f}\ast g_{\varepsilon }\Vert _{1}$.
Since $g_{\varepsilon }$ is $C^{1}$ with compact support it is bounded and
has bounded derivative (hence Lipschitz constant), then there is $C$ such
that for every $f_{\epsilon }\in L^{1}$ supported in $[0,1]$ such that $%
\|f_{\epsilon }-\hat{f}\|_{W}\leq $ $C\epsilon $ it holds $|\hat{f}\ast
g_{\varepsilon }(x)-f_{\epsilon }\ast g_{\varepsilon }(x)|\leq \epsilon $
for every $x\in {\mathbb{R}}$, thus 
\begin{equation*}
\Vert f_{\epsilon }\ast g_{\epsilon }-\hat{f}\ast g_{\varepsilon }\Vert
_{1}\leq \varepsilon
\end{equation*}%
and we can also replace $\hat{f}$ with $f_{\epsilon }$ in our main estimate.

Now, recalling $C^{1}$ functions are the integral of their derivative, 
\begin{equation*}
\begin{split}
\Vert f_{\epsilon }\ast g_{\varepsilon }\Vert _{1}& =\int_{\mathbb{R}%
}\left\vert (f_{\epsilon }\ast g_{\varepsilon })(t)\right\vert dt={\underset{%
\Vert h\Vert _{\infty }\leq 1}{\sup |}}\int_{{\mathbb{R}}}h(t)(f_{\epsilon
}\ast g_{\varepsilon })(t)dt| \\
& \leq {\underset{\Vert h\Vert _{\infty }\leq 1}{\sup }|}\int_{{\mathbb{R}}%
}\int_{0}^{t}h(l)dl~(f_{\epsilon }\ast g_{\varepsilon })^{\prime }(t)dt| \\
& ={\underset{\Vert h\Vert _{\infty }\leq 1}{\sup }|}\int_{{\mathbb{R}}%
}\int_{0}^{t}h(l)dl(f_{\epsilon }\ast g_{\varepsilon }^{\prime })(t)dt|.
\end{split}%
\end{equation*}

Note that the last integrand above, if we let $\int_{0}^{t}h(l)=s(t)$ (note
that $s(t)$ is a $1-$Lipschitz function) can be rewritten as%
\begin{equation*}
|\int_{{\mathbb{R}}}s(t)\left( \int_{\mathbb{R}}g_{\varepsilon }^{\prime
}(r)f_{\epsilon }(t-r)dr\right) dt|\leq \int_{{\mathbb{R}}}\int_{\mathbb{R}%
}|s(t)f_{\epsilon }(t-r)|dt\left\vert g_{\varepsilon }^{\prime
}(r)\right\vert dr.
\end{equation*}

Since $f_{\epsilon }$ is a zero average density with support of diameter $%
\leq 1,$ for every translate $f_{\epsilon }(t-r)$ 
\begin{equation*}
\int_{\mathbb{R}}|s(t)f_{\epsilon }(t-r)|dt\leq \|f_{\epsilon }\|_{W}
\end{equation*}%
(on the support of $f_{\epsilon }(t-r),\ s(t)=C+\hat{s}(t)$ with $\|\hat{s}%
\|_{\infty }\leq 1$ and $\int_{\mathbb{R}}|Cf_{\epsilon }(t-r)|dt=0$) then 
\begin{equation*}
\Vert f_{\epsilon }\ast g_{\varepsilon }\Vert _{1}\leq \Vert f_{\epsilon
}\Vert _{W}\Vert g_{\varepsilon }^{\prime }\|_{1}\leq \Vert f\Vert _{W}\Vert
g_{\varepsilon }\Vert _{BV}.
\end{equation*}%
Hence $\Vert f\hat{\ast}g\Vert _{1}\leq \Vert \hat{f}\ast \hat{g}\Vert
_{1}\leq 3\Vert f\Vert _{W}\Vert g\Vert _{BV}+3\varepsilon $ for each $%
\varepsilon $ which gives the statement.

About Equation $($\ref{conv3}$)$ we remark that $\Vert f\ast g\Vert
_{BV}\leq \Vert f\Vert _{1}\cdot \Vert g\Vert _{BV}$ \ is well known. For
the convolution on the interval let us remark that $\Vert \pi ^{\ast }(\mu
)\Vert _{BV}\leq 3\Vert \mu \Vert _{BV}$ and $\Vert \hat{g}\Vert _{BV}\leq
3\Vert g\Vert _{BV}.$
\end{proof}

Lemma \ref{convoocopy1} provides sufficient estimates to prove that the
transfer operator associated to a system with additive noise is regularizing
in the sense required by the assumption $(LR2)$ of Theorem \ref%
{th:linearresponse} under suitable mild assumptions on $T$, as we show in
the following.

\begin{corollary}
\label{regcor}Let $L_{0,T}$ be the trasfer operator of a system with
additive noise whose deterministic part is $T:[0,1]\rightarrow \lbrack 0,1]$
and the noise is distributed according to a kernel $\rho _{0}\in BV[0,1]$. \
The following holds.

\begin{description}
\item[1] If $T$ is nonsingular then $L_{0,T}$ is regularizing from $L^{1}$
to Bounded Variation i.e. $L_{0,T}{:L^{1}\rightarrow BV}$ is continuous.

\item[2] If $T$ is Lipschitz, then $L_{0,T}$ is regularizing from ${BS}$
endowed with the Wasserstein norm to $\ L^{1}$ i.e. $L_{0,T}{:BS\rightarrow
L^{1}}$ is continuous.
\end{description}
\end{corollary}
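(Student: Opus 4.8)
The plan is to read off both regularization statements directly from the three convolution estimates of Lemma~\ref{convoocopy1}, after recording how the deterministic transfer operator $L_{T}$ acts on the two relevant norms. Throughout I use that the boundary-reflecting convolution is commutative, $\rho_{0}\hat{\ast}g=g\hat{\ast}\rho_{0}$ (both equal $\pi_{\ast}(\hat{\rho}_{0}\ast\hat{g})$ and ordinary convolution commutes), so that in $L_{0,T}(f)=\rho_{0}\hat{\ast}L_{T}(f)$ the noise kernel $\rho_{0}\in BV$ can always be placed in whichever slot of Lemma~\ref{convoocopy1} requires the $BV$ factor.

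For Item~1, suppose $T$ is nonsingular, so that $L_{T}$ is the usual transfer operator, a weak contraction on $L^{1}$ with $\|L_{T}f\|_{1}\leq\|f\|_{1}$. Given $f\in L^{1}$ we have $L_{T}(f)\in L^{1}$, and applying \eqref{conv3} with $L_{T}(f)$ in the $L^{1}$ slot and $\rho_{0}$ in the $BV$ slot gives
\begin{equation*}
\|L_{0,T}(f)\|_{BV}=\|L_{T}(f)\hat{\ast}\rho_{0}\|_{BV}\leq 9\,\|L_{T}(f)\|_{1}\,\|\rho_{0}\|_{BV}\leq 9\,\|\rho_{0}\|_{BV}\,\|f\|_{1},
\end{equation*}
which is exactly continuity of $L_{0,T}:L^{1}\rightarrow BV$. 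Note that only nonsingularity is needed, since the $L^{1}$-contraction of $L_{T}$ is automatic.

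For Item~2 the parallel computation relies on \eqref{conv2}, but that estimate converts a Wasserstein bound into an $L^{1}$ bound only for the \emph{zero-average} factor, so two points must be handled. First, the role of the Lipschitz hypothesis is precisely to make $L_{T}=T_{\ast}$ bounded in the Wasserstein norm: for $\|g\|_{\infty}\leq1$, $Lip(g)\leq1$ one has $\int g\,d(T_{\ast}\mu)=\int g\circ T\,d\mu$ with $g\circ T$ of Lipschitz constant at most $Lip(T)$ and sup-norm at most $1$, whence $\|L_{T}\mu\|_{W}\leq\max(1,Lip(T))\,\|\mu\|_{W}$ after rescaling the test function by $\max(1,Lip(T))$. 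Second, $L_{T}$ preserves total mass, so $L_{T}(f)$ need not have zero average; I therefore split off the mean $\bar{f}:=f([0,1])$ and write
\begin{equation*}
L_{0,T}(f)=\rho_{0}\hat{\ast}\bigl(L_{T}(f)-\bar{f}\,m\bigr)+\rho_{0}\hat{\ast}\bigl(\bar{f}\,m\bigr).
\end{equation*}
The first summand has a zero-average $BS$ factor, so \eqref{conv2} bounds its $L^{1}$ norm by $3\,\|L_{T}(f)-\bar{f}\,m\|_{W}\,\|\rho_{0}\|_{BV}$; combining the Wasserstein boundedness just established with $|\bar{f}|\leq\|f\|_{W}$ and $\|m\|_{W}\leq\|m\|_{1}=1$ gives $\leq 3(\max(1,Lip(T))+1)\,\|\rho_{0}\|_{BV}\,\|f\|_{W}$. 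The second summand has $\bar{f}\,m\in L^{1}$ and is treated by \eqref{conv3} exactly as in Item~1, giving $\|\rho_{0}\hat{\ast}(\bar{f}\,m)\|_{1}\leq 9\,|\bar{f}|\,\|\rho_{0}\|_{BV}\leq 9\,\|f\|_{W}\,\|\rho_{0}\|_{BV}$. Summing yields $\|L_{0,T}(f)\|_{1}\leq C\|f\|_{W}$ with $C$ depending only on $Lip(T)$ and $\|\rho_{0}\|_{BV}$, i.e.\ continuity of $L_{0,T}:(BS,\|\cdot\|_{W})\rightarrow L^{1}$.

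The only genuinely delicate step is Item~2, and within it the two points above: verifying that the Lipschitz assumption is exactly what pushes a Wasserstein bound through $L_{T}$, and noticing that \eqref{conv2} applies only to zero-average measures, so the Lebesgue component $\bar{f}\,m$ must be peeled off and estimated separately via \eqref{conv3}. Both are light once isolated; the substantive regularization content is carried entirely by Lemma~\ref{convoocopy1}.
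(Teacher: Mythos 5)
Your proof is correct and rests on the same engine as the paper's: both items are read off from the convolution estimates of Lemma~\ref{convoocopy1} after noting that $L_T$ is an $L^1$-contraction (Item~1) resp.\ bounded in the Wasserstein norm with constant $\max(1,Lip(T))$ (Item~2). Item~1 matches the paper's argument verbatim, up to your explicit remark that $\hat{\ast}$ commutes so the $BV$ factor $\rho_0$ can be put in the slot that \eqref{conv3} requires.

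On Item~2 you do something the paper does not: you observe that \eqref{conv2} is stated only for measures $f$ with $f([0,1])=0$, whereas $L_T(f)$ has the same total mass as $f$ and is generally not zero-average, and you repair this by splitting $L_T(f)=(L_T(f)-\bar{f}\,m)+\bar{f}\,m$, applying \eqref{conv2} to the mean-zero part and \eqref{conv3} (plus $\|\cdot\|_1\leq\|\cdot\|_{BV}$) to the Lebesgue component, with $|\bar{f}|\leq\|f\|_W$ controlling the extra term. The paper's own proof applies \eqref{conv2} to $L_T(f)$ directly, silently ignoring the zero-average hypothesis (and dropping the factor $3$); your decomposition closes that small gap at the cost of a slightly larger constant, which is harmless since only continuity is claimed. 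The substance — that all regularization comes from the convolution with the $BV$ kernel — is identical in both treatments.
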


\begin{proof}
The item 1 directly follows from the definitions. Let us take $f$ with $%
||f||_{1}\leq 1$, then $L_{0,T}(f):=\rho _{0}\hat{\ast}L_{T}(f)$ then \ $%
||L_{T}(f)||_{1}\leq 1$ and by $($\ref{conv3}$)$ $||\rho _{0}\hat{\ast}%
L_{T}(f)||_{BV}\leq 9||\rho _{0}||_{BV}$.

Similarly for the item 2 we remark that if $K$ is the Lipschitz constant of $%
T$ then $||L_{T}(f)||_{W}\leq \max (K,1)||f||_{W},$ by $($\ref{conv2}$)$ we
get $||\rho _{0}\hat{\ast}L_{T}(f)||_{1}\leq \max (K,1)||f||_{W}||\rho
_{0}||_{BV}$.
\end{proof}

Another consequence of the above regularization properties is the existence
of a fixed point for $L_{\xi ,T_{\delta }}$ in $BV$ whose variation is
bounded by the variation of the noise kernel. This will imply that
assumption $(LR0)$ is satisfied by families of systems whose kernel has
uniformly bounded variation.

\begin{lemma}
\label{lem:fixedpoint} Let us suppose $T_{\delta }$ being a nonsingular map,
let $\rho _{\xi }\in BV$ and let {%
\begin{equation}
L_{\xi ,T_{\delta }}(f):=\rho _{\xi }\hat{\ast}L_{T_{\delta }}(f)
\end{equation}%
be a transfer operator of a system with additive noise as in }$(${\ref%
{eq:transop}}$)${. Then }there exists $f_{\xi ,\delta }\in BV$ such that $%
L_{\xi ,T_{\delta }}f_{\xi ,\delta }=f_{\xi ,\delta }$ and%
\begin{equation*}
||f_{\xi ,\delta }||_{BV}\leq 9||\rho _{\xi }||_{BV}.
\end{equation*}
\end{lemma}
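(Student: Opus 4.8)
The plan is to run the classical Krylov--Bogoliubov construction, now using the regularizing estimate \eqref{conv3} to produce a fixed point with controlled bounded variation. First I would observe that $L_{\xi,T_\delta}$ is a Markov operator: since $T_\delta$ is nonsingular, $L_{T_\delta}$ is Markov on $L^1$, and the boundary-reflecting convolution with the probability density $\rho_\xi$ preserves positivity and total mass (ordinary convolution with a probability density does, and $\pi_\ast$ is positive and mass-preserving). In particular $L_{\xi,T_\delta}$ is an $L^1$ contraction, hence continuous on $L^1$.

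The key ingredient is the a priori bound on a single application of the operator. The proof of Corollary \ref{regcor}(1) uses only that the kernel lies in $BV$ and that the deterministic part is nonsingular, so it applies verbatim with $\rho_\xi,T_\delta$ in place of $\rho_0,T$ and yields
\begin{equation*}
\|L_{\xi,T_\delta}(f)\|_{BV}\leq 9\,\|\rho_\xi\|_{BV}\,\|f\|_1 .
\end{equation*}
Applying this to the iterates of Lebesgue measure $m$, and using $\|L_{\xi,T_\delta}^{n-1}(m)\|_1=1$ (these are probability densities), I obtain the uniform bound $\|L_{\xi,T_\delta}^{n}(m)\|_{BV}\leq 9\|\rho_\xi\|_{BV}$ for every $n\geq 1$.

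Next I would form the Ces\`aro averages $m_N=\frac{1}{N}\sum_{i=1}^{N}L_{\xi,T_\delta}^{i}(m)$. These are again probability densities, and by convexity they inherit the bound $\|m_N\|_{BV}\leq 9\|\rho_\xi\|_{BV}$; in particular they are uniformly bounded in variation and in sup norm (as $\|\cdot\|_\infty\leq\|\cdot\|_{BV}$). Helly's selection principle then gives a subsequence $m_{N_k}$ converging in $L^1$ to a probability density $f_{\xi,\delta}\in BV$, and lower semicontinuity of the variation under $L^1$ convergence yields $\|f_{\xi,\delta}\|_{BV}\leq\liminf_k\|m_{N_k}\|_{BV}\leq 9\|\rho_\xi\|_{BV}$. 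Invariance follows as in the classical argument: since $\|L_{\xi,T_\delta}m_N-m_N\|_1=\frac{1}{N}\|L_{\xi,T_\delta}^{N+1}(m)-L_{\xi,T_\delta}(m)\|_1\leq\frac{2}{N}\to 0$ and $L_{\xi,T_\delta}$ is continuous on $L^1$, passing to the limit along $m_{N_k}$ gives $L_{\xi,T_\delta}f_{\xi,\delta}=f_{\xi,\delta}$.

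This argument is essentially the existence half of Lemma \ref{extuniq}, with the constant tracked explicitly. The only step needing a little care is transferring the uniform $BV$ bound from the averages to the limit, which I handle through the lower semicontinuity of the variation (equivalently, it is built into Helly's theorem); the substantive content is that the bound takes the clean form $9\|\rho_\xi\|_{BV}$, which is exactly what estimate \eqref{conv3} provides.
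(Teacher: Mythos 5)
Your proof is correct and follows essentially the same route as the paper's: both run the Krylov--Bogoliubov construction on the iterates of Lebesgue measure, use the convolution estimate \eqref{conv3} (via the regularization of Corollary \ref{regcor}) to get the uniform bound $9\|\rho_\xi\|_{BV}$ on $\|L_{\xi,T_\delta}^n(m)\|_{BV}$, and extract an invariant limit of the Ces\`aro averages by Helly's selection principle. Your explicit appeal to lower semicontinuity of the variation to carry the bound to the limit is a detail the paper leaves implicit, but it is the same argument.
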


\begin{proof}
The proof is similar to the proof of Lemma \ref{extuniq}. Let us denote $%
L:=L_{\xi ,T_{\delta }}$ for short. Let us consider the iterates $L^{n}(m)$
of the Lebesgue measure $m$. Because of $($\ref{conv3}$)$ these iterates
have a Bounded Variation density, indeed since $L$ is a Markov operator, $%
\Vert L^{n}(m)\Vert _{1}\leq 1$ for all $n$, and $L:L^{1}\rightarrow BV[0,1]$
is continuous we have that 
\begin{eqnarray*}
\Vert L^{n}(m)\Vert _{BV} &\leq &\Vert L\Vert _{L^{1}\rightarrow BV}\Vert
L^{n-1}(m)\Vert _{1}\leq \Vert L\Vert _{L^{1}\rightarrow BV} \\
&\leq &9||\rho _{\xi }||_{BV}
\end{eqnarray*}%
for every $n$. Now the proof continues exactly like in Lemma \ref{extuniq}.
We obtain a limit measure for the Cesaro averages $f_{\xi ,\delta }\in BV$
with $||f_{\xi ,\delta }||_{BV}\leq 9||\rho _{\xi }||_{BV}.$
\end{proof}

\subsection{Perturbing the map}

\label{secptm}

We now consider the case in which \ the system is perturbed by changing $T$
while keeping the noise unchanged. Hence we change \textquotedblleft
deterministically\textquotedblright\ the deterministic part of the system.
Let $T_{\delta }:[0,1]\rightarrow \lbrack 0,1]$ be a family of
\textquotedblleft small\textquotedblright\ perturbations of $T_{0}$ (in a
sense which will be made precise below) parametrized by $\delta \in \lbrack
0,\overline{\delta }]$. Consider the family of transfer operators $%
L_{T_{\delta }}:L^{1}([0,1])\rightarrow L^{1}([0,1])$, $\delta \in \left[ 0,%
\overline{\delta }\right] $ and \ the associated operators with noise $%
L_{\xi ,T\delta }$ defined as before as 
\begin{equation*}
L_{\xi ,T_{\delta }}(f):=\rho _{\xi }\hat{\ast}L_{T_{\delta }}(f).
\end{equation*}

We now compute the structure of the derivative operator and prove that these
perturbations, under suitable assumptions, are small, satisfying assumption $%
(LR3)$ of Theorem \ref{th:linearresponse}. To this purpose we have the
following

\begin{proposition}
\label{nearness} Let $f\in L^{1}$, let $D_{\delta }:[0,1]\rightarrow \lbrack
0,1]$ be a family of continuous bijections, such that 
\begin{equation*}
D_{\delta }={\mathds{1}}+\delta S
\end{equation*}%
where $S$ is a $1$-Lipschitz map such that $S(0)=S(1)=0$ with support $\chi
_{S},$ finite union of intervals. Let $T:[0,1]\rightarrow \lbrack 0,1]$ be a
nonsingular Borel map and $L_{T}$ its transfer operator. Let $L_{D_{\delta
}\circ T}$ be the transfer operator of the map $D_{\delta }\circ T$. Then 
\begin{equation}
\Vert L_{D_{\delta }\circ T}f-L_{T}(f)\Vert _{W}\leq \delta \Vert f\Vert
_{1}.  \label{nearness1}
\end{equation}%
%
%
%
%
%
%
%
%
%
%
%
%
%
%
%
%
%
%
%
%
%
%
%
%
\end{proposition}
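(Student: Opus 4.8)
The plan is to reduce the statement to the functoriality of the pushforward together with the dual definition of $\Vert \cdot \Vert _{W}$. First I would use that the transfer operator of a composition factors as a composition of transfer operators: since $L_{T}$ is the pushforward $T_{\ast }$ (see \eqref{pish}) and $(D_{\delta }\circ T)_{\ast }=(D_{\delta })_{\ast }\circ T_{\ast }$, we have $L_{D_{\delta }\circ T}f=L_{D_{\delta }}(L_{T}f)$. Writing $h:=L_{T}f$, which lies in $L^{1}$ because $T$ is nonsingular, the quantity to estimate becomes $\Vert L_{D_{\delta }}h-h\Vert _{W}$, i.e. the Wasserstein distance between the density $h$ and its pushforward under the near-identity map $D_{\delta }$.

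Next I would unfold the Wasserstein--Kantorovich norm \eqref{Wass}. For any test function $g$ with $\Vert g\Vert _{\infty }\leq 1$ and $Lip(g)\leq 1$, the change-of-variables formula for the pushforward gives $\int_{0}^{1}g\,d(L_{D_{\delta }}h)=\int_{0}^{1}g(D_{\delta }(x))\,h(x)\,dx$, so that
\[
\int_{0}^{1}g\,d(L_{D_{\delta }}h-h)=\int_{0}^{1}\big(g(D_{\delta }(x))-g(x)\big)\,h(x)\,dx .
\]
The core estimate is then pointwise: since $g$ is $1$-Lipschitz and $D_{\delta }(x)-x=\delta S(x)$, we get $|g(D_{\delta }(x))-g(x)|\leq |D_{\delta }(x)-x|=\delta |S(x)|$; and because $S$ is $1$-Lipschitz with $S(0)=0$ we have $|S(x)|=|S(x)-S(0)|\leq x\leq 1$, hence $|g(D_{\delta }(x))-g(x)|\leq \delta $. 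Integrating against $|h|$ yields $\left\vert \int_{0}^{1}g\,d(L_{D_{\delta }}h-h)\right\vert \leq \delta \int_{0}^{1}|h(x)|\,dx=\delta \Vert h\Vert _{1}$, uniformly over admissible $g$. Taking the supremum over such $g$ gives $\Vert L_{D_{\delta }}h-h\Vert _{W}\leq \delta \Vert h\Vert _{1}$.

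Finally I would close the argument by recalling that $L_{T}$, being the transfer operator of a nonsingular map, is a Markov operator and hence a weak contraction on $L^{1}$, so $\Vert h\Vert _{1}=\Vert L_{T}f\Vert _{1}\leq \Vert f\Vert _{1}$, which is exactly \eqref{nearness1}. I do not anticipate a genuine obstacle here; the only points that deserve a line of care are the factorization $L_{D_{\delta }\circ T}=L_{D_{\delta }}L_{T}$ and the correct application of the pushforward change of variables (the content of \eqref{pish}). Conceptually, the mechanism is that testing against $1$-Lipschitz functions converts the geometric displacement $D_{\delta }-{\mathds{1}}=\delta S$ directly into a factor $\delta $; this is precisely why $\Vert \cdot \Vert _{W}$, rather than $\Vert \cdot \Vert _{1}$, is the right norm for perturbations that merely move mass by a small amount (the $L^{1}$ distance between $h$ and $L_{D_{\delta }}h$ would not in general be small).
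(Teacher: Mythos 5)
Your proposal is correct and follows essentially the same route as the paper: factor $L_{D_{\delta}\circ T}=L_{D_{\delta}}\circ L_{T}$, bound $\Vert L_{D_{\delta}}h-h\Vert_{W}$ by testing against $1$-Lipschitz observables and using $|g(D_{\delta}(x))-g(x)|\leq\delta|S(x)|\leq\delta$, then use that $L_{T}$ is an $L^{1}$ weak contraction. The paper's version is just a terser statement of the same estimate (and your direct derivation of $\Vert L_{D_{\delta}}h-h\Vert_{W}\leq\delta\Vert h\Vert_{1}$ is in fact cleaner than the intermediate bound $\leq\delta\Vert h\Vert_{W}$ asserted there).
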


\begin{proof}
Equation $($\ref{nearness1}$)$ follows by the remark that for each $g\in
L^{1}$, $\Vert L_{D_{\delta }}g-g\Vert _{W}\leq \delta \Vert g\Vert _{W}\leq
\delta \Vert g\Vert _{1},$then posing $g=L_{T}(f),$ $\ \Vert L_{D_{\delta
}\circ T}f-L_{T}(f)\Vert _{W}\leq \delta \Vert L_{T}(f)\Vert _{1}\leq \delta
\Vert f\Vert _{1}.$

\end{proof}

\begin{proposition}
\label{lem_wderivative} Let $D_{\delta }:[0,1]\rightarrow \lbrack 0,1]$ be a
family of continuous bijections as in Proposition \ref{nearness}. Let $%
L_{D_{\delta }}$ be the associated transfer operator and suppose $f\in
BV(\chi _{S}),$ then 
\begin{equation*}
\lim_{\delta \rightarrow 0}\left\Vert \frac{L_{D_{\delta }}f-f}{\delta }%
-(-f\cdot S)^{\prime }\right\Vert _{W}=0
\end{equation*}%
where $(f\cdot S)^{\prime }$ is meant in the weak sense (it is a measure,
the derivative of a $BV$ function).
\end{proposition}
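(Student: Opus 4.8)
The plan is to estimate the Wasserstein--Kantorovich norm directly, by testing the signed measure $\frac{L_{D_\delta}f-f}{\delta}-(-fS)'$ against functions $g$ with $\|g\|_\infty\le1$ and $\mathrm{Lip}(g)\le1$ and showing the result goes to $0$ \emph{uniformly} in such $g$. First I would rewrite each of the two pieces. By the duality between the transfer (pushforward) operator and composition,
\[
\int_0^1 g\,d\Big(\tfrac{L_{D_\delta}f-f}{\delta}\Big)=\frac1\delta\int_0^1\big[g(x+\delta S(x))-g(x)\big]f(x)\,dx .
\]
Since $f\in BV(\chi_S)$ and $S$ is Lipschitz with $S(0)=S(1)=0$, the product $fS$ is $BV$, so $(fS)'$ is a genuine measure; a Lebesgue--Stieltjes integration by parts, whose boundary terms vanish because $S(0)=S(1)=0$, gives
\[
\int_0^1 g\,d(-fS)'=\int_0^1 g'(x)\,f(x)S(x)\,dx .
\]
Thus everything reduces to bounding the difference of these two integrals uniformly in $g$.

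The decisive point is to never differentiate the merely Lipschitz $g$ pointwise, but to use the fundamental theorem of calculus for the absolutely continuous function $g$,
\[
\frac{g(x+\delta S(x))-g(x)}{\delta}=S(x)\int_0^1 g'\big(x+\delta t\,S(x)\big)\,dt ,
\]
and Fubini, so that the quantity to control becomes
\[
\int_0^1\!\!\int_0^1 f(x)S(x)\big[g'(x+\delta t\,S(x))-g'(x)\big]\,dx\,dt .
\]
For each fixed $t$ the map $D_{\delta t}=\mathds 1+\delta t\,S$ is an increasing bi-Lipschitz bijection of $[0,1]$ fixing the endpoints (for $\delta$ small), and the substitution $y=D_{\delta t}(x)$ turns $\int_0^1 fS\,(g'\circ D_{\delta t})\,dx$ into $\int_0^1\big(L_{D_{\delta t}}(fS)\big)g'\,dy$, because the pushforward density of $fS$ under $D_{\delta t}$ is exactly $L_{D_{\delta t}}(fS)$. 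Hence the inner integral equals $\int_0^1\big[L_{D_{\delta t}}(fS)-fS\big]g'\,dy$, which by Hölder and $\|g'\|_\infty\le\mathrm{Lip}(g)\le1$ is bounded by $\|L_{D_{\delta t}}(fS)-fS\|_1$, a bound independent of $g$.

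This is exactly where the uniformity is won, and it is the main obstacle: the naive approach of fixing $g$ and applying dominated convergence fails, because $g'$ ranges over the whole unit ball of $L^\infty$ and may oscillate arbitrarily, so no single dominating function gives uniform control; the change of variables trades the uncontrollable translate of $g'$ for the controllable $L^1$-displacement of the \emph{fixed} function $fS$. To conclude, I would invoke the strong $L^1$-continuity of the transfer operators along $D_s=\mathds 1+sS$, i.e. $\lim_{s\to0^+}\|L_{D_s}\psi-\psi\|_1=0$ for every $\psi\in L^1$, applied to $\psi=fS$. This last fact is routine: writing $L_{D_s}\psi(y)=\psi(D_s^{-1}(y))\,(D_s^{-1})'(y)$ and splitting into a Jacobian part and a displacement part, a change of variables bounds the Jacobian part by $s\|S'\|_\infty\|\psi\|_1\le s\|\psi\|_1$ (as $|S'|\le1$ a.e.), while the displacement part $\int_0^1|\psi(D_s^{-1}(y))-\psi(y)|\,dy$ tends to $0$ by $L^1$-continuity of translations together with a density argument (the operators $L_{D_s}$ being $L^1$ contractions). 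Since $\delta t\le\delta$, combining the above gives
\[
\Big\|\tfrac{L_{D_\delta}f-f}{\delta}-(-fS)'\Big\|_W\le\sup_{0\le s\le\delta}\big\|L_{D_s}(fS)-fS\big\|_1 ,
\]
and the right-hand side tends to $0$ as $\delta\to0$, which is the claim.
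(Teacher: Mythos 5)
Your proof is correct, and its core computation (duality with the composition operator, plus integration by parts using $S(0)=S(1)=0$) coincides with the paper's. Where you diverge is in the convergence step. The paper fixes a test function $g$, notes the pointwise convergence $\frac{g(x+\delta S(x))-g(x)}{\delta}\to g'(x)S(x)$, and applies dominated convergence; this yields $\int g\,d[\cdots]\to 0$ for each fixed $g$, but since $\Vert\cdot\Vert_{W}$ is a supremum over the whole unit ball of Lipschitz test functions, pointwise-in-$g$ convergence does not by itself control that supremum. You identify exactly this issue and resolve it: writing the difference quotient via the fundamental theorem of calculus, applying Fubini, and changing variables $y=D_{\delta t}(x)$ converts the uncontrollable translate of $g'$ into the $L^{1}$-displacement $\Vert L_{D_{\delta t}}(fS)-fS\Vert_{1}$ of the \emph{fixed} function $fS$, which is independent of $g$ and tends to $0$ by strong $L^{1}$-continuity of the transfer operators along $D_{s}=\mathds{1}+sS$. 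So your argument is not merely an alternative route: it supplies the uniformity in $g$ that the paper's dominated-convergence step leaves implicit, at the modest price of the extra (routine) lemma on continuity of $s\mapsto L_{D_{s}}\psi$ in $L^{1}$. The only points worth making explicit are that $fS$, extended by zero outside $\chi_{S}$, is a genuine $BV$ function on $[0,1]$ (so that $(fS)^{\prime}$ is a measure and the integration by parts with vanishing boundary terms is legitimate; this uses that $S$ vanishes on the boundary of its support), and that for each fixed $t$ the expression $g^{\prime}(x+\delta t S(x))$ is defined for a.e.\ $x$ because $D_{\delta t}$ is bi-Lipschitz for small $\delta$ and hence pulls back null sets to null sets. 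Both are routine.
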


\begin{proof}
Let us remark that since $S(0)=S(1)=0$ then $\frac{L_{D_{\delta }}f-f}{%
\delta }-(-f\cdot S)^{\prime }$ represents a zero average measure while $%
\frac{L_{D_{\delta }}f-f}{\delta }\in L^{1}$. \ Thus 
\begin{equation*}
\Vert \frac{L_{D_{\delta }}f-f}{\delta }-(-f\cdot S)^{\prime }\Vert _{W}\leq 
\underset{g\in C^{1},Lip(g)\leq 1}{\sup }\int_{0}^{1}g~d[\frac{L_{D_{\delta
}}f-f}{\delta }-(-f\cdot S)^{\prime }].
\end{equation*}%
For every $g$ such that $g(0)=0$ and $Lip(g)\leq 1$, it holds%
\begin{eqnarray*}
\int gd\left[ \frac{L_{D_{\delta }}f-f}{\delta }-(-fS)^{\prime }\right]
&=&\int gd[(fS)^{\prime }]+\int \frac{gL_{D_{\delta }}f-gf}{\delta }dm \\
&=&\int gd[(fS)^{\prime }]+\int \frac{g\circ D_{\delta }-g}{\delta }dfm
\end{eqnarray*}%
where we used the duality between the transfer operator and the composition
operator. Let us observe that the rightmost element can be written as 
\begin{equation*}
\int \frac{g\circ D_{\delta }-g}{\delta }dfm=\int \frac{{g(x+\delta
S(x))-g(x)}}{\delta }dfm;
\end{equation*}%
now, pointwise it holds $\frac{{g(x+\delta S(x))-g(x)}}{\delta }\underset{%
\delta \rightarrow 0}{\rightarrow }g^{\prime }(x)S(x)$ and the convergence
is dominated by $\frac{|{g(x)-g(x+\delta S(x))|}}{{\delta }}{\leq }\underset{%
x}{{\sup }}{(|S(x)|)<\infty }$. By the Lebesgue convergence theorem hence $%
\int \frac{{g(x)-g(x+\delta S(x))}}{\delta }dfm\rightarrow \int g^{\prime
}(x)S(x)~dfm$. \ Since $g(0)=0$ and $S(1)=0,$ we obtain, using integrations
by parts again, that 
\begin{equation}
\int gd[(fS)^{\prime }]+\int g^{\prime }(x)S(x)dfm=0
\end{equation}%
proving the statement.
\end{proof}

\begin{corollary}
\label{cor}Let $T:[0,1]\rightarrow \lbrack 0,1]$ be a nonsingular Borel map
and $L_{T}$ its transfer operator. Suppose $L_{T}:BV([0,1])\rightarrow
BV(\chi _{S})$ is continuous\footnote{%
There is $K\geq 0$ such that $\|1_{\chi _{S}}\cdot L_{T}f\|_{BV}\leq
K\|f\|_{BV}$ for each $f\in BV[0,1]$. This condition is sufficient to ensure
that $L_{T}(f)S$ is a $BV$ function and then $(L_{T}(f)S)^{\prime }\in BS.$}%
. Let $L_{D_{\delta }\circ T}$ as before then 
\begin{equation*}
\lim_{\delta \rightarrow 0}\|\frac{L_{D_{\delta }\circ T}f-L_{T}f}{\delta }%
-(-L_{T}(f)S)^{\prime }\|_{W}=0.
\end{equation*}
\end{corollary}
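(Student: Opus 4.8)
The plan is to reduce the statement directly to the two preceding propositions, since the corollary is essentially their composition with a fixed inner operator. First I would use the functoriality of the transfer operator under composition of maps: because $(D_{\delta }\circ T)_{\ast }=(D_{\delta })_{\ast }\circ T_{\ast }$, the associated operators factor as $L_{D_{\delta }\circ T}=L_{D_{\delta }}\circ L_{T}$. Writing $g:=L_{T}(f)$, the difference quotient to be controlled becomes
\begin{equation*}
\frac{L_{D_{\delta }\circ T}f-L_{T}f}{\delta }=\frac{L_{D_{\delta }}g-g}{\delta },
\end{equation*}
so the target quantity is exactly $\left\Vert \frac{L_{D_{\delta }}g-g}{\delta }-(-g\cdot S)^{\prime }\right\Vert _{W}$ with $g=L_{T}(f)$.

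Next I would verify that Proposition \ref{lem_wderivative} applies to $g$. That proposition requires its argument to lie in $BV(\chi _{S})$, which is precisely what the continuity hypothesis $L_{T}:BV([0,1])\rightarrow BV(\chi _{S})$ supplies: for $f\in BV$ one has $g=L_{T}(f)\in BV(\chi _{S})$. The footnote's quantitative bound $\Vert \mathds{1}_{\chi _{S}}\cdot L_{T}f\Vert _{BV}\leq K\Vert f\Vert _{BV}$ is the effective form of this, and it also guarantees that $L_{T}(f)S$ is a genuine $BV$ function, so that $(L_{T}(f)S)^{\prime }$ is well defined as a measure in $BS$. With $g\in BV(\chi _{S})$ in hand, Proposition \ref{lem_wderivative} yields
\begin{equation*}
\lim_{\delta \rightarrow 0}\left\Vert \frac{L_{D_{\delta }}g-g}{\delta }-(-g\cdot S)^{\prime }\right\Vert _{W}=0.
\end{equation*}
Substituting $g=L_{T}(f)$ back in gives $(-g\cdot S)^{\prime }=(-L_{T}(f)S)^{\prime }$ and hence the claimed limit.

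I do not expect any genuine analytic obstacle here, as the substantive estimates have already been carried out in Proposition \ref{lem_wderivative} (the dominated-convergence/integration-by-parts computation for the weak derivative). The only point that needs care is the bookkeeping at the interface between the two propositions: one must confirm that the algebraic factorization $L_{D_{\delta }\circ T}=L_{D_{\delta }}L_{T}$ holds for the transfer operators (so that the perturbation acts purely through the outer factor $L_{D_{\delta }}$), and that applying $L_{T}$ does not destroy the regularity required by Proposition \ref{lem_wderivative}, i.e. that $L_{T}(f)$ remains in $BV(\chi _{S})$. Both are secured by the hypotheses, so in effect the entire content of the corollary is that the derivative-operator limit of Proposition \ref{lem_wderivative} passes unchanged through the fixed inner operator $L_{T}$.
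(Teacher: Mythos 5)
Your proposal is correct and follows essentially the same route as the paper: factor $L_{D_{\delta }\circ T}=L_{D_{\delta }}\circ L_{T}$, note that the hypothesis guarantees $L_{T}f\in BV(\chi _{S})$, and apply Proposition \ref{lem_wderivative} to $g=L_{T}f$. The paper's own proof is exactly this one-line reduction, so nothing further is needed.
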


\begin{proof}
Since $L_{T}f$ is of bounded variation on $\chi _{S}$, it is sufficient to
apply Proposition \ref{lem_wderivative} to $L_{T}f,$ noticing that $%
L_{D_{\delta }\circ T}=L_{D_{\delta }}(L_{T}f)$.
\end{proof}

\begin{remark}
In the case where $L_{T}f\notin BV(\chi _{S})$ or $L_{T}(f)\cdot S$ is not
of Bounded Variation, $( L_{T}(f_0) \cdot S)^{\prime }$ can be interpreted
as a distribution in the dual of the Lipschitz functions.
\end{remark}

Applying the convolution equation \ref{conv2} to Corollary \ref{cor} and
Proposition \ref{nearness} we get directly the convergence of the derivative
and small perturbation of the operator as requested at Item $3)$ of Theorem %
\ref{th:linearresponse}.

\begin{proposition}
\label{pertmap}Let $D_{\delta }={\mathds{1}}+\delta S$ be a bilipschitz
homeomorphism near the identity, $[0,1]\rightarrow \lbrack 0,1]$, where $S$
is a $1$-Lipschitz map such that $S(0)=S(1)=0$. Denote with $\chi _{S}$ the
support of $S,$ suppose $\chi _{S}$ is a finite union of intervals. Suppose $%
L_{T}:BV[0,1]\rightarrow BV(\chi _{S})$ is continuous. Let $T_{\delta
}=D_{\delta }\circ T$ so that 
\begin{equation*}
L_{\xi ,T_{\delta }}f:=\rho _{\xi }\hat{\ast}L_{D_{\delta }\circ T}f
\end{equation*}%
then $\|L_{\xi ,D_{\delta }\circ T_{0}}f-L_{\xi ,T_{0}}(f)\|_{1}\leq \delta
\|f\|_{1}$ and the following limit defining the derivative operator of such
a system converges 
\begin{equation}
\underset{\delta \rightarrow 0}{\lim }\left\Vert \frac{(L_{\xi ,T_{\delta
}}-L_{\xi ,T_{0}})}{\delta }f_{0}-\rho _{\xi }\hat{\ast}(-L_{T}(f_{0})S)^{%
\prime }\right\Vert _{1}=0.  \label{eq:noiseresponse}
\end{equation}
\end{proposition}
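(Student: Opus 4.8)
The plan is to reduce both assertions to the weak-norm estimates already established in Proposition \ref{nearness} and Corollary \ref{cor}, exploiting the bilinearity of $\hat{\ast}$ to pull out the fixed kernel $\rho_\xi$ and the smoothing inequality \eqref{conv2} to promote control in the Wasserstein-like norm $\Vert\cdot\Vert_W$ into control in $\Vert\cdot\Vert_1$. Since $D_0={\mathds{1}}$ we have $T_0=T$ and $L_{\xi,T_0}f=\rho_\xi\hat{\ast}L_Tf$, so bilinearity of the boundary-reflecting convolution gives
\begin{equation*}
L_{\xi,T_\delta}f-L_{\xi,T_0}f=\rho_\xi\hat{\ast}\bigl(L_{D_\delta\circ T}f-L_Tf\bigr).
\end{equation*}
The one structural fact that makes everything work is that $L_{D_\delta\circ T}f-L_Tf$ is a \emph{zero-average} measure: transfer operators of nonsingular maps preserve the integral, hence $\int(L_{D_\delta\circ T}f-L_Tf)\,dm=0$. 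This is exactly the hypothesis that licenses \eqref{conv2}.

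For the small-perturbation bound I would use commutativity of $\hat{\ast}$ to place the zero-average factor first, apply \eqref{conv2}, and then invoke Proposition \ref{nearness}:
\begin{equation*}
\Vert L_{\xi,T_\delta}f-L_{\xi,T_0}f\Vert_1\leq 3\Vert\rho_\xi\Vert_{BV}\,\Vert L_{D_\delta\circ T}f-L_Tf\Vert_W\leq 3\Vert\rho_\xi\Vert_{BV}\,\delta\Vert f\Vert_1 .
\end{equation*}
This is the small-perturbation estimate required by $(LR3)$, of the form $K\delta\Vert f\Vert_1$ with $K=3\Vert\rho_\xi\Vert_{BV}$ a fixed constant (since only the map, not the noise, is being varied here).

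For the derivative, bilinearity again gives
\begin{equation*}
\frac{L_{\xi,T_\delta}f_0-L_{\xi,T_0}f_0}{\delta}-\rho_\xi\hat{\ast}(-L_T(f_0)S)'=\rho_\xi\hat{\ast}\left[\frac{L_{D_\delta\circ T}f_0-L_Tf_0}{\delta}-(-L_T(f_0)S)'\right].
\end{equation*}
The bracketed measure is zero-average as well: the difference quotient is, as above, while $(-L_T(f_0)S)'$ integrates to $-\bigl[L_T(f_0)S\bigr]_0^1=0$ because $S(0)=S(1)=0$. Applying \eqref{conv2} once more yields
\begin{equation*}
\left\Vert\frac{L_{\xi,T_\delta}f_0-L_{\xi,T_0}f_0}{\delta}-\rho_\xi\hat{\ast}(-L_T(f_0)S)'\right\Vert_1\leq 3\Vert\rho_\xi\Vert_{BV}\left\Vert\frac{L_{D_\delta\circ T}f_0-L_Tf_0}{\delta}-(-L_T(f_0)S)'\right\Vert_W,
\end{equation*}
and the right-hand side tends to $0$ as $\delta\to 0$ by Corollary \ref{cor}, which applies because the hypothesis that $L_T\colon BV[0,1]\to BV(\chi_S)$ is continuous together with $f_0\in BV$ gives $L_T(f_0)\in BV(\chi_S)$. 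This is precisely \eqref{eq:noiseresponse}.

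The computations are routine once this reduction is in place, and I do not expect a genuine obstacle. The only point that requires care—and the hinge of the whole argument—is the bookkeeping that both $L_{D_\delta\circ T}f-L_Tf$ and the limiting measure $(-L_T(f_0)S)'$ have zero average, since this is the condition under which \eqref{conv2} turns weak (Wasserstein) smallness into $L^1$ smallness. All of the substantive analytic work—the regularization estimate \eqref{conv2}, the nearness bound of Proposition \ref{nearness}, and the weak differentiability of Corollary \ref{cor}—has already been carried out upstream.
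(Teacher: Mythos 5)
Your proposal is correct and is essentially the paper's own argument: the paper proves this proposition in one line by "applying the convolution inequality \eqref{conv2} to Corollary \ref{cor} and Proposition \ref{nearness}," which is exactly your reduction, and you supply the details the paper leaves implicit (commutativity of $\hat{\ast}$, and the zero-average bookkeeping for both $L_{D_{\delta}\circ T}f-L_{T}f$ and $(-L_{T}(f_{0})S)^{\prime}$ that makes \eqref{conv2} applicable). Your constant $3\Vert\rho_{\xi}\Vert_{BV}\delta$ differs from the bare $\delta$ in the statement (and from the $\delta\,\mathrm{Var}(\rho_{0})$ of Proposition \ref{derivop}), but it has the form $K\delta$ required by $(LR3)$, which is all that is used downstream.
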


In this formula $(L_{T}(f_{0})S)^{\prime }$ should be interpreted as a
measure.

\begin{remark}
\label{bvcase}In the case where $L_{T}:BV[0,1]\rightarrow BV[0,1]$ is
continuous (like for example in piecewise expanding maps, but this is not
restricted to that, a bounded amount of contraction could be allowed) then $%
L_{T}:BV([0,1])\rightarrow BV(\chi _{S})$ is automatically continuous.
\end{remark}

\begin{remark}
In the case where $L_{T}(f_{0})S$ is not of bounded variation and $%
(L_{T}(f_{0})S)^{\prime }$ only converges in the dual of the space of
Lipschitz densities. In this case $\rho _{\xi }\hat{\ast}(-L_{T}(f_{0})S)^{%
\prime }$ could not be defined everywhere when $\rho _{\xi }$ is only a
bounded variation kernel. However, in the case where $\rho _{\xi }$ is
Lipschitz we have that $\rho _{\xi }\hat{\ast}(-L_{T}(f_{0})S)^{\prime }$ is
well defined and the limit $($\ref{eq:noiseresponse}$)$ holds with a
convergence in $L^{\infty }$.
\end{remark}

\begin{remark}
\label{rem_GP17} We remark that the theorem above requires very mild
assumptions. To be more explicit compare it with the following result of 
\cite{GP}.

\begin{proposition}[\protect\cite{GP}]
Let $T_{\delta }$ be a family of nearby expanding maps such that 
\begin{equation*}
T_{\delta }=T_{0}+\delta p+o_{C^{3}}(\delta )
\end{equation*}%
where $p\in C^{3}$ and $o_{C^{3}}(\delta )$ is the error term with respect
to the $C^{3}$ norm of $\frac{1}{\delta }(T_{\delta }-T_{0})-\varepsilon $,
and for their respective operators it holds a linear response statement. Let 
$\bar{L}_{\delta }$ be their transfer operators and $\hat{L}$ the related
derivative operator. Let $w\in C^{3}(X,\mathbb{R})$. For each $x\in X$ we
can write 
\begin{equation*}
\begin{split}
\hat{L}w(x)& =\lim_{\delta \rightarrow 0}\left( \frac{\bar{L}_{\delta }w(x)-%
\bar{L}_{0}w(x)}{\delta }\right) \\
& =-\bar{L}_{0}\left( \frac{w\varepsilon ^{\prime }}{T_{0}^{\prime }}\right)
(x)-\bar{L}_{0}\left( \frac{\varepsilon w^{\prime }}{T_{0}^{\prime }}\right)
(x)+\bar{L}_{0}\left( \frac{\varepsilon T_{0}^{\prime \prime }}{%
T_{0}^{\prime 2}}w\right) (x)
\end{split}%
\end{equation*}%
and the convergence is also in the $C^{1}$ topology.
\end{proposition}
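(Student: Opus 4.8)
The plan is to reduce the whole computation to the duality between the transfer operator $\bar{L}_0$ and the composition (Koopman) operator, differentiate under the integral sign, and then recognize the outcome as a derivative of $\bar{L}_0$ applied to a product, which splits into the three stated terms. Throughout I take $\varepsilon = \lim_{\delta\to 0}\tfrac1\delta(T_\delta - T_0)$ to be the first-order perturbation direction, so that $\tfrac{d}{d\delta}\big|_{0} T_\delta = \varepsilon$, the $o_{C^3}(\delta)$ remainder being negligible at first order.

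First I would test $\hat{L}w$ against a smooth observable $\phi$. Since the transfer operators are defined by the adjoint relation $\int (\phi\circ T_\delta)\,w\,dm = \int \phi\cdot \bar{L}_\delta w\,dm$, we get
\[
\int \phi \cdot \frac{\bar{L}_{\delta} w - \bar{L}_{0} w}{\delta}\,dm = \int \frac{\phi\circ T_{\delta} - \phi\circ T_{0}}{\delta}\, w\, dm .
\]
Using $T_\delta = T_0 + \delta\varepsilon + o_{C^3}(\delta)$ together with dominated convergence (the $C^3$ control on the perturbation provides a uniform bound on the difference quotient), the right-hand side tends to $\int (\phi'\circ T_0)\,\varepsilon\, w\, dm$. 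Dualizing once more, with $\phi'$ as test function and $\varepsilon w$ as density, this equals $\int \phi'\cdot \bar{L}_0(\varepsilon w)\, dm$, and an integration by parts (whose boundary terms vanish in the setting considered) yields
\[
\int \phi\cdot \hat{L}w\, dm = -\int \phi\cdot \big[\bar{L}_{0}(\varepsilon w)\big]'\, dm .
\]
As $\phi$ is arbitrary, $\hat{L}w = -\big[\bar{L}_{0}(\varepsilon w)\big]'$.

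The second step is the pointwise computation of $[\bar{L}_0 g]'$ for a $C^1$ density $g$. Writing $\bar{L}_0 g(x) = \sum_{T_0(y)=x} g(\psi_y(x))\,\psi_y'(x)$ over the inverse branches $\psi_y$ with $\psi_y'(x) = 1/T_0'(\psi_y(x))$, I would differentiate each summand by the product rule, substitute $\psi_y'' = -T_0''(\psi_y)/T_0'(\psi_y)^{3}$, and factor out $1/T_0'$ to obtain the branch-independent identity
\[
\big[\bar{L}_{0} g\big]' = \bar{L}_{0}\!\left(\frac{g'}{T_{0}'} - \frac{g\,T_{0}''}{(T_{0}')^{2}}\right) .
\]
Applying this with $g=\varepsilon w$ and expanding $g' = \varepsilon' w + \varepsilon w'$ reproduces exactly the three terms
\[
\hat{L}w = -\bar{L}_{0}\!\left(\frac{w\varepsilon'}{T_{0}'}\right) - \bar{L}_{0}\!\left(\frac{\varepsilon w'}{T_{0}'}\right) + \bar{L}_{0}\!\left(\frac{\varepsilon T_{0}''}{(T_{0}')^{2}}w\right) .
\]

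The main obstacle is upgrading the convergence from the weak (tested against $\phi$) sense to the claimed $C^1$ topology: to control $(\bar{L}_\delta w - \bar{L}_0 w)/\delta$ uniformly together with its first derivative one must bound the $o_{C^3}(\delta)$ remainder in $x$ and $\partial_x$, which is precisely where the $C^3$ hypothesis on the perturbation and the $C^3$ regularity of $w$ enter, and one must check that the inverse branches $\psi_{y,\delta}$ depend smoothly on $\delta$ (guaranteed by uniform expansion and the implicit function theorem). The remaining ingredients — the two duality identities and the branch-wise differentiation — are then routine.
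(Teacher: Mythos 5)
The paper itself offers no proof of this statement: it is imported verbatim from \cite{GP} inside Remark \ref{rem_GP17}, so the only comparison available is with the derivation in that reference. Your argument is correct and reaches the formula by a slightly different route. In \cite{GP} the formula is obtained by differentiating the branch sum $\bar{L}_{\delta }w(x)=\sum_{T_{\delta }(y)=x}w(y)/T_{\delta }^{\prime }(y)$ directly in $\delta $, using the implicit relation $T_{\delta }(y_{\delta }(x))=x$ to get $\partial _{\delta }y_{\delta }=-\varepsilon (y_{0})/T_{0}^{\prime }(y_{0})$ and then regrouping the resulting terms under $\bar{L}_{0}$; you instead pass through the Koopman duality to identify $\hat{L}w=-\left[ \bar{L}_{0}(\varepsilon w)\right] ^{\prime }$ in one line, and only then expand the derivative of $\bar{L}_{0}$ branchwise via the commutation identity $[\bar{L}_{0}g]^{\prime }=\bar{L}_{0}\bigl(g^{\prime }/T_{0}^{\prime }-gT_{0}^{\prime \prime }/(T_{0}^{\prime })^{2}\bigr)$, which I checked and which does reproduce the three stated terms with the correct signs. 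The duality step buys a compact closed form $-[\bar{L}_{0}(\varepsilon w)]^{\prime }$ --- which is also structurally the same as the derivative operator $\rho _{0}\hat{\ast}(-L_{T_{0}}(f)S)^{\prime }$ used in the body of this paper, since for $\varepsilon =S\circ T_{0}$ one has $\bar{L}_{0}(\varepsilon w)=S\cdot \bar{L}_{0}w$ --- at the cost of first establishing the limit only weakly against test functions; the direct branch differentiation of \cite{GP} yields the pointwise and $C^{1}$ statement immediately, provided one controls the $\delta $-dependence of the inverse branches. Your closing paragraph correctly isolates the weak-to-$C^{1}$ upgrade as the only nonroutine point, and the uniform expansion plus $C^{3}$ control you invoke is indeed what closes it, so I see no gap.
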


In this case by using this very explicit description of the linear response
the equation \eqref{eq:noiseresponse} becomes 
\begin{equation*}
\underset{\delta \rightarrow 0}{\lim }\left\Vert \frac{(L_{\xi ,T_{\delta
}}-L_{\xi ,T_{0}})}{\delta }f_{0}-\rho _{\xi }\ast (-L_{T}(f_{0})S)^{\prime
}\right\Vert _{1}=0.
\end{equation*}

In fact it is easy to see that the result above can be included in the
present framework by considering $\rho_\xi * \bar{L}_{\delta } $ and
obtaining, as a result, $\rho_\xi * \hat{L} $.
\end{remark}


Applying Theorem \ref{th:linearresponse} we have hence the following
corollary, summarizing the existence of linear response for systems with
additive noise and perturbations as considered in Proposition \ref{pertmap}.

\begin{corollary}
\label{cormap2}Let $L_{0,T_{0}}$ be the transfer operator of a system with
additive noise, as above. Suppose that $L_{0,T_{0}}$satisfies assumption
LR1. Let $T_{\delta }=D_{\delta }\circ T$ be a perturbation of $T_{0}$ as in
Proposition \ref{pertmap}. Let $L_{0,T_{\delta }}$ be the transfer operator
associated to the system with additive noise whose deterministic part is $%
T_{\delta }$ \ and the noise is distributed according to $\rho _{0}\in BV.$
Let $f_{\delta }$ be fixed probability measures of the transfer operators $%
L_{0,T_{\delta }},$ i.e. $L_{0,T_{\delta }}f_{\delta }=f_{\delta }$ then $%
R(z,L_{0,T_{0}}):V\rightarrow $ $V$ is a continuous operator and we have the
following Linear Response formula 
\begin{equation}
\lim_{\delta \rightarrow 0}\left\Vert \frac{f_{\delta }-f_{0}}{\delta }%
-R(1,L_{0,T_{0}})\rho _{0}\hat{\ast}(-L_{T_{0}}(f)S)^{\prime }\right\Vert
_{1}=0.  \label{finalres1.2}
\end{equation}
\end{corollary}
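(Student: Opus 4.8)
The plan is to recognize this corollary as a direct specialization of Theorem \ref{th:linearresponse} to the additive-noise setting with a map perturbation, so the whole task reduces to checking hypotheses (LR0)--(LR3) and then quoting the theorem. First I would fix the functional framework: take the weak norm to be $\|\cdot\|_w=\|\cdot\|_1$, so that $B_w=L^1$ and hence $V_w=V$. With this choice the response formula \eqref{finalres1.2} is exactly the conclusion \eqref{linresp} of the theorem, once the derivative operator is identified as $\dot L f_0=\rho_0\hat{\ast}(-L_{T_0}(f_0)S)'$.

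Next I would dispatch the three structural assumptions. Assumption (LR1) is granted by hypothesis. For (LR0), observe that here only the map is perturbed while the kernel $\rho_0$ stays fixed; Lemma \ref{lem:fixedpoint} then produces for each $\delta$ a fixed probability density $f_\delta\in BV$ with $\|f_\delta\|_{BV}\leq 9\|\rho_0\|_{BV}$, and since the right-hand side does not depend on $\delta$ this furnishes the uniform bound $M:=9\|\rho_0\|_{BV}$. For (LR2), nonsingularity of the maps together with Corollary \ref{regcor}(1) gives that $L_{0,T_0}\colon L^1\to BV$ is continuous, while the regularization $B_w\to L^1$ is automatic because $B_w=L^1$ and $L_{0,T_0}$ is an $L^1$ weak contraction. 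Existence and uniqueness of the unperturbed fixed point $f_0\in BV$ is then supplied by Lemma \ref{extuniq}.

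The substantive step is (LR3), which I would read off Proposition \ref{pertmap} applied with $\xi=0$. Its small-perturbation estimate $\|L_{0,T_\delta}f-L_{0,T_0}f\|_1\leq \delta\|f\|_1$ yields both operator-norm bounds at once with $K=1$: the $L^1\to(B_w,\|\cdot\|_w)=L^1$ bound is immediate, and the $BV\to V$ bound follows because $\|f\|_1\leq\|f\|_{BV}$ and because $(L_{0,T_0}-L_{0,T_\delta})f$ automatically has zero average (both operators are Markov, hence integral-preserving). The derivative formula \eqref{eq:noiseresponse}, evaluated at the fixed point $f_0\in BV$, then identifies the limit as $\dot L f_0=\rho_0\hat{\ast}(-L_{T_0}(f_0)S)'$ with convergence in $\|\cdot\|_1=\|\cdot\|_w$.

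The one point that I expect to require a little care, rather than a genuine obstacle, is confirming that this $\dot L f_0$ lands in the target space $V_w=V$ as (LR3) demands. Here $(-L_{T_0}(f_0)S)'$ is the distributional derivative of the $BV$ function $L_{T_0}(f_0)S$, which vanishes at both endpoints since $S(0)=S(1)=0$; hence it is a finite signed measure of total mass zero. Applying the convolution estimate \eqref{conv2} with this zero-average measure and $g=\rho_0\in BV$ shows $\rho_0\hat{\ast}(-L_{T_0}(f_0)S)'\in L^1$, and since the boundary-reflecting convolution preserves total mass it has zero average, so it belongs to $V$. With (LR0)--(LR3) verified, Theorem \ref{th:linearresponse} applies verbatim, giving continuity of $R(z,L_{0,T_0})$ on $V$ and the response formula \eqref{finalres1.2}. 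I anticipate no deeper difficulty: the genuine content has been front-loaded into Lemma \ref{lem:fixedpoint}, Corollary \ref{regcor} and Proposition \ref{pertmap}, and this corollary is their assembly.
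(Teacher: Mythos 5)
Your proposal is correct and follows exactly the route the paper intends: the corollary is stated as a direct application of Theorem \ref{th:linearresponse}, with (LR0) supplied by Lemma \ref{lem:fixedpoint}, (LR2) by Corollary \ref{regcor}, and (LR3) by Proposition \ref{pertmap}, all with the weak norm taken to be $\Vert\cdot\Vert_{1}$. Your additional check that $\dot{L}f_{0}=\rho_{0}\hat{\ast}(-L_{T_{0}}(f_{0})S)^{\prime}$ lies in $V$ (zero total mass of the derivative measure since $S(0)=S(1)=0$, plus the estimate \eqref{conv2}) is a detail the paper leaves implicit, and it is handled correctly.
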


\subsection{Perturbing the noise}

\label{secptn}

In this section we consider the situation where {given} a system with noise,
as described above, one is interested in understanding how the stationary
measure changes if the structure of the noise changes. In particular to
study what happens if the radius of the noise changes. Consider hence a
nonsingular map $T$ and a family of kernels {$\rho _{\xi }\in BV$, $\xi \in
\lbrack 0,\bar{\xi})$ } and the associated family of transfer operators $%
L_{\xi ,T}f:=\rho _{\xi }\hat{\ast}L_{T}f$ \ where the additive noise is
changed as $\xi $ changes. {Note that we do not mean to consider
\textquotedblleft zero-noise"-limits, as the transfer operator without noise
is not typically a regularizing one (remark that from the technical point of
view, the zero noise limit correspond to }$\rho _{0}$ being equal to the
dirac delta placed in \thinspace $0$, which is not a $BV$ density){. }Let us
suppose that $f_{\xi }$ is the stationary probability measure for $L_{\xi
,T}.$ Here the {important} limit {to be considered} {is} the
\textquotedblleft derivative kernel" $\dot{\rho}:=\underset{\xi \rightarrow 0%
}{\lim }\frac{\rho _{\xi }-\rho _{0}}{\xi }.$ {\ We will see interesting
cases where the limit converges in the Wasserstein $\Vert \cdot \Vert _{W}$
norm (see e.g. Example \ref{exnoise}). In this case applying Theorem \ref%
{th:linearresponse} we get a linear response statement with convergence in
the $\Vert \cdot \Vert _{W}$ norm (see Corollary \ref{cornoise} and
Corollary \ref{cor:tentspertubartion}).} In order to apply the theorem, we
show how its assumption $(LR3)$ can be verified in this case.

First we show that under simple and natural assumptions on the perturbation
of the noise kernel we have that the perturbed transfer operator is a small
perturbation of the unperturbed one (first part of $(LR3)$).

\begin{proposition}
\label{nearness copy(1)} Let $f\in L^{1}$, suppose $\|\rho _{\xi }-\rho
_{0}\|_{1}\leq C\xi $, then 
\begin{equation}
\|L_{\xi ,T}f-L_{0,T}f\|_{W}\leq \|L_{\xi ,T}f-L_{0,T}f\|_{1}\leq C\xi
\|f\|_{1}.
\end{equation}
\end{proposition}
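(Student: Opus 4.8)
The plan is to reduce everything to the defining formula $L_{\xi ,T}f=\rho _{\xi }\hat{\ast}L_{T}(f)$ and to exploit that $\hat{\ast}$ is linear in its first slot. First I would note that the map $\rho \mapsto \rho \hat{\ast}g$ is linear, since by Definition \ref{def:hatconv} it is the composition of the linear operations $\rho \mapsto \hat{\rho}=1_{[0,1]}\rho$, convolution against the fixed function $\hat{g}$, and the pushforward $\pi _{\ast }$. Hence
\begin{equation*}
L_{\xi ,T}f-L_{0,T}f=\rho _{\xi }\hat{\ast}L_{T}(f)-\rho _{0}\hat{\ast}L_{T}(f)=(\rho _{\xi }-\rho _{0})\hat{\ast}L_{T}(f),
\end{equation*}
so the whole statement becomes a single $\hat{\ast}$-convolution estimate. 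The first inequality $\Vert \cdot \Vert _{W}\leq \Vert \cdot \Vert _{1}$ is then immediate from the general comparison of norms already recorded in the excerpt.

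For the second inequality, which carries the actual content, I would unfold the definition and write
\begin{equation*}
(\rho _{\xi }-\rho _{0})\hat{\ast}L_{T}(f)=\pi _{\ast }\bigl(\widehat{(\rho _{\xi }-\rho _{0})}\ast \widehat{L_{T}(f)}\bigr).
\end{equation*}
Since $\pi _{\ast }$ is a weak contraction for the $\Vert \cdot \Vert _{1}$ norm (a fact used already in the proof of Lemma \ref{convoocopy1}), it suffices to bound the flat convolution on $\mathbb{R}$. There I would apply Young's inequality for the convolution of $L^{1}$ functions, i.e. the special case of [\cite{Ru}, Theorem 1.3.2] quoted in the excerpt, to get
\begin{equation*}
\bigl\Vert \widehat{(\rho _{\xi }-\rho _{0})}\ast \widehat{L_{T}(f)}\bigr\Vert _{1}\leq \bigl\Vert \widehat{(\rho _{\xi }-\rho _{0})}\bigr\Vert _{1}\,\bigl\Vert \widehat{L_{T}(f)}\bigr\Vert _{1}.
\end{equation*}
Finally, since $\rho _{\xi },\rho _{0}$ and $L_{T}(f)$ already live on $[0,1]$, the hats leave the $L^{1}$ norms unchanged, so $\Vert \widehat{(\rho _{\xi }-\rho _{0})}\Vert _{1}=\Vert \rho _{\xi }-\rho _{0}\Vert _{1}\leq C\xi $ by hypothesis, while $\Vert \widehat{L_{T}(f)}\Vert _{1}=\Vert L_{T}(f)\Vert _{1}\leq \Vert f\Vert _{1}$ because $L_{T}$, as the transfer operator of a nonsingular map, is a weak $L^{1}$-contraction. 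Multiplying gives the claimed bound $C\xi \Vert f\Vert _{1}$.

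There is no genuinely hard step here; the only two points requiring care are the linearity of $\hat{\ast}$ in the first argument (so that the difference can be pulled inside a single convolution) and applying the convolution inequality on $\mathbb{R}$ \emph{before} pushing forward by $\pi _{\ast }$, rather than directly on the interval. Everything else is a direct invocation of tools already established. In fact this proposition is lighter than Lemma \ref{convoocopy1}: the $\Vert \cdot \Vert _{W}\to \Vert \cdot \Vert _{1}$ estimates there needed the zero-average hypothesis to extract cancellation, whereas here only the plain $L^{1}\to L^{1}$ boundedness of convolution is used, so no cancellation is required.
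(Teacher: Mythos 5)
Your proof is correct and follows essentially the same route as the paper: both reduce the difference to $(\rho_{\xi}-\rho_{0})\hat{\ast}L_{T}(f)$ by linearity, bound the flat convolution via Young's inequality $\Vert\rho\ast g\Vert_{1}\leq\Vert\rho\Vert_{1}\Vert g\Vert_{1}$, use that $\pi_{\ast}$ is an $L^{1}$ weak contraction, and finish with $\Vert L_{T}(f)\Vert_{1}\leq\Vert f\Vert_{1}$ and the norm comparison $\Vert\cdot\Vert_{W}\leq\Vert\cdot\Vert_{1}$. The paper's version is just more terse; no substantive difference.
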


\begin{proof}
The proof is a direct application of the well known fact $\Vert \rho \ast
g\Vert _{1}\leq \Vert \rho \Vert _{1}\Vert g\Vert _{1}$. By this it also
holds 
\begin{equation}
\Vert \rho \hat{\ast}g\Vert _{1}=\pi _{\ast }(\hat{\rho}\ast \hat{g})\leq
\Vert \rho \Vert _{1}\Vert g\Vert _{1}  \label{convl1}
\end{equation}

and $\Vert \rho _{0}\hat{\ast}g-\rho _{\xi }\hat{\ast}g\Vert _{1}\leq \Vert
\rho _{\xi }-\rho _{0}\Vert _{1}\Vert g\Vert _{1}$.
\end{proof}

Then we consider the convergence of the derivative operator (second part of $%
(LR3)$).

\begin{proposition}
\label{plusnoise}Suppose $||~||_{w}$ is either the $L^{1}$ or the $||~||_{W}$
norm. Suppose there is a Borel measure with sign $\dot{\rho}$ such that 
\begin{equation*}
\underset{\xi \rightarrow 0}{\lim }\left\Vert \frac{\rho _{\xi }-\rho _{0}}{%
\xi }-\dot{\rho}\right\Vert _{w}=0.
\end{equation*}%
Suppose $T$ is nonsingular and $f_{0}\in L^{1}[0,1]$, then 
\begin{equation*}
\underset{\xi \rightarrow 0}{\lim }\left\Vert \frac{(L_{\xi ,T}-L_{0,T})}{%
\xi }f_{0}-\dot{\rho}\hat{\ast}L_{T}(f_{0})\right\Vert _{w}=0.
\end{equation*}
\end{proposition}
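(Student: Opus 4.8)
The plan is to reduce the whole statement to the linearity of the operation $\hat{\ast}$ in its first argument, followed by a single application of the convolution estimates already established. First I would note that, by the very definition of the operators, $L_{\xi ,T}f_{0}=\rho _{\xi }\hat{\ast}L_{T}(f_{0})$ and $L_{0,T}f_{0}=\rho _{0}\hat{\ast}L_{T}(f_{0})$, so that writing $g:=L_{T}(f_{0})$ and $h_{\xi }:=\frac{\rho _{\xi }-\rho _{0}}{\xi }-\dot{\rho}$ one has
\begin{equation*}
\frac{(L_{\xi ,T}-L_{0,T})}{\xi }f_{0}-\dot{\rho}\hat{\ast}L_{T}(f_{0})=\left( \frac{\rho _{\xi }-\rho _{0}}{\xi }-\dot{\rho}\right) \hat{\ast}g=h_{\xi }\hat{\ast}g.
\end{equation*}
Since $T$ is nonsingular, $L_{T}$ acts as a weak contraction on $L^{1}$, hence $g\in L^{1}$ with $\Vert g\Vert _{1}\leq \Vert f_{0}\Vert _{1}<\infty $. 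The problem is thus reduced to showing $\Vert h_{\xi }\hat{\ast}g\Vert _{w}\rightarrow 0$, knowing by hypothesis that $\Vert h_{\xi }\Vert _{w}\rightarrow 0$; it remains only to transport this smallness through the convolution.

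Next I would split according to the choice of the weak norm. If $\Vert \cdot \Vert _{w}=\Vert \cdot \Vert _{1}$, the conclusion follows at once from the standard convolution inequality \eqref{convl1}, namely $\Vert h_{\xi }\hat{\ast}g\Vert _{1}\leq \Vert h_{\xi }\Vert _{1}\Vert g\Vert _{1}\leq \Vert h_{\xi }\Vert _{1}\Vert f_{0}\Vert _{1}$, which tends to $0$. If instead $\Vert \cdot \Vert _{w}=\Vert \cdot \Vert _{W}$, I would invoke estimate \eqref{conv1} of Lemma \ref{convoocopy1}, giving $\Vert h_{\xi }\hat{\ast}g\Vert _{W}\leq \Vert h_{\xi }\Vert _{W}\Vert g\Vert _{1}$; to apply it I must first check that $h_{\xi }$ is a zero-average measure. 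This holds because each of $\rho _{\xi },\rho _{0}$ is a probability density, so $\frac{\rho _{\xi }-\rho _{0}}{\xi }$ has zero average, and testing the $W$-convergence $\frac{\rho _{\xi }-\rho _{0}}{\xi }\rightarrow \dot{\rho}$ against the constant function $1$ (which is admissible, being $1$-Lipschitz and bounded by $1$) yields $\dot{\rho}([0,1])=0$; hence $h_{\xi }([0,1])=0$. With this verified, $\Vert h_{\xi }\hat{\ast}g\Vert _{W}\leq \Vert h_{\xi }\Vert _{W}\Vert f_{0}\Vert _{1}\rightarrow 0$.

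I do not expect a substantial obstacle here: the argument is in essence the continuity of the map $f\mapsto f\hat{\ast}g$ in the relevant norm, combined with the linearity that collapses the difference quotient onto the single factor $h_{\xi }$. The one point requiring genuine care is the zero-average verification needed to invoke \eqref{conv1} in the Wasserstein case, since that estimate fails without it; once $h_{\xi }([0,1])=0$ is secured, both cases are dispatched uniformly by the corresponding convolution bound.
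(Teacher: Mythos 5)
Your proof is correct and follows essentially the same route as the paper's: collapse the difference quotient by linearity of $\hat{\ast}$ onto the factor $\frac{\rho _{\xi }-\rho _{0}}{\xi }-\dot{\rho}$, then apply \eqref{convl1} in the $L^{1}$ case and \eqref{conv1} in the Wasserstein case. You are in fact slightly more careful than the paper, which invokes \eqref{conv1} without explicitly verifying the zero-average hypothesis that you correctly check by testing against the constant function $1$.
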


\begin{proof}
The proof is a direct computation%
\begin{equation*}
\frac{\rho _{\xi }\hat{\ast}L_{T}f-\rho _{0}\hat{\ast}L_{T}f}{\xi }=\frac{%
\rho _{\xi }-\rho _{0}}{\xi }\hat{\ast}L_{T}f.
\end{equation*}%
We have that $L_{T}f_{0}\in L^{1}$. In the case where $||~||_{w}=||~||_{W}$,
by $($\ref{conv1}$)$ we get the statement. In the case $||~||_{w}$ is the $%
L^{1}$ norm we have the same result by using $(\ref{convl1})$.
\end{proof}

We remark that by this computation, assumption $(LR3)$ of Theorem \ref%
{th:linearresponse} is satisfied with a derivative operator $\dot{L}(f)=\dot{%
\rho}\hat{\ast}L_{T}(f).$

By Theorem \ref{th:linearresponse} and Propositions \ref{plusnoise}, \ref%
{nearness copy(1)} in the case the choice of the weak norm is $%
||~||_{w}=||~||_{W}$, we get the following linear response statement for
perturbations of the noise distribution $\rho _{0}.$

\begin{corollary}
\label{cornoise} Let $L_{0,T_{0}}$ be the transfer operator of a system with
additive noise, as above. Suppose that $L_{0,T_{0}}$satisfies assumption LR1
(mixing).\ Suppose $T_{0}$ is Lipschitz, $\Vert \rho _{\xi }-\rho _{0}\Vert
_{1}\leq C\xi $, and that there exists $\dot{\rho}\in BS$ such that 
\begin{equation*}
\underset{\xi \rightarrow 0}{\lim }\left\Vert \frac{\rho _{\xi }-\rho _{0}}{%
\xi }-\dot{\rho}\right\Vert _{W}=0
\end{equation*}%
then $R(z,L_{0,T_{0}}):V_{W}\rightarrow $ $V_{W}$ is a continuous operator
and we have the following Linear Response formula 
\begin{equation}
\lim_{\delta \rightarrow 0}\left\Vert \frac{f_{\delta }-f_{0}}{\delta }%
-R(1,L_{0})\dot{\rho}\hat{\ast}L_{T}(f)\right\Vert _{W}=0.
\end{equation}
\end{corollary}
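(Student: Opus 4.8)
The plan is to read the statement off Theorem \ref{th:linearresponse}, specialized to the choice of weak space $B_{w}=BS$ equipped with the Wasserstein--Kantorovich norm, i.e. $\|\cdot\|_{w}=\|\cdot\|_{W}$ of \eqref{Wass}, with perturbation parameter $\xi$ and unperturbed operator $L_{0}:=L_{0,T_{0}}$. Since $\|\cdot\|_{W}\le\|\cdot\|_{1}$ on $BS\cap L^{1}$, this is an admissible pair for the theorem. I would let $f_{\xi}$ denote the stationary probability measures of $L_{\xi,T}$ and verify the four hypotheses $(LR0)$--$(LR3)$ one by one; once they hold, Theorem \ref{th:linearresponse} delivers at once the continuity of $R(z,L_{0}):V_{W}\to V_{W}$ and the response formula \eqref{linresp}, which in this setting is exactly $R(1,L_{0})\dot{\rho}\hat{\ast}L_{T}(f)$.

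Two of the assumptions are essentially for free. Assumption $(LR1)$ (mixing of the unperturbed operator) is among the hypotheses. For $(LR2)$, recall that $T_{0}$ is nonsingular by the standing assumption of this subsection and Lipschitz by hypothesis, so Corollary \ref{regcor} applies in both its items: item~1 gives that $L_{0}:L^{1}\to BV$ is continuous, and item~2 gives that $L_{0}:(BS,\|\cdot\|_{W})\to L^{1}$ is continuous, which together are precisely the regularization demanded by $(LR2)$ for the present weak norm.

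For $(LR3)$ I would invoke the two propositions of this subsection. The small-perturbation bounds come from Proposition \ref{nearness copy(1)}: since $\|\rho_{\xi}-\rho_{0}\|_{1}\le C\xi$, one has $\|(L_{0}-L_{\xi,T})f\|_{1}\le C\xi\|f\|_{1}$, and this yields both $\||L_{0}-L_{\xi,T}|\|_{L^{1}\to(BS,\|\cdot\|_{W})}\le C\xi$ (using $\|\cdot\|_{W}\le\|\cdot\|_{1}$) and $\||L_{0}-L_{\xi,T}|\|_{BV\to V}\le C\xi$ (using $\|f\|_{1}\le\|f\|_{BV}$); that $(L_{0}-L_{\xi,T})f$ lands in the zero-average space $V$ is clear because both are Markov operators, so the difference integrates to $0$. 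The convergence of the derivative operator is exactly Proposition \ref{plusnoise} with $\|\cdot\|_{w}=\|\cdot\|_{W}$, identifying $\dot{L}(f)=\dot{\rho}\hat{\ast}L_{T}(f)$. Here I would note that $\dot{\rho}$ inherits zero average from the $f_{\xi}$ (testing $\|\cdot\|_{W}$ against the constant $1$), that $L_{T}(f_{0})\in L^{1}$ since $T$ is nonsingular and $f_{0}\in L^{1}$, and that \eqref{conv1} then makes $\dot{\rho}\hat{\ast}L_{T}(f_{0})$ a well-defined element of $V_{W}$ controlled in $\|\cdot\|_{W}$, as the theorem requires of $\dot{L}f_{0}$.

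The step I expect to be the main obstacle is $(LR0)$, the uniform bounded-variation bound on the fixed points. Lemma \ref{lem:fixedpoint} provides, for each $\xi$, a stationary density $f_{\xi}\in BV$ with $\|f_{\xi}\|_{BV}\le 9\|\rho_{\xi}\|_{BV}$, so a uniform bound $\|f_{\xi}\|_{BV}\le M$ reduces to $\sup_{\xi}\|\rho_{\xi}\|_{BV}<\infty$. This is the natural reading of the hypothesis that $(\rho_{\xi})$ is a family of $BV$ kernels, and it holds in the applications of Section \ref{sec:finalmain}; if one does not wish to assume it outright, the weaker requirement $\|\rho_{\xi}\|_{BV}=o(\xi^{-1})$ suffices by Remark \ref{weakLR0}. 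With $(LR0)$--$(LR3)$ all in place, Theorem \ref{th:linearresponse} applies verbatim and gives the asserted linear response in the $\|\cdot\|_{W}$ norm.
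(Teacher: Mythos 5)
Your proposal is correct and takes essentially the same route as the paper, which obtains the corollary by applying Theorem \ref{th:linearresponse} with $\Vert\cdot\Vert_{w}=\Vert\cdot\Vert_{W}$, using Propositions \ref{nearness copy(1)} and \ref{plusnoise} for $(LR3)$, Corollary \ref{regcor} for $(LR2)$, and Lemma \ref{lem:fixedpoint} for $(LR0)$; your observation that $(LR0)$ implicitly requires $\sup_{\xi}\Vert\rho_{\xi}\Vert_{BV}<\infty$ matches the paper's own remark to that effect. The only (inconsequential) slip is that $\dot{\rho}$ inherits zero average from the probability densities $\rho_{\xi}$ (each difference $\rho_{\xi}-\rho_{0}$ has zero mass), not from the $f_{\xi}$.
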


If the derivative operator converges in the $L^{1}$ norm the choice of the
weak norm is $||~||_{w}=||~||_{1}$ and we get

\begin{corollary}
\label{cornoise copy(1)}Let $L_{0,T_{0}}$ be the transfer operator of a
system with additive noise, as above. Suppose that $L_{0,T_{0}}$satisfies
assumption $(LR1)$ (mixing).\ Suppose $T_{0}$ is nonsingular, $\Vert \rho
_{\xi }-\rho _{0}\Vert _{1}\leq C\xi $, and that there exists $\dot{\rho}\in
L^{1}$ such that 
\begin{equation*}
\underset{\xi \rightarrow 0}{\lim }\left\Vert \frac{\rho _{\xi }-\rho _{0}}{%
\xi }-\dot{\rho}\right\Vert _{1}=0
\end{equation*}%
then $R(z,L_{0,T_{0}}):V\rightarrow $ $V$ is a continuous operator and we
have the following Linear Response formula 
\begin{equation}
\lim_{\delta \rightarrow 0}\left\Vert \frac{f_{\delta }-f_{0}}{\delta }%
-R(1,L_{0})\dot{\rho}\hat{\ast}L_{T}(f)\right\Vert _{1}=0.
\end{equation}
\end{corollary}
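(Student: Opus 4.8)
The plan is to deduce the statement directly from Theorem~\ref{th:linearresponse}, choosing the weak norm to be the $L^1$ norm itself, so that $B_w=L^1$ and hence $V_w=V$. Under this choice the whole task reduces to checking the hypotheses $(LR0)$--$(LR3)$ for the family $L_{\xi,T_0}$, with the noise parameter $\xi$ playing the role of the abstract perturbation parameter, and then reading off the response formula \eqref{linresp} with derivative operator $\dot{L}f_0=\dot{\rho}\,\hat{\ast}\,L_T(f_0)$.

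I would first dispose of the regularization hypothesis $(LR2)$, since this is the point where the present statement genuinely differs from its Wasserstein twin Corollary~\ref{cornoise}. With $\|\cdot\|_w=\|\cdot\|_1$ the required continuity from the weak space to $L^1$ is merely continuity of $L_{0,T_0}\colon L^1\to L^1$, which is automatic because a Markov operator is a weak contraction in $L^1$. The second half of $(LR2)$, continuity of $L_{0,T_0}\colon L^1\to BV$, is exactly item~1 of Corollary~\ref{regcor} and holds because $T_0$ is nonsingular. In particular one does \emph{not} need $T_0$ Lipschitz here: that stronger hypothesis entered Corollary~\ref{cornoise} only to regularize from $BS$ with the Wasserstein norm into $L^1$ through item~2 of Corollary~\ref{regcor}.

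Next I would verify $(LR3)$. Proposition~\ref{nearness copy(1)} gives $\|L_{\xi,T_0}f-L_{0,T_0}f\|_1\le C\xi\|f\|_1$, which furnishes the operator bound $\|L_{0,T_0}-L_{\xi,T_0}\|_{L^1\to L^1}\le C\xi$; restricting to $f\in BV$ and noting that the difference of two Markov operators sends $BV$ into the zero-average space $V$, the same estimate yields $\|L_{0,T_0}-L_{\xi,T_0}\|_{BV\to V}\le C\xi$. The convergence of the operator difference quotient to $\dot{\rho}\,\hat{\ast}\,L_T(f_0)$ in the $L^1$ norm, which identifies the derivative operator required by $(LR3)$, is precisely Proposition~\ref{plusnoise} in the case $\|\cdot\|_w=\|\cdot\|_1$, using the hypotheses $\dot{\rho}\in L^1$ and $\|(\rho_\xi-\rho_0)/\xi-\dot{\rho}\|_1\to 0$. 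One must also check that $\dot{L}f_0\in V_w=V$: since $\rho_\xi,\rho_0$ are probability densities, each $\rho_\xi-\rho_0$ has zero integral, hence so does the $L^1$-limit $\dot{\rho}$, and the boundary-reflecting convolution multiplies total masses (because $\pi_\ast$ is a pushforward and ordinary convolution multiplies total masses), so $\int_{[0,1]}\dot{\rho}\,\hat{\ast}\,L_T(f_0)\,dm=0$.

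Finally, $(LR1)$ is hypothesized, and for $(LR0)$ Lemma~\ref{lem:fixedpoint} produces, for each $\xi$, a fixed probability density $f_\xi\in BV$ with $\|f_\xi\|_{BV}\le 9\|\rho_\xi\|_{BV}$; the required uniform bound then follows from uniform boundedness of $\|\rho_\xi\|_{BV}$ over the perturbation family, or, by Remark~\ref{weakLR0}, from the weaker requirement $\|\rho_\xi\|_{BV}=o(\xi^{-1})$. With all the hypotheses in place, Theorem~\ref{th:linearresponse} gives continuity of $R(z,L_{0,T_0})$ on $V$ together with the claimed linear response formula. I expect the only genuinely delicate point to be this last control of the $BV$ norms of the fixed points in $(LR0)$, since the standing hypotheses provide only $L^1$ control of the kernels $\rho_\xi$; every other step is simply a matter of selecting $L^1$ as the weak norm and invoking the already-established convolution and regularization estimates.
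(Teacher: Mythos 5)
Your proposal is correct and follows exactly the route the paper intends: the corollary is obtained by applying Theorem~\ref{th:linearresponse} with $\|\cdot\|_{w}=\|\cdot\|_{1}$, verifying $(LR0)$ via Lemma~\ref{lem:fixedpoint}, $(LR2)$ via item~1 of Corollary~\ref{regcor} (nonsingularity sufficing in place of the Lipschitz hypothesis of the Wasserstein version), and $(LR3)$ via Propositions~\ref{nearness copy(1)} and~\ref{plusnoise}. Your observation that the uniform $BV$ bound on the kernels $\rho_{\xi}$ needed for $(LR0)$ is only implicit in the statement is accurate and matches the paper's own standing convention for families of noise kernels.
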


\begin{example}
\label{exnoise}(perturbing the uniform noise) Let us now consider a concrete
and natural example of perturbation of the noise kernel in a system with
additive noise: the case where we consider uniform noise in a certain small
radius and change the radius. Let us hence consider a system with noise
kernel $\rho _{0}=a^{-1}1_{[-a/2,a/2]}$ with $a>0$ and slightly perturb it
to $\rho _{\xi }=(a+\xi )^{-1}1_{[-(a+\xi )/2,(a+\xi )/2]}$ \ By Proposition %
\ref{plusnoise} we consider $\dot{\rho}=\underset{\xi \rightarrow 0}{\lim }%
\frac{\rho _{\xi }-\rho _{0}}{\xi }$. We have convergence of this limit in
the $\Vert \cdot \Vert _{W}$ norm. A little computation yields 
\begin{equation}
\dot{\rho}=-a^{-2}1_{[-a/2,a/2]}+\frac{a^{-1}}{2}\delta _{-a/2}+\frac{a^{-1}%
}{2}\delta _{a/2}  \label{rodot}
\end{equation}%
where $\delta _{x}$ is the Dirac delta measure placed at $x$.
\end{example}

\section{Examples of application}

\label{sec:finalmain}

To show the flexibility and applicability of our theory we produce a family
of examples, namely: a family of maps satifying a kind of topological mixing
condition, circle rotations, a nontrivial map coming from real-life
applications which has very nonuniform properties, an iterated functions
system with deterministic and random components.

\subsection{Maps which are eventually onto and additive noise}

\label{PWappl}

Let us consider a family of nonsingular maps $T_{\delta }$ with additive
noise. We will suppose that $T_{0}$ is a nonsingular piecewise $C^{2}$ map%
\footnote{%
The space $[0,1]$ can be decomposed into a union of intervals $I_{i}$ such
that in every set $\overline{I_{i}}$ the map $T_{0}$ can be extended to a $%
C^{2}$ function $\overline{I_{i}}\rightarrow \lbrack 0,1].$} having good
distortion properties (which will be specified below) and $T_{\delta }$ is
obtained by composition with a family of diffeomorphism constructed as in
Section \ref{secptm}. More precisely let us suppose:

\begin{itemize}
\item[A1] $T_{0}$ is a nonsingular piecewise $C^{2}$ map whose transfer
operator $L_{T_{0}}:BV[0,1]\rightarrow BV[0,1]$ is continuous: there is $%
C\geq 0$ such that%
\begin{equation*}
||L_{T_{0}}f||_{BV}\leq C||f||_{BV}.
\end{equation*}

\item[A2] $T_{0}$ is eventually onto: for each open interval $I\subseteq
\lbrack 0,1]$ there is $n$ such that $T_{0}^{n}(I)=[0,1]$.

\item[A3] $T_{\delta }=D_{\delta }\circ T_{0}$ where $D_{\delta
}:[0,1]\rightarrow \lbrack 0,1]$ be a family of continuous bijections, such
that 
\begin{equation*}
D_{\delta }={\mathds{1}}+\delta S
\end{equation*}%
where $S$ is a $1$-Lipschitz map such that $S(0)=S(1)=0$ with support $\chi
_{S},$ finite union of intervals as in Proposition \ref{nearness}.

\item[A4] At each iteration of $T_{\delta }$, uniform random noise of radius 
$a>0,$ with noise kernel $\rho _{0}=a^{-1}1_{[-a/2,a/2]}$ is added, so that
the resulting transfer operator is given by 
\begin{equation*}
L_{0,T_{\delta }}f:=\rho _{0}\hat{\ast}L_{D_{\delta }\circ T_{0}}f
\end{equation*}%
as in Section \ref{sec:Main}.
\end{itemize}

\begin{remark}
We remark that piecewise expanding maps satisfy assimption $A1$ (see \cite[%
Section 3.1, Property E3]{Vi} or \cite{Li2} and in particular the Lasota
Yorke inequality satistied by these maps). The assumption is also verified
by maps which can be contracting in some part of the space.
\end{remark}

\begin{remark}
Assumption $A2$ \ {implies the} topological mixing of the map. In the case
of piecewise expanding maps this assumption is often taken to get a
topologically mixing system (see \cite[Section 3.1, Property E3]{Vi}). This
assumption is not the most general one possible to get linear response in
our framework, but it will keep the exposition simple.
\end{remark}

We will perturb both the map and the radius of the noise, applying \
Corollaries \ref{cormap2} and \ref{cornoise}. To apply it first we have to
check that the unperturbed operators we consider are mixing.

\begin{lemma}
Under the assumptions above, the transfer operator $L_{0,T_{0}}$ is mixing
i.e. it satisfies Assumption $(LR1)$ of Theorem \ref{th:linearresponse}.
\end{lemma}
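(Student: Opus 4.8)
The plan is to show that some iterate $L_{0,T_0}^{N}$ is an integral operator with a strictly positive kernel, and then to invoke the criterion recalled in Remark \ref{rmkLM} (Corollary 5.7.1 of \cite{LM}), according to which this property forces the mixing behaviour required in $(LR1)$. First I would make the kernel structure explicit: since $L_{0,T_0}f=\rho_0\hat{\ast}L_{T_0}f$ with $\rho_0\in L^\infty$, changing variables $z=T_0(y)$ in $\int\tilde\rho_0(x,z)(L_{T_0}f)(z)\,dz$ gives $(L_{0,T_0}f)(x)=\int_0^1 k_1(x,y)f(y)\,dy$ with $k_1(x,y)=\tilde\rho_0(x,T_0(y))$, where $\tilde\rho_0(x,u)$ denotes the reflecting-boundary density of the noise centred at $u$, evaluated at $x$. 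Thus $k_1\ge 0$, and $k_1(x,y)>0$ exactly when $x$ lies in the reflected $a/2$-neighbourhood of $T_0(y)$; in particular, for every $y$ the density $L_{0,T_0}\delta_y$ is strictly positive on an interval $I_y$ of length at least $a/2$.

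The core of the argument is a support-spreading estimate combined with a uniform covering time. If a nonnegative $h$ is positive on an open interval $U$, then $L_{T_0}h$ is positive on $T_0(U)$ (a preimage of each point lies in $U$, using nonsingularity), and convolving with $\rho_0>0$ preserves positivity on $\mathrm{int}\,T_0(U)$; iterating shows that $L_{0,T_0}^{m}h$ is positive on $\mathrm{int}\,T_0^{m}(U)$. Next I would upgrade assumption A2 to a uniform statement. Applying A2 to $(0,1)$ readily gives that $T_0$ is onto, hence $T_0^{\ell}([0,1])=[0,1]$ for all $\ell$. Partitioning $[0,1]$ into finitely many cells $U_1,\dots,U_p$ of length at most $a/4$, A2 provides exponents $n_i$ with $T_0^{n_i}(U_i)=[0,1]$, and ontoness lets me replace each $n_i$ by $N_0:=\max_i n_i$, so that $T_0^{N_0}(U_i)=[0,1]$ for every $i$. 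Since any interval of length at least $a/2$ contains at least one full cell $U_i$, we conclude $T_0^{N_0}(I)=[0,1]$ for every such interval $I$.

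Combining the two ingredients yields the claim: for arbitrary $y$, applying the spreading estimate to $h=L_{0,T_0}\delta_y$, which is positive on the interval $I_y$ of length at least $a/2$, gives that $L_{0,T_0}^{N_0+1}\delta_y$ is positive on $\mathrm{int}\,T_0^{N_0}(I_y)=(0,1)$, and a final convolution with $\rho_0$ makes it strictly positive on all of $[0,1]$. As $N_0$ is independent of $y$, the kernel $k_{N_0+2}(x,y)$ of $L_{0,T_0}^{N_0+2}$ is strictly positive, and $(LR1)$ follows from Remark \ref{rmkLM} (which in fact yields $\|L_{0,T_0}^{n}g\|_1\to 0$ for every zero-average $g\in L^1$, hence a fortiori for $g\in BV$ with $\int_I g\,dm=0$).

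The main obstacle is precisely the passage from the pointwise, $y$-dependent conclusion of A2 to a single exponent $N_0$ valid for all $y$ simultaneously; this is exactly what the ontoness of $T_0$ together with the finite-cell covering argument is designed to supply. A secondary point worth checking is that Corollary 5.7.1 of \cite{LM} requires only positivity of the iterated kernel, which holds everywhere on $[0,1]$ after the last convolution (the operator mapping $L^1$ into continuous densities because $\rho_0$ is bounded), and not a uniform pointwise lower bound; one should also verify the routine measure-theoretic bookkeeping in the spreading step, where the $C^2$ pieces of $T_0$ send open intervals to finite unions of intervals with open interior.
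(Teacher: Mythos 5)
Your argument is correct in substance, but it takes a genuinely different route from the paper's. The paper first establishes quasi-compactness of $L_{0,T_0}$ on $BV$ (via the Lasota--Yorke-type inequality coming from $(\ref{conv3})$ and the Hennion--Nussbaum argument), which reduces $(LR1)$ to excluding peripheral eigenvalues other than $1$; it then rules these out by contradiction, taking the real part of a putative peripheral eigenfunction, normalizing it to zero average, and showing that the support-spreading forced by $A2$ together with the positivity of the noise produces a strict $L^1$ contraction, contradicting $\limsup_i\Vert L_0^i\mu_1\Vert_1=\Vert\mu_1\Vert_1$. Crucially, the paper only needs $A2$ applied to a \emph{single} interval (one on which the eigenfunction's real part is positive), so it never needs a covering time uniform over intervals. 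You instead construct directly a strictly positive kernel for a fixed iterate and invoke the criterion of Remark \ref{rmkLM}; the price is exactly the uniformization of $A2$ that you identify, and your finite-cell covering argument plus the surjectivity of $T_0$ (which does follow from $A2$ applied to $(0,1)$) supplies it correctly. Your route is more elementary (no spectral theory), gives a uniform $N$ and hence contraction of \emph{all} zero-average $L^1$ functions rather than only $BV$ ones, while the paper's route is more robust to weakening $A2$ to a non-uniform covering property. Two points deserve care. First, check the exact hypotheses of Corollary 5.7.1 of \cite{LM}: the theorems in that section are typically phrased in terms of $\int_X\inf_y K_m(x,y)\,dx>0$ rather than mere a.e.\ positivity of $K_m$; if that stronger form is needed, your construction can still deliver it (the kernel $\tilde\rho_0$ is bounded below by $a^{-1}$ where positive, the $C^2$ pieces have bounded derivative, and your $N_0$ is uniform, so the spreading can be made quantitative), or alternatively a.e.\ positivity of $K_{N_0+2}$ together with the invariant density from Lemma \ref{lem:fixedpoint} yields asymptotic stability by the standard overlapping-supports/Harris argument. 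Second, the spreading step ``$g>0$ a.e.\ on $A$ implies $L_{T_0}g>0$ a.e.\ on $T_0(A)$'' uses that $T_0$ maps null sets to null sets, which is not implied by nonsingularity alone but does hold here because the $C^2$ pieces are Lipschitz; the paper uses the same fact implicitly, and your closing remark about finite unions of intervals handles the remaining bookkeeping.
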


\begin{proof}
If $L_{0}:=L_{0,T_{0}}$ is the transfer operator of the initial system with
additive noise, under the assumptions $A1$,...,$A4$, we have by $($\ref%
{conv3}$),$ as in Lemma \ref{lem:fixedpoint}%
\begin{eqnarray*}
||L_{0}^{n}f||_{BV} &\leq &\Vert L_{0}\Vert _{L^{1}\rightarrow BV}\Vert
L_{0}^{n-1}(f)\Vert _{1} \\
&\leq &0||f||_{BV}+9B\Vert f\Vert _{1}
\end{eqnarray*}%
where $B=||\rho _{0}||_{BV}$. Thus the system with noise satisfies a Lasota
Yorke inequality on $BV$ and $L^{1}$. Moreover $\Vert L_{0}f\Vert _{1}\leq
\Vert f\Vert _{1}$, and $L_{0}$ is compact as an operator $BV\rightarrow
L^{1},$ thus by the Hennion-Neussbaum argument (see \cite{Li2} Section 3.1,
Theorem 3.1) the spectral radius of $L_{0}$ as an operator on $BV$ is
bounded by $1$, the essential spectral radius is $0$. The spectrum is then
discrete, all points of the spectrum on the unit circle are eigenvalues with
finite multiplicity and the system is mixing if the only eigenvalue on the
unit circle is $1$ with multiplicity $1$.

Let us consider one positive stationary probability measure $\mu _{0}\in BV$
for $L_{0}$ (as was proved to exist in Lemma \ref{lem:fixedpoint}). \
Suppose that the system is not mixing, then there is a complex measure $\hat{%
\mu}\in BV$, $\hat{\mu}\neq \mu _{0}$ such that {$L_{0}^{i}\hat{\mu}=\lambda
^{i}\hat{\mu}{\ }$} for each $i\geq 0$ and {\ }$\lambda \in \mathbb{C}$, $%
|\lambda |=1$. Let $\mu $ be the real part of $\hat{\mu}.$ This is a signed
measure.  Since the transfer operator preserves real valued measures $%
L_{0}^{i}\mu $ is a real valued measure with bounded variation density and,
for each $\epsilon $ there are infinitely many $i$ such that {\ $\Vert
L_{0}^{i}\mu -\mu \Vert _{1}\leq \epsilon $ \ (}for each $\epsilon $, $%
|\lambda ^{i}-1|\leq \epsilon $ for infinitely many $i$). We also have that
there is $c\in \mathbb{R}$ such that $\mu _{1}=\mu +c\mu _{0}$ is a zero
average measure with density in $BV$ and 
\begin{equation*}
\underset{i\rightarrow \infty }{\lim \sup }{\Vert L_{0}^{i}\mu _{1}\Vert
_{1}=\Vert \mu _{1}\Vert _{1}}
\end{equation*}%
(indeed ${\Vert L_{0}^{i}\mu _{1}\Vert _{1}\leq \Vert \mu _{1}\Vert _{1}}$ $%
\ $and ${\Vert L_{0}^{i}\mu _{1}\Vert _{1}=||L_{0}^{i}(}\mu +c\mu
_{0})||_{1}=||{L_{0}^{i}(}\mu )-\mu +\mu +{L_{0}^{i}(}c\mu _{0})||_{1}\leq ||%
{L_{0}^{i}(}\mu )-\mu ||_{1}+{\Vert \mu _{1}\Vert _{1}}$ ).

Now let $I$ \ be an interval for which $\mu _{1}|_{I}$ has a strictly
positive density. By assumption $A2$ there is $n$ such that $%
T_{0}^{n}(I)=[0,1]$. Let us consider the measure $\nu =\mu _{1}1_{I}$.
Suppose $s(\nu )$ is the support of $\nu $ (the set on which $\nu $ has
strictly positive density). Since $L_{T_{0}}$ is a positive operator and  $%
T_{0}$ is piecewise $C^{2}$, then $L_{T_{0}}\nu $ has also a strictly
positive density almost everywhere on $T_{0}(I).$ Since the convolution can
only increase the support of a positive measure, we get that $s(L_{0}\nu
)\supseteq \overline{s(L_{T_{0}}\nu )}$ $\supseteq T_{0}(I)$,  $%
s(L_{0}^{2}\nu )\supseteq T_{0}^{2}(I)$ and $s(L_{0}^{i}\nu )\supseteq
T_{0}^{i}(I)$ for $i\geq 1$. Then by assumption $A2$ we get inductively $%
s(L_{0}^{n}\nu )=[0,1]$. \ This contraddicts the fact that $\underset{%
i\rightarrow \infty }{\limsup}{\Vert L_{0}^{i}\mu _{1}\Vert _{1}}${$%
=\Vert \mu _{1}\Vert _{1}$}. Indeed \ {recall that any measure }$\mu _{1}${\
of zero average can be decomposed in $\mu _{1}^{+}+\mu _{1}^{-}$, the
positive and negative component of }$\mu _{1}${. }We have that $L_{0}^{n}\mu
_{1}^{-}$ is a negative measure having a bounded variation density and the
support of $L_{0}^{n}\nu $ being the whole space overlaps the support of $%
L_{0}^{n}\mu _{1}^{-}$ in this way 
\begin{eqnarray*}
{\Vert L_{0}^{n}\mu }_{1}{\Vert _{1}} &=&\Vert L_{0}^{n}(\mu _{1}^{+}+\mu
_{1}^{-})\Vert _{1} \\
&\leq &\Vert L_{0}^{n}(\mu _{1}^{+}-\nu +\nu +\mu _{1}^{-})\Vert _{1} \\
&\leq &\Vert L_{0}^{n}(\mu _{1}^{+}-\nu )||_{1}+||L_{0}^{n}\nu +L_{0}^{n}\mu
_{1}^{-}\Vert _{1} \\
&<&\Vert L_{0}^{n}(\mu _{1}^{+}-\nu )||_{1}+||\mu _{1}^{-}||_{1}+||\nu ||_{1}
\\
&=&||\mu _{1}||_{1}.
\end{eqnarray*}

Then for each $k\geq 0$ $\ {\Vert L_{0}^{n+k}\mu }_{1}{\Vert _{1}\leq \Vert
L_{0}^{n}\mu }_{1}{\Vert _{1}<}||\mu _{1}||_{1}$, contraddicting $\underset{%
i\rightarrow \infty }{\lim \sup }{\Vert L_{0}^{i}\mu _{1}\Vert _{1}}${$%
=\Vert \mu _{1}\Vert _{1}$}.
\end{proof}

\begin{remark}
The assumptions A1,...,A4 have been chosen to ensure mixing for a whole
family of nontrivial systems. Given a single system with noise, to prove its
mixing is usually much easier. It can be done with a computer aided proof
also in quite complicated examples, as we will see in Section \ref{sec:final}%
.
\end{remark}

Applying Corollaries \ref{cormap2} and \ref{cornoise}, using the computation
made in Example \ref{exnoise} for the explicit form of the derivative
operator we get

\begin{corollary}
\label{cor:tentspertubartion} Using the notations of Corollary \ref{cormap2}%
, and \ref{cornoise}, for the piecewise expanding systems with uniform noise
described above, perturbing the deterministic part of the system as in
Assumption A3 \ we get the following Linear Response formula%
\begin{equation}
\lim_{\delta \rightarrow 0}\left\Vert \frac{f_{\delta }-f_{0}}{\delta }%
-R(1,L_{0})\rho _{0}\hat{\ast}(-L_{T_{0}}(f_{0})S)^{\prime }\right\Vert
_{1}=0.
\end{equation}

While if the map $T_{0}$ is Lipschitz and we perturb the uniform noise
kernel by changing the radius we get he following Linear Response formula%
\begin{equation*}
\lim_{\delta \rightarrow 0}\left\Vert \frac{f_{\delta }-f_{0}}{\delta }%
-R(1,L_{0})\dot{\rho}\hat{\ast}L_{T}(f)\right\Vert _{W}=0.
\end{equation*}%
where%
\begin{equation}
\dot{\rho}=-a^{-2}1_{[-a/2,a/2]}+\frac{a^{-1}}{2}\delta _{-a/2}+\frac{a^{-1}%
}{2}\delta _{a/2}
\end{equation}%
as in $(\ref{rodot})$.
\end{corollary}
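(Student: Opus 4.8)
The plan is to obtain this statement purely by assembling the general results already proved, since the corollary is just the specialization of Corollary \ref{cormap2} and Corollary \ref{cornoise} to the concrete family satisfying $A1,\ldots,A4$, with the explicit derivative kernel coming from Example \ref{exnoise}. First I would record that the one genuinely substantial ingredient, the mixing hypothesis $(LR1)$ of Theorem \ref{th:linearresponse}, is already in hand: the preceding Lemma establishes exactly that, under $A1,\ldots,A4$, the unperturbed operator $L_{0,T_0}$ is mixing. With that available, both target formulas follow by checking the remaining, essentially routine, hypotheses of the two corollaries.

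For the first formula (perturbing the deterministic part) I would verify the hypotheses of Corollary \ref{cormap2}. Assumption $A3$ supplies precisely the perturbation $T_\delta = D_\delta \circ T_0$ with $D_\delta = {\mathds{1}} + \delta S$, where $S$ is $1$-Lipschitz, $S(0)=S(1)=0$, and $\chi_S$ is a finite union of intervals, which is the hypothesis of Proposition \ref{pertmap}; the uniform kernel $\rho_0 \in BV$ of $A4$ meets the noise requirement. The only other condition of that proposition, continuity of $L_{T_0}:BV[0,1]\to BV(\chi_S)$, follows from $A1$ (which gives $L_{T_0}:BV[0,1]\to BV[0,1]$ continuous) together with Remark \ref{bvcase}. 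Corollary \ref{cormap2} then applies verbatim and delivers the first Linear Response formula.

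For the second formula (perturbing the noise radius) I would verify the hypotheses of Corollary \ref{cornoise}: here $T_0$ is assumed Lipschitz; the bound $\Vert \rho_\xi - \rho_0 \Vert_1 \le C\xi$ I would check by a short direct computation with $\rho_0 = a^{-1}1_{[-a/2,a/2]}$ and $\rho_\xi = (a+\xi)^{-1}1_{[-(a+\xi)/2,(a+\xi)/2]}$ (the overlap contributes a height change of order $\xi$ over a fixed length, and the two added strips have total length $\xi$); and the Wasserstein convergence of $(\rho_\xi - \rho_0)/\xi$ to the explicit $\dot{\rho}$ of \eqref{rodot} is exactly the content of Example \ref{exnoise}. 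Corollary \ref{cornoise} then yields the second formula, with the stated form of $\dot{\rho}$.

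I do not expect a serious obstacle in any single estimate, since the work has been front-loaded into the mixing Lemma and the two packaging corollaries; the proof is an assembly argument. The one point worth flagging, rather than a difficulty, is the necessity of the weaker topology in the second formula: because $\rho_0$ and $\rho_\xi$ are discontinuous at the endpoints of their supports, the difference quotient cannot converge in $L^1$, and its limit must carry the atomic terms $\frac{a^{-1}}{2}\delta_{\pm a/2}$. This is precisely why the weak norm is taken to be $\Vert \cdot \Vert_W$ here, and why the regularity estimate \eqref{conv1} for $\hat{\ast}$ in the Wasserstein norm is the right tool: it places $\dot{\rho}\hat{\ast}L_T(\cdot)$ back in $V_W$ so that the resolvent $R(1,L_0)$ of Theorem \ref{th:linearresponse} can be applied.
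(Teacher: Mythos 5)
Your proposal is correct and follows essentially the same route as the paper, which likewise obtains this corollary by assembling the mixing lemma for $(LR1)$ with Corollaries \ref{cormap2} and \ref{cornoise} and the computation of $\dot{\rho}$ in Example \ref{exnoise}. The extra details you supply (the direct check of $\Vert \rho_\xi-\rho_0\Vert_1\le C\xi$ and the remark on why the Wasserstein norm is forced by the atomic part of $\dot{\rho}$) are consistent with, and slightly more explicit than, the paper's one-line derivation.
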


\begin{remark}
We remark that if $L_{T}:BV[0,1]\rightarrow BV[0,1]$ as required in
Assumption $A1$ is continuous thus $L_{T}:BV([0,1])\rightarrow BV(\chi _{S})$
is automatically continuous (see Remark \ref{bvcase}). This allows to
perturb the map in a way that moves discontinuities and still get Linear
Response. This is of course due to the effect of the additive noise. Such
perturbations may break linear response or statistical stability in the
deterministic case (see e.g. \cite{Ba1} and \cite{Maz}).
\end{remark}



\subsection{Random rotations}

\label{slat} Rotations by a well approximable angle are known (see \cite%
{Gpre} e.g.) to be systems having not linear response even to constant
deterministic perturbations. In this section, we show that, not
surprisingly, if we add some noise they have.

Let us hence consider $([0,1],T)$ being such a translation, with $%
T(x)=x+\theta $ ${mod}(1)$. We will suppose that $\theta $ is of large
Diophantine type. Let us recall the definition of Diophantine type. The
definition tests the possibility of approximating $0$ by an integer
multiples of the angle. The notation $\left\vert \left\vert \cdot
\right\vert \right\vert $ below, will indicate the distance to the nearest
integer number in $\mathbb{R}$.

\begin{definition}
\label{linapp} The Diophantine type of $\theta $ is 
\begin{equation*}
\gamma (\theta )=\inf \{\gamma \, | \, \exists c_{0}>0~s.t.\,\Vert k\theta
\Vert \geq c_{0}|k|^{-\gamma }~\forall k\in \mathbb{Z}\setminus\{0\} \}.
\end{equation*}
\end{definition}

Rotations by a well approximable angle do not have linear response to
certain small Lipschitz perturbations as it is shown in the following
propositions (see \cite{Gpre} section 6.3 for the proofs).

\begin{proposition}
\label{bahh}Consider a well approximable irrational $\theta $ with $\gamma
(\theta )>2$, consider $\gamma ^{\prime }>\gamma (\theta )$. There is a
sequence of reals $\delta _{j}\geq 0$, $\delta _{j}\rightarrow 0$ and
Lipschitz small perturbations\footnote{%
Informally speaking these perturbations chage the rotation angle to a nearby
rational, making an invariant measure supported on periodic orbit to appear,
then this periodic orbit is made attracting by a further small perturbation
of the map.} $T_{\delta _{j}}$such that $\ \Vert T-T_{\delta _{j}}\Vert
_{Lip}\leq 2\delta _{j}$ and the map $T_{\delta _{j}}$ has an unique
invariant measure $\mu _{j}$ with%
\begin{equation*}
\Vert \mu _{0}-\mu _{j}\Vert _{W}\geq \frac{1}{9}(\delta _{j})^{\frac{1}{%
\gamma ^{\prime }-1}}.
\end{equation*}
\end{proposition}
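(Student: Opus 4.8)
The plan is to exploit the good rational approximations of $\theta$ guaranteed by $\gamma(\theta)>2$ (Definition \ref{linapp}) in order to build, along infinitely many scales, a small perturbation of the rigid rotation whose rotation number is \emph{locked} at a nearby rational $p_j/q_j$, so that the perturbed map has a single periodic orbit and hence a unique invariant measure $\mu_j$ concentrated on $q_j$ points. Comparing $\mu_j$ with $\mu_0$ (the Lebesgue measure, invariant for the rotation) in the Wasserstein norm \eqref{Wass} should produce a response of order $1/q_j$, while the perturbation will have size $\delta_j\approx|\theta-p_j/q_j|\ll 1/q_j$; the mismatch between these two rates is precisely the failure of linear response.

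Concretely, first I would fix a nonnegative Lipschitz bump $b$ with $b(0)=b(1)=0$ and consider the two--parameter family $T_{\alpha,\epsilon}(x)=x+\alpha+\epsilon\,b(x)\bmod 1$, which for $\epsilon=0$, $\alpha=\theta$ is the original rotation $T$. For a convergent $p_j/q_j$ of $\theta$ I would tune $(\alpha,\epsilon)$ so that $T_{\alpha,\epsilon}^{q_j}$ becomes tangent to the identity, i.e. the parameter sits on the boundary of the phase--locking (Arnold) tongue of rotation number $p_j/q_j$: there the two periodic orbits present inside the tongue merge into a single semi--stable (parabolic) orbit, every other orbit is forward and backward asymptotic to it, and therefore the only invariant probability measure $\mu_j$ is the uniform measure on that $q_j$--point orbit. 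Since the tongue is centered near $\alpha\approx p_j/q_j$, one has $|\alpha-\theta|\approx|p_j/q_j-\theta|$, so that $\Vert T-T_{\alpha,\epsilon}\Vert_{Lip}\le|\alpha-\theta|+\epsilon\,\Vert b\Vert_{Lip}\le 2\delta_j$ once $\epsilon$ is taken of the same order as the angle change.

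The two quantitative estimates are then the following. A lower bound $\Vert\mu_j-\mu_0\Vert_W\ge c/q_j$, which holds because $\mu_j$ charges only $q_j$ points of the interval whereas $\mu_0=m$ is spread uniformly (transporting $m$ onto any measure supported on $q_j$ points costs at least a constant times $1/q_j$); and the approximation bound $\delta_j\approx|\theta-p_j/q_j|=\Vert q_j\theta\Vert/q_j$. I would then select the convergents along the subsequence on which the Diophantine type is attained, so that $q_{j+1}$, and hence $1/\Vert q_j\theta\Vert$, is as large as $\gamma(\theta)$ permits; expressing $\delta_j$ as a power of $q_j$, eliminating $q_j$ between the two estimates, and using $\gamma'>\gamma(\theta)$ to bound the approximation rate yields the claimed $\tfrac19\,\delta_j^{1/(\gamma'-1)}$, the constant absorbing the implied constants.

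The hard part will be the \emph{uniqueness} of the invariant measure. A generic small perturbation at a phase--locked parameter produces an attracting \emph{and} a repelling periodic orbit, hence at least two invariant measures, so one must place the parameter exactly at the saddle--node (tangent) bifurcation to collapse them into a single semi--stable orbit; the delicate quantitative point is that the tongue for the high period $q_j$ is extremely thin (its width in $\alpha$ being exponentially small in $q_j$), so the tuning of $\alpha$ must be carried out carefully and one must verify that the required bump amplitude $\epsilon_j$ can still be kept below $\delta_j$. The remaining steps, namely the lower Wasserstein bound and the continued--fraction bookkeeping, are routine once this construction is in place; for the full details I would follow \cite[Section 6.3]{Gpre}.
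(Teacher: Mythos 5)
First, a point of reference: the paper does not prove this proposition itself --- it is imported from \cite{Gpre} with an explicit pointer to Section 6.3 there --- so the only internal guidance is the footnote, and your outline does follow the construction sketched there (lock the rotation number at a nearby rational $p_j/q_j$, arrange a single periodic orbit, compare the resulting atomic measure with Lebesgue in the norm (\ref{Wass})). The skeleton is right, but the quantitative heart of the statement is the exponent $\tfrac{1}{\gamma'-1}$, and that is precisely the step you leave to ``routine bookkeeping''; moreover the one sentence you devote to it invokes the Diophantine hypothesis in the wrong direction. By Definition \ref{linapp}, $\gamma'>\gamma(\theta)$ yields a constant $c_0$ with $\Vert k\theta\Vert\geq c_0|k|^{-\gamma'}$ for \emph{all} $k$: that is an upper bound on the quality of approximation, hence an \emph{upper} bound on $1/q_j$ in terms of $\delta_j$, which cannot produce the claimed lower bound on the response. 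What you actually need is the failure of the Diophantine inequality \emph{below} the type: for every $\gamma<\gamma(\theta)$ there are infinitely many $q_j$ with $\Vert q_j\theta\Vert\leq q_j^{-\gamma}$, whence $\delta_j\approx|p_j/q_j-\theta|\leq q_j^{-(\gamma+1)}$ and $1/q_j\geq\delta_j^{1/(\gamma+1)}$ along that subsequence. Combining with the elementary estimate $\Vert\mu_0-\mu_j\Vert_W\geq c/q_j$ (test against a $1$-Lipschitz sawtooth of period $1/q_j$ vanishing on the orbit; this is where a constant like $\tfrac19$ comes from) gives $\Vert\mu_0-\mu_j\Vert_W\geq c\,\delta_j^{1/(\gamma+1)}$, and you must then check that $c\,\delta_j^{1/(\gamma+1)}\geq\tfrac19\delta_j^{1/(\gamma'-1)}$ for small $\delta_j$, which requires $\tfrac{1}{\gamma+1}<\tfrac{1}{\gamma'-1}$, i.e.\ $\gamma'<\gamma+2$ with $\gamma<\gamma(\theta)$. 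So the arithmetic does close, but only after this comparison is made explicit; ``eliminating $q_j$ between the two estimates'' is not automatic, and the inequality between the exponents is exactly what has to be verified.

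The second gap is the uniqueness mechanism, which you correctly identify as the hard part but then route through the boundary of the Arnold tongue. That forces you to (a) locate a tangency parameter for a period as large as $q_j$ and (b) rule out that the zero set of the lift of $T_{\alpha,\epsilon}^{q_j}-\mathrm{id}-p_j$ at the tongue boundary consists of more than one orbit; (b) is simply false for some bump shapes, and neither point is addressed. Both difficulties disappear if you follow the footnote literally: set the angle equal to $p_j/q_j$ exactly and add $\epsilon_j b$ where $b\geq0$ is Lipschitz and vanishes precisely on one orbit $O$ of the rational rotation. Then the lift $G$ of the $q_j$-th iterate satisfies $G(x)\geq x+p_j$ with equality exactly on $O$, so $O$ is the unique periodic orbit, every orbit converges to it under forward iteration, and by the Poincar\'e classification the uniform measure on the $q_j$ equally spaced points of $O$ is the unique invariant probability measure --- which is exactly what the sawtooth test function above needs. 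With $\epsilon_j$ of order $|p_j/q_j-\theta|$ the bound $\Vert T-T_{\delta_j}\Vert_{Lip}\leq2\delta_j$ is immediate, and your worry about keeping the bump amplitude below the tongue width evaporates. In short: right strategy, but the derivation of the exponent (the actual content of the proposition) is missing and as gestured at uses the hypothesis backwards, and the uniqueness argument should be replaced by the direct construction.
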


The result also hold for the average of a given regular observable. In \cite%
{Gpre} it is shown an explicit example of such an observable for a rotation
of a well-chosen angle $\theta$.

\begin{proposition}
Consider a map $T$ as above with the rotation angle $\theta
=\sum_{1}^{\infty }2^{-2^{2i}}$. There is a sequence of reals $\delta
_{j}\geq 0$, $\delta _{j}\rightarrow 0$ and Lipschitz small perturbations $\
\Vert T-T_{\delta _{j}}\Vert _{Lip}\leq 2\delta _{j}$ with invariant
measures $\mu _{j}$, an observable $\psi :[0,1]\rightarrow \mathbb{R}$ \
with derivative in $L^{2}$ (see \cite{Gpre} section 6.3 for its definition)
and $C\geq 0$ such that 
\begin{equation*}
\left\vert \int \psi dm-\int \psi d\mu _{j}\right\vert \geq C\sqrt{\delta
_{j}}.
\end{equation*}
\end{proposition}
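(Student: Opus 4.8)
The plan is to strengthen the companion Proposition \ref{bahh}, replacing the Wasserstein distance --- which is a supremum over \emph{all} $1$-Lipschitz test functions, whose optimiser may vary with $j$ --- by a \emph{single} fixed observable $\psi$ with $\psi'\in L^{2}$, at the price of the weaker rate $\sqrt{\delta_{j}}$ and of specialising to the explicit super-Liouville angle $\theta=\sum_{i\ge 1}2^{-2^{2i}}$. I would reuse verbatim the family $T_{\delta_{j}}$ and the invariant measures $\mu_{j}$ produced in Proposition \ref{bahh} (equivalently, in \cite[\S 6.3]{Gpre}), and supply only the test function. First I would record the arithmetic of this $\theta$: its convergents $p_{j}/q_{j}$ satisfy $q_{j}\sim 2^{2^{2j}}$ and $\Vert q_{j}\theta\Vert\sim q_{j}^{-3}$, so that $\gamma(\theta)=3$ in the sense of Definition \ref{linapp} and, crucially, the denominators grow \emph{doubly exponentially} and are therefore strongly lacunary.

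Next I would recall the geometry of $\mu_{j}$. The perturbation $T_{\delta_{j}}$ phase-locks the rotation number to $p_{j}/q_{j}$ and creates a hyperbolic attracting periodic orbit of period $q_{j}$, whose $q_{j}$ points sit in the cyclic order dictated by the rigid rotation but are \emph{clustered} rather than equidistributed. Consequently $\mu_{j}-m$ is a genuine macroscopic defect whose dominant detectable content lives at a frequency $\kappa_{j}$ tied to $q_{j}$, and whose transport size is the $\ge\frac{1}{9}\delta_{j}^{1/(\gamma'-1)}$ of Proposition \ref{bahh}. Since $\gamma'>\gamma(\theta)=3$ gives exponent $1/(\gamma'-1)<\frac{1}{2}$, this Wasserstein bound is in fact stronger than $\sqrt{\delta_{j}}$, which is consistent with testing against a single lower-regularity function yielding a weaker conclusion.

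Then I would build $\psi$ as a lacunary series $\psi=\sum_{l}a_{l}\psi_{l}$, where each $\psi_{l}$ is a fixed profile resonant with the $l$-th defect (e.g. a trigonometric mode of frequency $\kappa_{l}$), normalised so that $\int\psi_{l}\,d(\mu_{l}-m)$ is of order $\sqrt{\delta_{l}}/a_{l}$. The coefficients $a_{l}$ are pinned by two competing demands: $\sum_{l}a_{l}^{2}\Vert\psi_{l}'\Vert_{2}^{2}<\infty$, which secures $\psi'\in L^{2}$; and $a_{l}$ decaying no faster than needed, so the resonant term survives. It is exactly this balance that caps the attainable rate at the borderline exponent $\frac{1}{2}$ and forces $\psi$ to be merely $H^{1}$ and not Lipschitz (one has $\sum_{l}a_{l}\Vert\psi_{l}'\Vert_{\infty}=\infty$). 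I would then expand
\begin{equation*}
\int\psi\,dm-\int\psi\,d\mu_{j}=-\sum_{l}a_{l}\int\psi_{l}\,d(\mu_{j}-m),
\end{equation*}
isolate the diagonal term $l=j$, which is $\gtrsim C\sqrt{\delta_{j}}$, and absorb the off-diagonal terms $l\ne j$ into a summably small remainder. Because the resonant frequencies $\kappa_{l}$ are doubly-exponentially separated, the profiles $\psi_{l}$ are almost orthogonal when paired against the defects --- a low-frequency $\psi_{l}$ ($l<j$) is nearly affine across the scale of $\mu_{j}-m$, while a high-frequency $\psi_{l}$ ($l>j$) oscillates too fast to be registered by the $j$-th defect --- so $\sum_{l\ne j}a_{l}\int\psi_{l}\,d(\mu_{j}-m)=o(\sqrt{\delta_{j}})$.

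The main obstacle is precisely this decoupling estimate: quantifying that every non-resonant $\psi_{l}$ contributes $o(\sqrt{\delta_{j}})$ and that these errors sum to something dominated by the diagonal term. This is where the doubly-exponential growth of the $q_{j}$ --- i.e. the specific choice $\theta=\sum_{i\ge 1}2^{-2^{2i}}$ --- is indispensable, together with the clustered geometry of the attracting orbit that fixes the scale of the defect. The explicit profiles $\psi_{l}$, the coefficients $a_{l}$, and the constants are those constructed in \cite[\S 6.3]{Gpre}; the argument above is the mechanism by which they deliver the stated $\sqrt{\delta_{j}}$ lower bound along the subsequence, showing that linear response fails even for this single $H^{1}$ observable.
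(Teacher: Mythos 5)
First, a point of comparison: the paper does not prove this proposition. Both this statement and Proposition \ref{bahh} are recalled from \cite{Gpre}, and the text explicitly defers their proofs to Section 6.3 of that reference; no argument is given here. So there is no in-paper proof to measure your proposal against, and it must stand on its own as a reconstruction of the external argument. As such it has a genuine gap: the two load-bearing steps are named but never carried out. You must (i) exhibit the profiles $\psi_{l}$ and coefficients $a_{l}$ and prove the diagonal lower bound $\bigl|\int\psi_{j}\,d(\mu_{j}-m)\bigr|\gtrsim \sqrt{\delta_{j}}/a_{j}$, and (ii) prove the off-diagonal bound $\sum_{l\neq j}a_{l}\bigl|\int\psi_{l}\,d(\mu_{j}-m)\bigr|=o(\sqrt{\delta_{j}})$. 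For both you write that the explicit profiles, coefficients and constants ``are those constructed in \cite{Gpre}, \S 6.3'' --- that is, you delegate exactly the quantitative content of the proposition back to the reference you are supposed to be reproducing. A mechanism plus a citation to the target is not a proof.

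Two parts of the heuristic itself are also shaky and would need repair before the estimates could be run. The attracting orbit of $T_{\delta_{j}}$ is a small perturbation of an orbit of the rigid rotation by $p_{j}/q_{j}$, hence of $q_{j}$ \emph{equispaced} points; it is not ``clustered''. The signal a resonant mode detects is not a macroscopic clustering defect but the exact arithmetic resonance: if $x_{k}=x_{0}+kp_{j}/q_{j}$ then $q_{j}x_{k}\equiv q_{j}x_{0}\pmod 1$, so a frequency-$q_{j}$ mode is (nearly) \emph{constant} on the orbit and its integral against $\mu_{j}$ is $O(1)$ while its integral against $m$ vanishes --- and one must additionally control the phase $q_{j}x_{0}\bmod 1$ to keep that $O(1)$ away from zero. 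Second, your disposal of the terms $l>j$ (``oscillates too fast to be registered by the $j$-th defect'') is not a valid argument against an atomic measure: high-frequency modes are not small when paired with a sum of Dirac masses, and $\int\psi_{l}\,d\mu_{j}$ for $l>j$ is generically $O(1)$. What saves the tail is only the decay of the coefficients $a_{l}$ (forced by $\sum_{l}a_{l}^{2}q_{l}^{2}<\infty$) combined with the lacunarity of the $q_{l}$; the $l<j$ terms are the ones controlled by near-affineness, i.e.\ a discrepancy bound $O(q_{l}/q_{j})$. Until the construction of $\psi$ and these two estimates are written out, the proposal is an outline of the right kind of argument rather than a proof.
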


Now consider a random system, where to a rotation as above we add noise with
BV density of probability $\rho _{\xi }$. The related annealed transfer
operator can be defined as:%
\begin{equation}
L_{\xi ,T}(f):=\rho _{\xi }\ast L_{T}(f)
\end{equation}%
here the convolution is taken onto the circle, or equivalently on the
interval with periodic boundary conditions\footnote{%
Formally, it can be defined as in (\ref{hat}) considering as $\pi $ the
universal cover map $\mathbb{R\rightarrow }S^{1}$.}, as it is natural for
rotations. Let us consider again some small Lipschitz perturbations of $T$
as done before where we suppose $T_{\delta }=$ $D_{\delta }\circ T_{0}$ with 
$D_{\delta }={\mathds{1}}+\delta S$, a bilipschitz homeomorphism near the
identity and the related transfer operators $L_{\xi ,T_{\delta }}$. We
remark that for the system $L_{\xi ,T_{0}}$, the Lebesgue measure $m$ is the
stationary measure.\ We can apply Corollary \ref{cormap2} provided the
unperturbed operator is mixing. About this, let us notice that in the case
of rotations the convolution and $L_{T}$ commute, thus%
\begin{equation*}
(L_{\xi ,T}(f))^{n}=\underset{\llcorner ~n~times~\lrcorner }{\rho _{\xi
}\ast ...\ast \rho _{\xi }}\ast L_{T}^{n}(f)
\end{equation*}%
there is hence then some $n$ \ for which $(L_{\xi ,T})^{n}$ has a strictly
positive kernel and it is mixing (see Remark \ref{rmkLM}) and we get the
linear response formula converging in $L^{1}$. 
\begin{equation*}
\lim_{\delta \rightarrow 0}\frac{f_{\delta }-m}{\delta }=(1-L_{\xi
,T_{0}})^{-1}\rho _{\xi }\ast (-S)^{\prime }.
\end{equation*}

\subsection{A model of the Belosuv-Zhabotinsky reaction}

\label{sec:final}

We show an example of a random system of interest in applications composed
by a nontrivial map perturbed by noise. The deterministic part $T$ of the
system has \ coexisting strong expansion and contraction regions, making the
mathematical understanding of the statistical properties of the system,
quite difficult. However, it is possible, with a computer aided proof to
show that this system with additive noise satisfies Assumption $(LR1)$ of
Theorem \ref{th:linearresponse} (mixing). The other needed assumptions can
be verified directly and then this system has a linear response under
suitable small perturbations of $T$.

The system we consider is a model of the behavior of the famous
Belosuv-Zabotinsky chaotic chemical reaction (see \cite{MT}). This is a
dynamical system with additive noise in which we show response for certain
perturbations of the map. The deterministic part of the system is given by
the map 
\begin{equation}
T(x)=\left\{ 
\begin{array}{c}
(a+(x-\frac{1}{8})^{\frac{1}{3}})e^{-x}+b,~~~0\leq x\leq 0.3 \\ 
c(10xe^{\frac{-10x}{3}})^{19}+b~~~0.3\leq x\leq 1%
\end{array}%
\right.  \label{BZ}
\end{equation}

Where the parameter $c$ is defined so that $T(0.3^{-})=T(0.3^{+})$, making $%
T $ continuous at $0.3$. The value of $c$ can be computed in a closed form
as: 
\begin{equation*}
c=\frac{20}{3^{20}\cdot 7}\cdot \bigg(\frac{7}{5}\bigg)^{1/3}\cdot
e^{187/10}.
\end{equation*}

The parameter $a$ is defined so that so that $T^{\prime }(0.3^{-})=0$,
making $T^{\prime }$ continuous at $0.3$. The value of $a$ can be computed
in a closed form as: 
\begin{equation*}
a=\frac{19}{42}\cdot \bigg(\frac{7}{5}\bigg)^{1/3}
\end{equation*}%
while 
\begin{equation*}
b=0.02328852830307032054478158044023918735669943648088852646123182739831022528
\end{equation*}%
is near to a value giving a period $3$ orbit for the critical value $T(0.3)$
(see \cite{GMN} for more details). 
\begin{figure}[tbph]
\centering
\includegraphics[height=4cm, width=5cm]{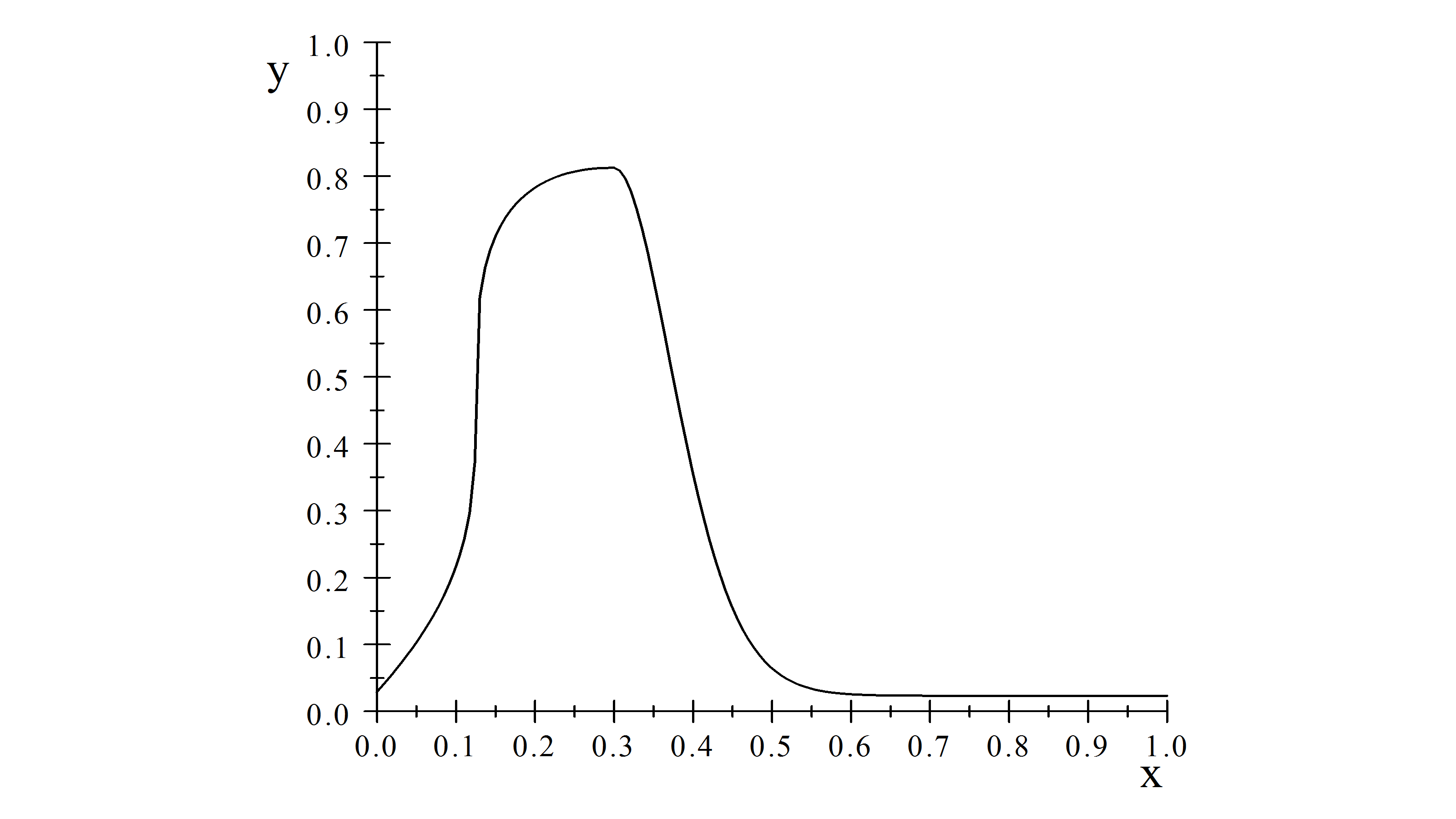}
\caption{The map $T$.}
\label{f0}
\end{figure}
The graph ot the map $T$ is represented in Figure \ref{f0}. At each
iteration of the map a uniformly distributed noise perturbation with span of
size $\xi $ is applied, by a kernel $\rho _{0}=\xi ^{-1}1_{[-\xi /2,\xi /2]}$%
, We consider reflecting boundaries when the noise is big enough to send
points out of the space $[0,1]$. We remark that this is never the case when
the noise amplitude is smaller than the parameter $b$.

In \cite{MT} the authors consider several different values for the noise
range, showing numerically that there is a transition from positive to
negative Lyapunov exponents as the noise range increases. This statement was
proved with a computer aided proof in \cite{GMN}. We now select two
different noises amplitudes (one with positive and one with negative
Lyapunov exponent) for which the mixing assumption can be proved.

\begin{lemma}
\label{mixa}Let us consider $\xi _{1}=$ $0.860\times 10^{-2}$ and $\xi
_{2}=0.129\times 10^{-3}$ and the kernels $\rho _{i}=\xi _{i}^{-1}1_{[-\xi
_{i}/2,\xi _{i}/2]}$ \ Let us consider the transfer operator $L_{\xi
_{i}}:=\rho _{i}\hat{\ast}L_{T}$\ associated to these systems satisfy
assumption $(LR1)$ of Theorem \ref{th:linearresponse} (mixing).
\end{lemma}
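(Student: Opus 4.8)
The plan is to establish mixing $(LR1)$ for the two specified systems by reducing it to a quantitative contraction estimate for the transfer operator on the zero-average space, and to certify that estimate through the computer-aided method of \cite{GMN}. The topological strategy used for the ``eventually onto'' family in the previous subsection is \emph{not} available here: the parameter $b$ in \eqref{BZ} is tuned so that the critical value has a (nearly) attracting period-$3$ orbit, hence $T$ is very far from satisfying Assumption $A2$, and the support-spreading argument fails. It is precisely the regularizing action of the noise that must be quantified.

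First I would set up the spectral picture. By Corollary \ref{regcor} each $L_{\xi_i}=\rho_i\hat{\ast}L_T$ is continuous from $L^1$ to $BV$, and exactly as in Lemma \ref{lem:fixedpoint} it satisfies
\begin{equation*}
\|L_{\xi_i}^n f\|_{BV}\le 9\|\rho_i\|_{BV}\,\|f\|_1\qquad(n\ge 1),
\end{equation*}
a Lasota--Yorke inequality on the pair $(BV,L^1)$. Since $BV$ embeds compactly in $L^1$, the Hennion--Nussbaum argument already invoked in the previous subsection (cf. \cite{Li2}) gives that $L_{\xi_i}$ is quasi-compact on $BV$ with spectral radius $1$ and essential spectral radius $0$; its peripheral spectrum is therefore finite and made of eigenvalues of finite multiplicity. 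Consequently $(LR1)$ follows once one produces a single power $N$ and a constant $c<1$ with $\|L_{\xi_i}^N g\|_1\le c\,\|g\|_1$ for every $g\in V$: iterating this bound gives $\|L_{\xi_i}^{kN}g\|_1\to 0$, which is the qualitative mixing required by $(LR1)$. Using the regularization after one step, it is in fact enough to bound $\|L_{\xi_i}^{N-1}h\|_1$ over the $BV$-bounded zero-average ball $\{h\in V:\|h\|_{BV}\le 9\|\rho_i\|_{BV}\}$, a set which is compact in $L^1$.

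To certify this contraction I would approximate $L_{\xi_i}$ by a finite-rank operator $\widetilde L$ on a fine partition of $[0,1]$ (an Ulam-type discretization), controlling the discretization error in the $BV\to L^1$ operator norm through the convolution estimates of Lemma \ref{convoocopy1}; interval arithmetic then yields a rigorous upper bound for $\|\widetilde L^{\,N}h\|_1$ over the above ball, which, combined with the certified approximation error, bounds $\|L_{\xi_i}^N g\|_1$ strictly below $\|g\|_1$. This is exactly the quantitative convergence-to-equilibrium estimate produced by the computer-aided scheme of \cite{GMN}, which I would invoke for $\xi_1$ and $\xi_2$. The main obstacle is this computational step. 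For $\xi_1\approx 10^{-2}$ the noise spreads mass over an interval of appreciable length at each iteration, so a moderate $N$ already produces a strictly positive kernel and the contraction is comparatively easy to certify (cf. Remark \ref{rmkLM}). For $\xi_2\approx 10^{-4}$, by contrast, the convolution regularizes very slowly, many iterates are needed before the density is spread across the whole interval, the contraction coefficient is close to $1$, and one must control the accumulation of rounding errors over these many iterates on a partition fine enough to resolve both the strongly expanding and the contracting regions of $T$. Handling this delicate balance is the heart of the matter and is precisely what \cite{GMN} supplies; with the resulting contraction in hand, $(LR1)$ follows.
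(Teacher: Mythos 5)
Your proposal is correct and follows essentially the same route as the paper: the paper's proof simply cites the rigorous computer-aided estimates of \cite{GMN} (Table 1), namely $\Vert L_{\xi_1}^{55}|_{V}\Vert_{L^{1}}\leq 0.059$ and $\Vert L_{\xi_2}^{70}|_{V}\Vert_{L^{1}}\leq 0.41$, which give exactly the single-power contraction on $V$ that you reduce $(LR1)$ to. Your additional quasi-compactness discussion via Hennion--Nussbaum is not needed (the direct iteration of the certified bound, using that $L$ is a weak $L^1$-contraction, already yields $\|L^n g\|_1\to 0$), but it does no harm.
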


\begin{proof}
In \cite{GMN}, Table 1 (first and last line) the reader can find a rigorous
estimate for the mixing rate of the transfer operator\ $L_{\xi _{i}}$
related to the size noise $\xi _{i}.$\ In particular it is shown that for
the noise size $\xi _{1}$ \ \ $\Vert L_{\xi _{1}}^{55}|_{V}\Vert
_{L^{1}}\leq 0.059$ \ and $\xi _{2}$ \ \ $\Vert L_{\xi _{2}}^{70}|_{V}\Vert
_{L^{1}}\leq 0.41$ \ proving the exponential contraction of the zero average
space in $L^{1}$ and in particular the mixing assumption.
\end{proof}

{Note that with those values of the noise, the transfer operator is
regularizing from $L^{1}$ to $BV[0,1]$ in the sense of Corollary \ref{regcor}%
. }The above statement also holds for the other noise amplitudes considered
in \cite{GMN}, but we will not enter in the details in this paper. Now we
focus on a class of perturbations of $T$ such that the derivative operator
exist. Let us consider family of perturbations $T_{\delta }=$ $D_{\delta
}\circ T$ as in the previous section, such that $D_{\delta }={\mathds{1}}%
+\delta S$ with $S$ 1-Lipschitz, suppose that the support $\chi _{S}$ of $S$
does not contain a neighborhood of the global maximum $m_{T}$ of $T.$ We see
that such a class of perturbations satisfies Proposition \ref{pertmap}.

\begin{lemma}
\label{buco}Let $T$ be the map defined at $($\ref{BZ}$)$. Let us consider $%
T_{\delta }=$ $D_{\delta }\circ T$, such that $D_{\delta }={\mathds{1}}%
+\delta S$ and $\chi _{S}$ does not contain a neighborhood of $m_{T}$. Then $%
L_{T}:BV([0,1])\rightarrow BV(\chi _{S})$ is continuous.
\end{lemma}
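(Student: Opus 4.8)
The plan is to establish the stated continuity, which by the accompanying footnote amounts to the inequality $\|1_{\chi_S}\cdot L_T f\|_{BV}\le K\|f\|_{BV}$, through a branchwise Lasota--Yorke computation, using the hypothesis on $\chi_S$ to keep all contributing inverse branches away from the critical point of $T$. First I would read off the monotonicity and critical structure of $(\ref{BZ})$: a direct inspection shows $T$ is strictly increasing on $[0,0.3]$ and strictly decreasing on $[0.3,1]$, so it has exactly two monotone branches meeting at $x=0.3$, where $T'(0.3^-)=T'(0.3^+)=0$ and, since $T''(0.3)<0$, the global maximum value $m_T=T(0.3)$ is attained at a \emph{nondegenerate} critical point. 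Hence $L_T f$ carries a square-root singularity as $y\uparrow m_T$, and this is precisely what the hypothesis removes: because $\chi_S$ avoids a neighbourhood of $m_T$, continuity and strict monotonicity of $T$ near $0.3$ furnish an $\eta>0$ such that every preimage of every $y\in\chi_S$ lies in $[0,0.3-\eta]\cup[0.3+\eta,1]$, where $|T'|\ge c_0>0$. Thus the transfer weights $1/|T'|$ are uniformly bounded on the preimages that feed $1_{\chi_S}L_T f$.

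Writing $L_T f(y)=\sum_i g_i(y)\,(f\circ\psi_i)(y)$ with $\psi_i$ the inverse branches and $g_i=1/|T'\circ\psi_i|$, I would bound each summand on the part of $\chi_S$ where it is defined. Using the product rule for bounded variation, invariance of the variation under the monotone reparametrization $\psi_i$, and the bound $\|h\|_\infty\le\|h\|_{BV}$ already used in the paper, one obtains
\begin{equation*}
Var_{\chi_S}\big(g_i\cdot(f\circ\psi_i)\big)\le \|g_i\|_\infty\,Var(f)+\|f\|_{BV}\,Var(g_i)+(\text{boundary terms}),
\end{equation*}
with $\|g_i\|_\infty\le 1/c_0$. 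The boundary terms are the single jumps at the images of the branch endpoints $T(0),T(1)$ and at the cut-off $\partial\chi_S$, each bounded by $\|g_i\|_\infty\|f\|_\infty\le\tfrac1{c_0}\|f\|_{BV}$ since $|T'|$ is a fixed nonzero constant at $x=0,1$; the intermediate value $T(\tfrac18)$ produces no jump, as will be seen below. The $L^1$ part is immediate from $\|1_{\chi_S}L_T f\|_1\le\|L_T f\|_1\le\|f\|_1\le\|f\|_{BV}$. Summing the two branches then gives $\|1_{\chi_S}L_T f\|_{BV}\le K\|f\|_{BV}$, \emph{provided} each $Var(g_i)$ is finite.

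The main obstacle is exactly the finiteness of $Var(g_i)$, because $T$ is not piecewise $C^2$: at $x=\tfrac18$ the factor $(x-\tfrac18)^{1/3}$ gives a vertical tangent, $T'\to+\infty$, so the usual distortion bound $\sup|T''/T'^2|$ is infinite there and the classical estimate does not apply verbatim. I would circumvent this by passing to the change of variables $y=T(x)$, which turns the weight's variation into the distortion integral
\begin{equation*}
Var_{\chi_S}(g_i)=\int \frac{|T''(x)|}{|T'(x)|^{2}}\,dx,
\end{equation*}
and then check integrability near $x=\tfrac18$ from the asymptotics. With $u=x-\tfrac18$ one has $T'\sim\tfrac13 e^{-1/8}u^{-2/3}$ and $T''\sim-\tfrac29 e^{-1/8}u^{-5/3}$, so $|T''|/|T'|^2\sim C\,u^{-1/3}$, which is integrable since $-1/3>-1$; equivalently $g_i\sim C'(y-T(\tfrac18))^{2}$ near $T(\tfrac18)$, so $g_i$ is $C^1$ there and contributes neither a blow-up nor a jump. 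Away from $x=\tfrac18$ and from the excised neighbourhood of $0.3$, $T$ is genuinely $C^2$ with $|T'|\ge c_0$, so the integrand is bounded and the integral is plainly finite. This yields $Var(g_i)<\infty$ on each branch and closes the estimate; nonsingularity of $T$, needed to define $L_T$, holds since $T$ is piecewise strictly monotone.
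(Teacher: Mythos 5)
Your proof is correct and follows the same route as the paper's, which compresses the whole argument into the single observation that outside a neighbourhood of $m_T$ the map consists of finitely many convex monotone branches with derivative bounded away from zero. The only point of divergence is how the non-$C^2$ point $x=\tfrac18$ is handled: the paper disposes of it implicitly via convexity of the branch (so $1/|T'|$ is monotone, hence of bounded variation there), whereas you verify integrability of $|T''|/|T'|^{2}$ by explicit asymptotics --- both work, and your version has the merit of making explicit the one genuine subtlety that the paper's one-line proof glosses over.
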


\begin{proof}
This follows from the remark that outside a neighborhood of $m_{T}$ the map
is made of a finite set of convex monotone $C^{2}$ branches with derivative
bounded away from zero.
\end{proof}

We now have everything we need to apply Theorem \ref{th:linearresponse}
(Corollary \ref{cormap2}) and get

\begin{proposition}
\label{BZappl}Let us consider the random dynamical system \ formed by the
map $T$ defined at (\ref{BZ}) and uniformly distributed additive noise of
radius $\xi _{i}$ as in the statement of Lemma \ref{mixa}. \ If we consider
a family of systems obtained by the deterministic perturbation of $T$ as
described before: $T_{\delta }=$ $D_{\delta }\circ T$, such that $D_{\delta
}={\mathds{1}}+\delta S$, $S$ being $1$-Lipschitz and $\chi _{S}$ does not
contain a neighborhood of $m_{T}$. Let us consider the associated transfer
operators $L_{\xi ,T_{\delta }}:=\rho _{\xi }\hat{\ast}L_{D_{\delta }\circ
T}.$ Then the system has linear response: let $f_{0}$ be the stationary
measure of $L_{\xi ,T_{0}}$ and $f_{\delta }$ some stationary measure of $%
L_{\xi ,T_{\delta }},$ then 
\begin{equation*}
\lim_{\delta \rightarrow 0}\left\Vert \frac{f_{\delta }-f_{0}}{\delta }%
-R(1,L_{\xi ,T_{0}})[\rho _{\xi }\hat{\ast}(-L_{T}(f_{0})S)^{\prime
}]\right\Vert _{1}=0.
\end{equation*}
\end{proposition}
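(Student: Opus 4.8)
The plan is to verify that the specific Belosuv--Zhabotinsky system with additive noise satisfies every hypothesis of Corollary \ref{cormap2}, after which the stated formula is immediate. Recall that Corollary \ref{cormap2} requires only two things of the system: that the unperturbed annealed operator $L_{\xi,T}$ satisfy the mixing assumption $(LR1)$, and that the perturbation $T_\delta = D_\delta \circ T$ be of the admissible form of Proposition \ref{pertmap}, whose sole nontrivial requirement is the continuity of $L_T : BV([0,1]) \to BV(\chi_S)$. All the remaining hypotheses of Theorem \ref{th:linearresponse}---the existence and uniform $BV$-bound of the fixed points $(LR0)$, the regularization $(LR2)$, and the smallness and differentiability of the perturbation $(LR3)$---are supplied automatically by Lemma \ref{lem:fixedpoint}, Corollary \ref{regcor}, and Proposition \ref{pertmap} respectively, as soon as these two conditions are met.

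First I would record that $T$, as defined at (\ref{BZ}), is a nonsingular piecewise $C^2$ map: it is piecewise monotone with a single interior critical point (the global maximum $m_T$), so preimages of null sets are null. Consequently Corollary \ref{regcor}(1) gives that $L_{\xi,T} : L^1 \to BV$ is continuous, which is $(LR2)$ in the case $\Vert \cdot \Vert_w = \Vert \cdot \Vert_1$ (the $L^1 \to L^1$ part being automatic for a Markov operator). Lemma \ref{lem:fixedpoint} then furnishes fixed probability measures $f_\delta \in BV$ with $\Vert f_\delta \Vert_{BV} \le 9\Vert \rho_\xi \Vert_{BV}$; since the noise kernel $\rho_\xi$ is held fixed along the family, this bound is uniform in $\delta$, establishing $(LR0)$.

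Next I would verify the two genuine conditions. The mixing assumption $(LR1)$ for the two chosen amplitudes $\xi_1,\xi_2$ is exactly the content of Lemma \ref{mixa}, which reads off the rigorous estimates $\Vert L_{\xi_1}^{55}|_V \Vert_{L^1} \le 0.059$ and $\Vert L_{\xi_2}^{70}|_V \Vert_{L^1} \le 0.41$ from the computer-assisted bounds of \cite{GMN}, giving exponential contraction of the zero-average space. The admissibility of the perturbation reduces, via Proposition \ref{pertmap}, to the continuity $L_T : BV([0,1]) \to BV(\chi_S)$, and this is precisely Lemma \ref{buco}: since $\chi_S$ avoids a neighborhood of $m_T$, on the relevant region $T$ is a finite union of convex monotone $C^2$ branches with derivative bounded away from zero, so $1_{\chi_S} L_T f$ is $BV$-controlled by $\Vert f \Vert_{BV}$. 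With this continuity in hand, Proposition \ref{pertmap} supplies both the smallness estimate $\Vert L_{\xi,T_\delta} f - L_{\xi,T_0} f \Vert_1 \le \delta \Vert f \Vert_1$ and the $L^1$-convergence of the difference quotient to $\dot{L} f_0 = \rho_\xi \hat{\ast}(-L_T(f_0)S)'$, which lies in $V$ because $L_T(f_0)S$ vanishes at the endpoints and the convolution estimate (\ref{conv2}) places $\rho_\xi \hat{\ast}(-L_T(f_0)S)'$ in $L^1$; this is $(LR3)$.

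With $(LR0)$--$(LR3)$ in place for the choice $\Vert \cdot \Vert_w = \Vert \cdot \Vert_1$, Corollary \ref{cormap2} applies verbatim and yields the claimed formula. The substantive input is external to this assembly: the main obstacle is the mixing $(LR1)$, which for a map with coexisting expanding and contracting regions and a critical point carrying period-three combinatorics is inaccessible by hand and is instead obtained through the rigorous numerical certification of \cite{GMN}. The only analytic subtlety left to the present argument is the handling of the critical point $m_T$, where $L_T f$ would fail to be of bounded variation; this is cleanly circumvented by requiring $\chi_S$ to stay away from $m_T$ through Lemma \ref{buco}.
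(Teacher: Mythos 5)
Your proposal is correct and follows essentially the same route as the paper: the paper's proof is precisely the assembly of Lemma \ref{mixa} (mixing, via the computer-aided bounds of \cite{GMN}), Lemma \ref{buco} (continuity of $L_{T}:BV([0,1])\rightarrow BV(\chi _{S})$ since $\chi _{S}$ avoids $m_{T}$), Corollary \ref{regcor}, Lemma \ref{lem:fixedpoint} and Proposition \ref{pertmap}, followed by an application of Corollary \ref{cormap2}. Your write-up just makes the verification of $(LR0)$--$(LR3)$ more explicit than the paper does.
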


\subsection{Small random deterministic perturbations of a system with
additive noise}

\label{detappl}

In this subsection we consider an example of a family of random systems
modeling the following situation: we consider two systems, one is
deterministic and the other is random with additive noise. At every
iteration we can randomly decide to apply one system or the other. The
random system is a random map with additive noise {\ $x_{n+1} \rightarrow
T_{1}(x_{n})+\omega_n $} where $T_{1}$ is nonsingular and {\ $\omega _{n}$
is an i.i.d random variable, as in equation \eqref{systm}}, distributed
according to a certain $BV$ kernel $\rho $ (with associated transfer
operator $L_{0}$ as described in Section \ref{sec:Main}). We apply this
system with probability $(1-\delta ).$ The deterministic system is a\emph{\ }%
nonsingular\emph{\ }map $T_{2}$ (the transfer operator associated to $T_{2}$
is denoted by $L_{T_{2}}:L^{1}\rightarrow L^{1}$) we apply this second
system with probability $\delta $ independent from the previous history of
the system. We will also suppose that $T_{2}$ is such that $%
L_{T_{2}}:BV[0,1]\rightarrow BV[0,1]$ is continuous. When $\delta $ is small
we consider this as a small random perturbation of the random system
described by $L_{0}$. The (annealed) transfer operator associated to the
randomly perturbed system can be defined by%
\begin{equation}
L_{\delta }=(1-\delta )L_{0}+\delta L_{T_{2}}.  \label{defino}
\end{equation}%
We remark that for $\delta >0$ this operator has not an absolutely
continuous kernel; however our theory can still be applied to it provided $%
L_{0}$ and its perturbation satisfy the assumptions of Theorem \ref%
{th:linearresponse}. {\ Systems of the type \eqref{defino} were already
studied in \cite[Section 10.4]{LM94}, called there \textquotedblleft
randomly applied stochastic perturbation\textquotedblright , with respect to
asymptotic stability (see the reference for the details) but not with
respect to linear response. See also the beginning of \cite[Section 10.5]{LM94} for other relevant cases of similar situations and a bit of
literature. In this case we have the following situation
\begin{proposition}
Let $L_{0}$ and $L_{T_{2}}$ as above, suppose $L_{0}$ satisties Assumption $%
(LR1)$ of Theorem \ref{th:linearresponse}\footnote{%
A nontrivial example of such a system can be the BZ system considered in
Secion \ref{sec:final}.}. Let us consider the perturbation $L_{\delta }$ of $%
L_{0}$ as defined in $(\ref{defino})$. Suppose $L_{\delta }f_{\delta
}=f_{\delta }$ for $\delta \in \left[ 0,\overline{\delta }\right) $. Then
the following response formula holds%
\begin{equation*}
\lim_{\delta \rightarrow 0}\left\Vert \frac{f_{\delta }-f_{0}}{\delta }%
-R(1,L_{0})(L_{T_{2}}f_{0}-f_{0})\right\Vert _{1}=0.
\end{equation*}
\end{proposition}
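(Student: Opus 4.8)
The plan is to check that the family $L_\delta=(1-\delta)L_0+\delta L_{T_2}$ fits the hypotheses (LR0)--(LR3) of Theorem \ref{th:linearresponse}, taking the weak norm to be $\|\cdot\|_w=\|\cdot\|_1$ (so that $B_w=L^1$ and $V_w=V$), and then to read off the conclusion. Two of the hypotheses are immediate. Assumption (LR1) is exactly the standing hypothesis on $L_0$. Assumption (LR2) holds because $L_0$ is the transfer operator of a system with additive noise whose deterministic part is the nonsingular map $T_1$: Corollary \ref{regcor} gives the continuity $L_0:L^1\to BV$, while $L_0:L^1\to L^1$ is continuous since $L_0$ is Markov.

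Assumption (LR3) reduces to an algebraic identity. Since $L_\delta-L_0=\delta(L_{T_2}-L_0)$ and both $L_0$ and $L_{T_2}$ are Markov operators (hence $L^1$-weak contractions preserving the integral), for every $f$ we have $\|(L_0-L_\delta)f\|_1=\delta\|(L_{T_2}-L_0)f\|_1\le 2\delta\|f\|_1$. This yields both smallness bounds, $\|L_0-L_\delta\|_{L^1\to L^1}\le 2\delta$ and $\|L_0-L_\delta\|_{BV\to V}\le 2\delta$, the latter using $\|f\|_1\le\|f\|_{BV}$ and the fact that $(L_{T_2}-L_0)f$ always has zero average. Moreover no limit is needed for the derivative: $\tfrac{1}{\delta}(L_\delta-L_0)f_0=(L_{T_2}-L_0)f_0=L_{T_2}f_0-f_0$ identically, using $L_0f_0=f_0$. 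Thus the derivative operator of \eqref{ldot1} is $\dot L=L_{T_2}-L_0$, and $\dot Lf_0=L_{T_2}f_0-f_0$, which lies in $V$ (indeed in $BV$) because it has zero integral.

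The substance of the argument, and the step I expect to be the main obstacle, is assumption (LR0): the existence of fixed probability densities $f_\delta\in BV$ whose variation is bounded uniformly in $\delta$. This cannot be obtained from a uniform regularization estimate as in Lemma \ref{lem:fixedpoint}, because for $\delta>0$ the operator $L_\delta$ is no longer regularizing: the term $\delta L_{T_2}$ only maps $BV$ into $BV$ and does not smooth $L^1$ data into $BV$. Instead I would prove a Lasota--Yorke inequality that exploits the smallness of $\delta$. Writing $B:=\|L_0\|_{L^1\to BV}$ (finite by Corollary \ref{regcor}) and $C_2:=\|L_{T_2}\|_{BV\to BV}$, the estimates $\|L_0f\|_{BV}\le B\|f\|_1$ and $\|L_{T_2}f\|_{BV}\le C_2\|f\|_{BV}$ give
\[
\|L_\delta f\|_{BV}\le(1-\delta)\|L_0f\|_{BV}+\delta\|L_{T_2}f\|_{BV}\le B\|f\|_1+\delta C_2\|f\|_{BV}.
\]
For $\delta<1/C_2$ the $BV$-part is genuinely contracted; iterating and using that $L_\delta$ is Markov ($\|L_\delta^kf\|_1\le\|f\|_1$) gives $\|L_\delta^n f\|_{BV}\le(\delta C_2)^n\|f\|_{BV}+\tfrac{B}{1-\delta C_2}\|f\|_1$.

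Applying this to the Lebesgue measure $m$ shows the Ces\`aro averages $m_n=\tfrac1n\sum_{i=0}^{n-1}L_\delta^i m$ have variation bounded uniformly in $n$, so the Krylov--Bogoliubov argument together with Helly's selection theorem (exactly as in Lemma \ref{extuniq}) produces a fixed probability measure $f_\delta\in BV$. The uniform bound then follows from the fixed-point relation itself: since $\|f_\delta\|_1=1$ and $f_\delta=L_\delta f_\delta$, the displayed inequality gives $\|f_\delta\|_{BV}\le B+\delta C_2\|f_\delta\|_{BV}$, hence $\|f_\delta\|_{BV}\le B/(1-\delta C_2)\le 2B$ for all $\delta\le 1/(2C_2)$, establishing (LR0) with $M=2B$. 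With (LR0)--(LR3) verified, Theorem \ref{th:linearresponse} yields $\lim_{\delta\to0}\|\tfrac{f_\delta-f_0}{\delta}-R(1,L_0)\dot Lf_0\|_1=0$, and substituting $\dot Lf_0=L_{T_2}f_0-f_0$ gives the claimed formula.
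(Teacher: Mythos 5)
Your proposal is correct and follows essentially the same route as the paper: verify (LR0)--(LR3) for $L_\delta=(1-\delta)L_0+\delta L_{T_2}$ with weak norm $\|\cdot\|_1$, using the exact algebraic identity $L_\delta-L_0=\delta(L_{T_2}-L_0)$ for (LR3) and a Lasota--Yorke-type inequality $\|L_\delta f\|_{BV}\le B\|f\|_1+\delta C_2\|f\|_{BV}$ (valid for $\delta$ small) for (LR0), then invoke Theorem \ref{th:linearresponse}. Your derivation of the uniform $BV$ bound directly from the fixed-point relation $f_\delta=L_\delta f_\delta$ is a slightly cleaner packaging of the iteration the paper sketches, but the underlying argument is the same.
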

\begin{proof}
The statement is a direct application of Theorem \ref{th:linearresponse}.
Assumption $(LR1)$ is supposed to hold for $L_{0}$. As before, Assumption $%
(LR2)$ of Theorem \ref{th:linearresponse} is satisfied because of the
regularization inequalities proved in Corollary \ref{regcor}. We verify that $\|f_{\delta }\|_{BV}$ is uniformly bounded (assumption $(LR0)$ of Theorem \ref{th:linearresponse}). Here we use that $\|L_{T_{2}}\|_{BV\rightarrow BV}$
is bounded. Remark that 
\begin{equation*}
||L_{\delta }\mu ||_{BV}=||(1-\delta )L_{0}\mu ||_{BV}+||\delta L_{T_{2}}\mu
||_{BV}\leq 9||\rho ||_{BV}+\delta ||L_{T_{2}}||_{BV\rightarrow BV}||\mu
||_{BV}.
\end{equation*}%
If $\delta $ is small ehough \ $\delta ||L_{T_{2}}||_{BV\rightarrow BV}<1$
and this is a kind or Lasota Yorke inequality. Iterating it, we get that
eventually for $n$ large enough%
\begin{equation*}
||L^{n}\mu ||_{BV}\leq \frac{2||\rho ||_{BV}}{1-\delta
||L_{T_{2}}||_{BV\rightarrow BV}}
\end{equation*}%
and we can perform the same construction as in Lemma \ref{extuniq}. To
verify Assumption $(LR3)$ we consider as a weak norm the $L^{1}$ norm
itself. To verify the first part of Assumption $(LR3)$ it is sufficient to
prove that there is $K>0$ such that%
\begin{equation*}
\left\vert |L_{0}-L_{\delta }|\right\vert _{L^{1}\rightarrow L^{1}}\leq
K\delta
\end{equation*}%
for $\delta $ small enough. This is done by noticing that in our case 
\begin{eqnarray*}
||L_{0}-L_{\delta }||_{L^{1}\rightarrow L^{1}} &=&||\delta L_{T_{2}}-\delta
L_{0}||_{L^{1}\rightarrow L^{1}}=\delta
||L_{T_{2}}-L_{0}||_{L^{1}\rightarrow L^{1}} \\
&\leq &2\delta .
\end{eqnarray*}%
(the transfer operators are weak contractions in $L^{1}$). To verify the
second part of Assumption $(LR3)$ and compute a derivative operator\ we have
to show the convergence in the $L^{1}$ of the limit%
\begin{equation*}
\underset{\delta \rightarrow 0}{\lim }\frac{(L_{0}-L_{\delta })}{\delta }%
f_{0}.
\end{equation*}%
Again this is very simple because 
\begin{equation*}
\underset{\delta \rightarrow 0}{\lim }\frac{(L_{0}-L_{\delta })}{\delta }%
f_{0}=(L_{T_{2}}-L_{0})f_{0}=L_{T_{2}}f_{0}-f_{0}.
\end{equation*}%
Applying Theorem \ref{th:linearresponse} we get a linear reponse formula: 
\begin{equation}
\lim_{\delta \rightarrow 0}\left\Vert \frac{f_{\delta }-f_{0}}{\delta }%
-R(1,L_{0})(L_{T_{2}}f_{0}-f_{0})\right\Vert _{1}=0.
\end{equation}
\end{proof}
\section{(Optimal) control of the statistical properties}
\label{sec:control}
An important problem related to linear response is the control of the
statistical properties of a system: how one can perturb the system, in order
to modify its statistical properties in a prescribed way? how can one do it
optimally?\ (what is the best action to be taken in a possible set of
allowed small perturbations in order to achieve a given small modification
of the stationary measure?).

The understanding of this problem has potentially a great importance in the
applications, as it is related to questions about optimal strategies in
order to influence the behavior of a system. As an example, thinking about
climate models one could ask \textquotedblleft what is the best action to be
taken in order to reduce the average temperature?\textquotedblright\ or
similarly for other statistical properties. This type of questions is still
not much investigated in the literature. In \cite{GP} the problem was faced
and discussed in general, giving a detailed description of the solutions and
the existence of optimal ones in the case of deterministic perturbations of
expanding maps. In \cite{Kl16} a similar problem was considered for more
general systems, restricting the allowed set of perturbations to
conjugacies. In \cite{Mac} problems of this kind were investigated in
connection with the management of complex dynamical systems (networks with
many interdependent components). In \cite{ADF} several related problems,
still focusing on optimal perturbations, are investigated for Markov Chains
and applied to Ulam approximations of random dynamical systems on the
interval.

Let us start formalizing the problem more precisely: suppose we have a
system represented by its transfer operator $L_{0}$ and a family of
perturbations $L_{\delta ,\gamma },$ of magnitude $\delta $ and direction $%
\gamma \in D$ varying in a set $D$ of the allowed ``infinitesimal''
perturbations. Suppose $f_{0}$ is a stationary measure of $L_{0}$ and $%
f_{\delta ,\gamma }$ is a stationary measure of $L_{\delta ,\gamma }$

\begin{enumerate}
\item can we find a perturbation $\gamma \in D,$ leading to some wanted
direction of change of the stationary measure? (in the sense of prescribed
linear response $\mu $).

\item In case of many solutions in $D$, can we find an optimal one?

\item In case of no solutions in $D$, can we find an optimal perturbation in 
$D$, approximating as well as possible the wanted response?
\end{enumerate}

Formally the request problem $1)$ translates into the following: given $\mu
, $ find $\gamma $ such that 
\begin{equation*}
\lim_{\delta \rightarrow 0}\frac{f_{\delta ,\gamma }-f_{0}}{\delta }=\mu .
\end{equation*}%
The limit above can be considered in different topologies, in this paper we
considered the $L^{1}$ and the $W$ topology, but other topologies could be
considered, including the convergence under different observables: i.e.
suppose $\psi $ is a smooth observable with values in $\mathbb{R}^{n}$, one
may consider the problem of finding $\gamma $ for which it holds 
\begin{equation*}
\lim_{\delta \rightarrow 0}\frac{\int \psi ~df_{\delta ,\gamma }-\int \psi
~df_{0}}{\delta }=\int \psi ~d\mu
\end{equation*}%
in $\mathbb{R}^{n}$. Formalizations of problems $2)$ and $3)$ can be made
similarly, using linear response. Similar questions may consider the maximum
response in some norm or the one of a given observable in some norm (see 
\cite{ADF}).

Having obtained in the previous section handy and explicit results for the
linear response of systems with additive noise, we now consider problem $1)$
and discuss briefly its mathematical structure. As done in the previous
sections, let us allow perturbations of the deterministic part of the system
of the form $T_{\delta }=({\mathds{1}}+\delta S)\circ T$, with $S$ being $1$%
-Lipschitz and let us consider as perturbed operators $L_{\delta ,\gamma }$
the associated transfer operators $L_{\delta ,S}:=\rho _{\xi }\ast L_{({%
\mathds{1}}+\delta S)\circ T}.$ Consider the linear response formula \ found
before for these kind of perturbations: let $f_{0}$ be the stationary
measure of $L_{0,S}$ and $f_{\delta }$ some stationary measure of $L_{\delta
,S},$ then under the assumptions of Theorem \ref{th:linearresponse} and
Proposition \ref{derivop} we have the linear response formula, 
\begin{equation*}
\lim_{\delta \rightarrow 0}\frac{f_{\delta }-f_{0}}{\delta }=R(1,L_{\xi
,T_{0}})[\rho _{\xi }\hat{\ast}(-L_{T_{0}}(f_{0})S)^{\prime }]=\mu
\end{equation*}%
(with convergence in $L^{1}$). This equation in now to be solved for $S$.
Leading to 
\begin{equation*}
\rho _{\xi }\hat{\ast}(-L_{T_{0}}(f_{0})S)^{\prime }=\mu {-}L_{\xi
,T_{0}}\mu .
\end{equation*}%
Now denote by $N_{\xi }:L^{1}\rightarrow BV$ the convolution operator: $%
N_{\xi }(f)=\rho _{\xi }\hat{\ast}f$ . $N_{\xi }$ is not necessarily
injective or onto (it is not injective due to the boundary arrangements, but
in the case of the map defined at $($\ref{BZ}$)$ it is when the noise is
small enough because the image of the map is strictly contained in $[0,1]$).
Suppose $\mu {-}L_{\xi ,T_{0}}\mu $ is in the range of $N_{\xi }$ and denote
by 
\begin{equation*}
N_{\xi }^{-1}(g)=\{f\in L^{1}~s.t.N_{\xi }(f)=g\}.
\end{equation*}%
We have%
\begin{equation*}
(-L_{T}(f_{0})S)^{\prime }\in N_{\xi }^{-1}(\mu {-}L_{\xi ,T_{0}}\mu )
\end{equation*}%
leading to the explicit family of solutions to the control problem $1)$ 
\begin{equation*}
S(t)=\frac{C+\int_{0}^{t}f~dm}{-L_{T_{0}}(f_{0})}
\end{equation*}%
for every $C\in \mathbb{R}$ and $f\in N_{\xi }^{-1}(\mu {-}L_{\xi ,T_{0}}\mu
)$ when the expression makes sense. Inside this family one can search for
optimal or optimal approximating solutions, as proposed at problems $2)$ and 
$3)$ above. Further investigations of these problems are in our opinion very
interesting, but out of the scope of the present paper.

\begin{remark}
Note that, analogously to what has been done before in remark \ref{rem_GP17}%
, we can compare this result with the one in \cite{GP}. Over there the
solution to the problem is computed, given $\rho_1$ by the solution of a
differential equation which is, morally, what has been done here.
\end{remark}

\end{document}